\numberwithin{equation}{section}
\def\cb{{\mathcal B}}
\def\cd{{\mathcal D}}
\def\ch{{\mathcal H}}
\def\car{{\mathcal R}}
\def\bc{{\mathbb C}}
\def\bg{{\mathbb G}}
\def\bh{{\mathbb H}}
\def\bn{{\mathbb N}}
\def\br{{\mathbb R}}
\def\bt{{\mathbb T}}
\def\bz{{\mathbb Z}}
\def\a{\alpha}
\def\b{\beta}
\def\g{\gamma}        \def\G{\Gamma}
\def\d{\delta}        \def\D{\Delta}
\def\eps{\varepsilon}
\def\z{\zeta}
\def\th{\vartheta}
\def\l{\lambda}       \def\La{\Lambda}
\def\m{\mu}
\def\n{\nu}
\def\r{\rho}
\def\s{\sigma}        \def\S{\Sigma}
\def\t{\tau}
\def\f{\varphi} \def\F{\Phi}
\newtheorem{Thm}{Theorem}[section]
\newtheorem{Cor}[Thm]{Corollary}
\newtheorem{Prop}[Thm]{Proposition}
\newtheorem{Lemma}[Thm]{Lemma}
\theoremstyle{definition}
\newtheorem{Dfn}[Thm]{Definition}
\theoremstyle{remark}
\def\di{\mathop{\rm d}\!}
\def\rel{\mathop{\rm Re}}
\def\im{\mathop{\rm Im}}
\def\spr{\mathop{\rm spr}}
\def\tr{\mathop{\rm Tr}{}\!}
\def\idd{{\bf 1}\!\!{\rm I}}
 \def\di{\mathop{\rm d}\!}
\newcommand{\nn}{\nonumber}
\begin{document}

\title[harmonic analysis on cayley trees]
{harmonic analysis on perturbed Cayley Trees}
\author{Francesco Fidaleo}
\address{Dipartimento di Matematica,
Universit\`{a} di Roma Tor Vergata,
Via della Ricerca Scientifica 1, Roma 00133, Italy}

\email{fidaleo@mat.uniroma2.it}

\subjclass[2000]{46Lxx; 82B20; 82B10}
\keywords{Harmonic analysis on Cayley Trees, Bose Einstein condensation, Perron Frobenious theory.}

\date{\today}

\begin{abstract}
We study some spectral properties of  the adjacency operator of non
homogeneous networks. The graphs under investigation are obtained by
adding density zero perturbations to the homogeneous Cayley Trees.
Apart from the natural mathematical meaning, such spectral
properties are relevant for the Bose Einstein Condensation for the
pure hopping model describing arrays of Josephson junctions on non
homogeneous networks. The resulting topological model is described
by a one particle Hamiltonian which is, up to an additive constant, the
opposite of the adjacency operator on the graph. It is known that
the Bose Einstein condensation already occurs for unperturbed
homogeneous Cayley Trees. However, the particles condensate on the
perturbed graph, even in the configuration space due to
nonhomogeneity. Even if the graphs under consideration are
exponentially growing, we show that it is enough to perturb in a
negligible way the original graph in order to obtain a new network
whose mathematical and physical properties
dramatically change. Among the results proved in the present paper,
we mention the following ones. The appearance of the {\it Hidden
Spectrum} near the zero of the Hamiltonian, or equivalently below
the norm of the adjacency. The latter is related with the value of
the critical density and then with the appearance of the
condensation phenomena. The investigation of the {\it recurrence/transience
character} of the adjacency, which is connected to the possibility
to construct locally normal states exhibiting the Bose Einstein
condensation. Finally, the study of the {\it volume growth of the
wave function} of the ground state of the Hamiltonian, which is
nothing but the generalized Perron Frobenius eigenvector of the
adjacency. This Perron Frobenius weight describes the spatial
distribution of the condensate and its shape is connected with the
possibility to construct locally normal states exhibiting the Bose
Einstein condensation at a fixed density greater than the critical
one. 
\end{abstract}

\maketitle

\centerline{DEDICATO A BERTA}

\section{introduction}

The present paper is devoted to the analysis of the mathematical properties of non homogeneous
networks obtained by adding density zero perturbations to homogeneous Cayley Trees, the latter being the Cayley graphs of free (products of) groups, see e.g. Fig. \ref{Figc} and Fig. \ref{Fig8}. As explained in the previous paper \cite{FGI1}, such mathematical properties are deeply connected with the Bose Einstein condensation (BEC for short)
of Bardeen Cooper pairs in networks describing arrays of Josephson junctions (see e.g. Section 62 of \cite{LL}, and \cite{BC}). The formal Hamiltonian describing such arrays of Josephson junctions is the quartic Bose Hubbard Hamiltonian, given on a generic network $G$ by
\begin{equation}
\label{boha}
H_{BH}=m\sum_{i\in VG}n_i+\sum_{i,j\in VG}A_{ij}\big(Vn_in_j-J_0a^{\dagger}_ia_j\big)\,.
\end{equation}
Here, $VG$ denotes the set of the vertices of the network $G$, $a^{\dagger}_i$ is the Bosonic creator, and $n_i=a^{\dagger}_ia_i$ the number operator
on the site $i\in VG$ (cf. \cite{BR2}). Finally, $A$ is the adjacency operator whose matrix element
$A_{ij}$ in the place $ij$ is the number of the edges connecting the site $i$ with the site $j$ (in particular it is Hermitian). It was argued in \cite{BCRSV} that, in the case when $m$ and $V$ are negligible with respect to $J_0$, the hopping term dominates the physics of the system. Thus, under this approximation, \eqref{boha} becomes the {\it pure hopping Hamiltonian} given by
 \begin{equation}
\label{boha1}
 H_{PH}=-J\sum_{i,j\in VG}A_{ij}a^{\dagger}_ia_j\,,
\end{equation}
 where the constant $J>0$ is a mean field coupling constant which might be different from the $J_0$ appearing in the more realistic Hamiltonian \eqref{boha}.\footnote{It is of course a very interesting problem to provide a theoretical estimate of the coupling constant $J$ appearing in the pure hopping Hamiltonian. However, it might be reasonable to accept the idea that, at very low temperature when the thermal agitation plays a negligible role, the pure hopping term dominates the remaining ones in \eqref{boha}.} Recently, in some crucial experiments (cf. \cite{SRC}), it was found an enhanced current at low temperatures for non homogeneous arrays of Josephson junctions, which might be explained via the Bose Einstein condensation. On the other hand, it was showed in Theorem 7.6 of \cite{FGI1}, that for free models (i.e.
when $V=0$ in \eqref{boha}), the condensation phenomena can occur after adding a negligible number of edges, only if the Hamiltonian is pure hopping.

It is well known (cf. \cite{BR2}, Section 5.2) that most of the physical properties of the quadratic multi particle Hamiltonian \eqref{boha1} are encoded into the spectral properties of the one particle Hamiltonian
\begin{equation}
\label{boha3}
 h=-JA\,,
 \end{equation}
naturally acting on $\ell^2(VG)$.

In light of the previous considerations, it is natural to address the investigation of the
pure hopping mathematical model described by the Hamiltonian obtained by putting $J=1$
 in \eqref{boha3}, and normalizing to ensure the positivity of the energy. The resulting one particle Hamiltonian for the purely topological model under consideration is then
 \begin{equation}
\label{boha2}
 H=\|A\|\idd-A\,,
 \end{equation}
 where $A$ is the adjacency of the fixed graph $G$, acting on the Hilbert space
 $\ell^2(VG)$.

One of the first mathematical attempts to investigate the BEC on non homogeneous amenable graphs, such as the Comb graphs, was made in \cite{BCRSV}. In that paper, it was pointed out that there appears an {\it hidden spectrum}, which is responsible for the finiteness of the critical density. In addition, the behavior of the wave function of the ground state, describing the spatial density of the condensate, was also computed. Some spectral properties of the Comb and the Star graph (cf. Fig. \ref{Figa}) were investigated in
\cite{ABO} in connection with the various  notions of independence in Quantum Probability. In that paper, it was noticed the possible connection between such spectral properties and the BEC.
 \begin{figure}[ht]
     \centering
     \psfig{file=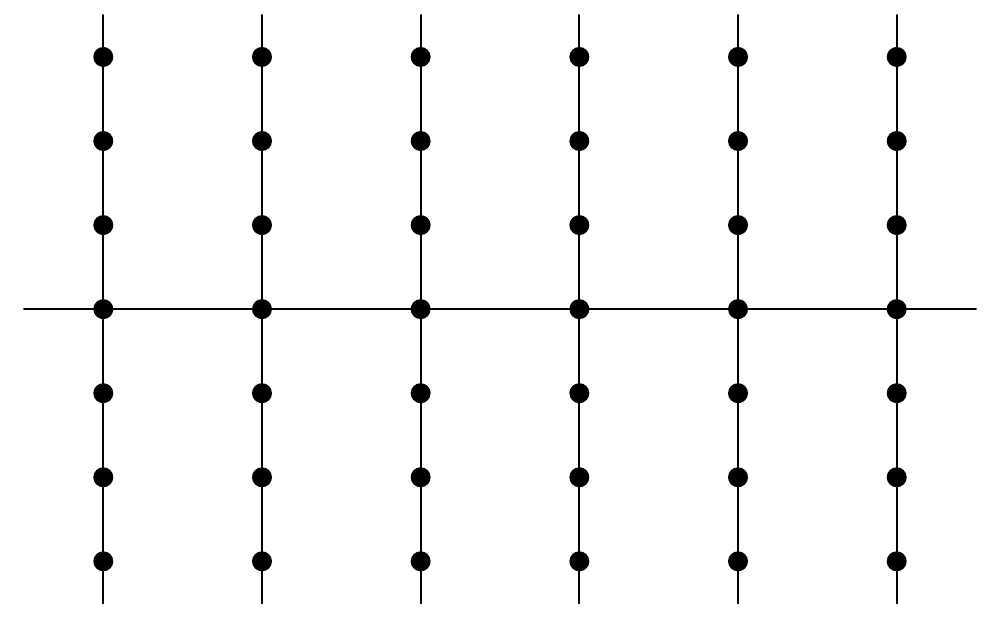,height=1.2in} \qquad \qquad
     \psfig{file=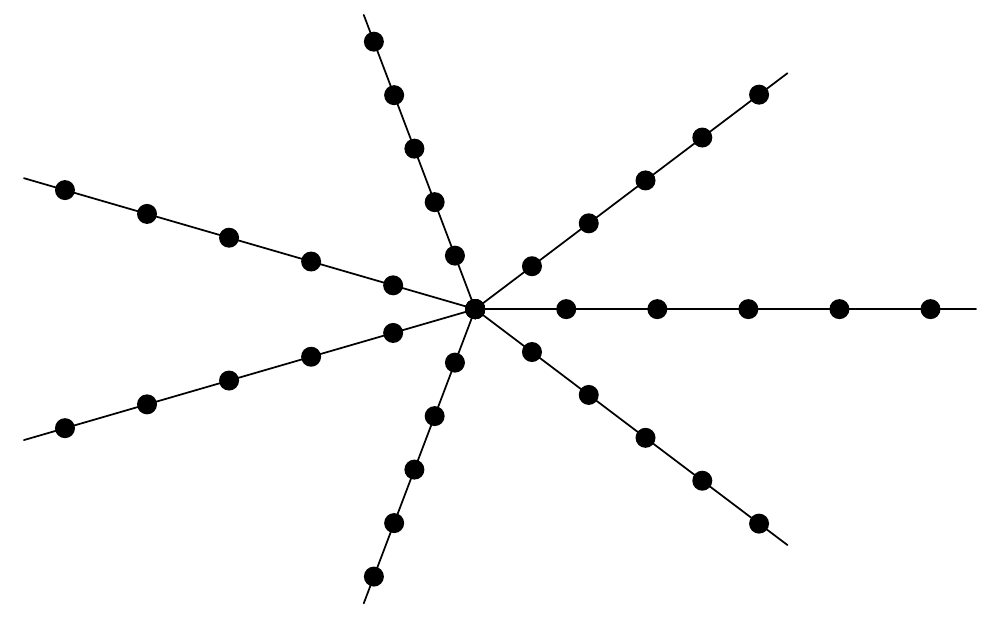,height=1.2in}
     \caption{Comb and Star Graphs.}
     \label{Figa}
     \end{figure}

The systematic investigation of the BEC for the pure hopping model
on a wide class of amenable networks obtained by negligible perturbations of periodic graphs, has been started in \cite{FGI1}. The emerging results are quite surprising. First of all, the appearance of the hidden spectrum was proven for most of the graphs under consideration. This is due to the combination of two opposite phenomena arising from the perturbation. If the perturbation is sufficiently large (in many cases it is enough a finite perturbation), the norm
$\|A_p\|$ of the adjacency of the perturbed graph becomes larger than the analogous one $\|A\|$ of the unperturbed adjacency. On the other hand, as the perturbation is sufficiently small (i.e. zero--density), the part of the spectrum
$\s(A_p)$ in the segment $(\|A\|, \|A_p\|]$ does not contribute to the density of the
states.\footnote{Due to the standard normalization chosen in the present paper, the integrated density of the states describing the density of the eigenvalues, is a cumulative function $F$ whose support is included in the closed line
$\overline{\br_+}$. See Section \ref{giem} below, and the reference cited therein.}
This allows us to compute the critical density $\r_c(\b)$ at the inverse temperature $\b$ for the perturbed model by using the integrated density of the states $F$ of the unperturbed one,
 \begin{equation}
\label{cdens}
\r_c(\b)=\int\frac{\di F(x)}{e^{\b\left(x+(\|A_p\|-\|A\|)\right)}-1}\,.
\end{equation}
The resulting effect of the perturbed model exhibiting the hidden spectrum (i.e. when $\|A_p\|-\|A\|>0$) is that the critical density is always finite.\footnote{Compare with the Lifschitz tails in randomly perturbed Hamiltonians, see e.g. \cite{JPZ, LZ}.}

Another relevant fact connected with the introduction of the perturbation, and thus to the non homogeneity, is the possible change of the transience/recurrence character (cf. \cite{S}, Section 6) of the adjacency operator. It has to do with the possibility to construct locally normal states exhibiting BEC.\footnote{For the possible applications to Probability Theory of the
transience character of an infinite matrix with non negative entries, the reader is referred to
\cite{S}.} As explained in \cite{FGI1}, the last relevant fact is the investigation of the shape of wave function of the ground state of the model, describing the spatial distribution of the condensate on the network in the ground state of the Hamiltonian. From the mathematical viewpoint, this is nothing but the Perron Frobenius generalized eigenvector of the adjacency (cf. \cite{PiWo, S}).

It appears clear that the physical and the mathematical aspects of the topological model based on the pure hopping Hamiltonian \eqref{boha2} are strongly related. This can be understood also in the following simple way. For Bosonic models, described by the Canonical Commutation Relations (cf. \cite{BR2}), most of the physical relevant quantities are computed by the functional calculus of suitable functions of the one particle Hamiltonian. The critical density  \eqref{cdens} is one of them. But, the asymptotic behavior of the Hamiltonian \eqref{boha2} near zero corresponds to the asymptotics of the spectrum of $A$ close to $\|A\|$. Indeed, by the Taylor expansion, we heuristically get for the function appearing in the Bose Gibbs occupation number (cf. \cite{LL}, Section 54) at small energies, for the chemical potential $\m<0$,
$$
\frac1{e^{H-\m\idd}-1}\approx (H-\m\idd)^{-1}=((\|A\|-\m)\idd-A)^{-1}
\equiv R_A(\|A\|-\m)\,.
$$
Then the study of the BEC is reduced to the investigation of the spectral properties of the resolvent
$R_A(\l)$, for $\l\approx\|A\|$.
\begin{figure}[ht]
     \centering
     \psfig{file=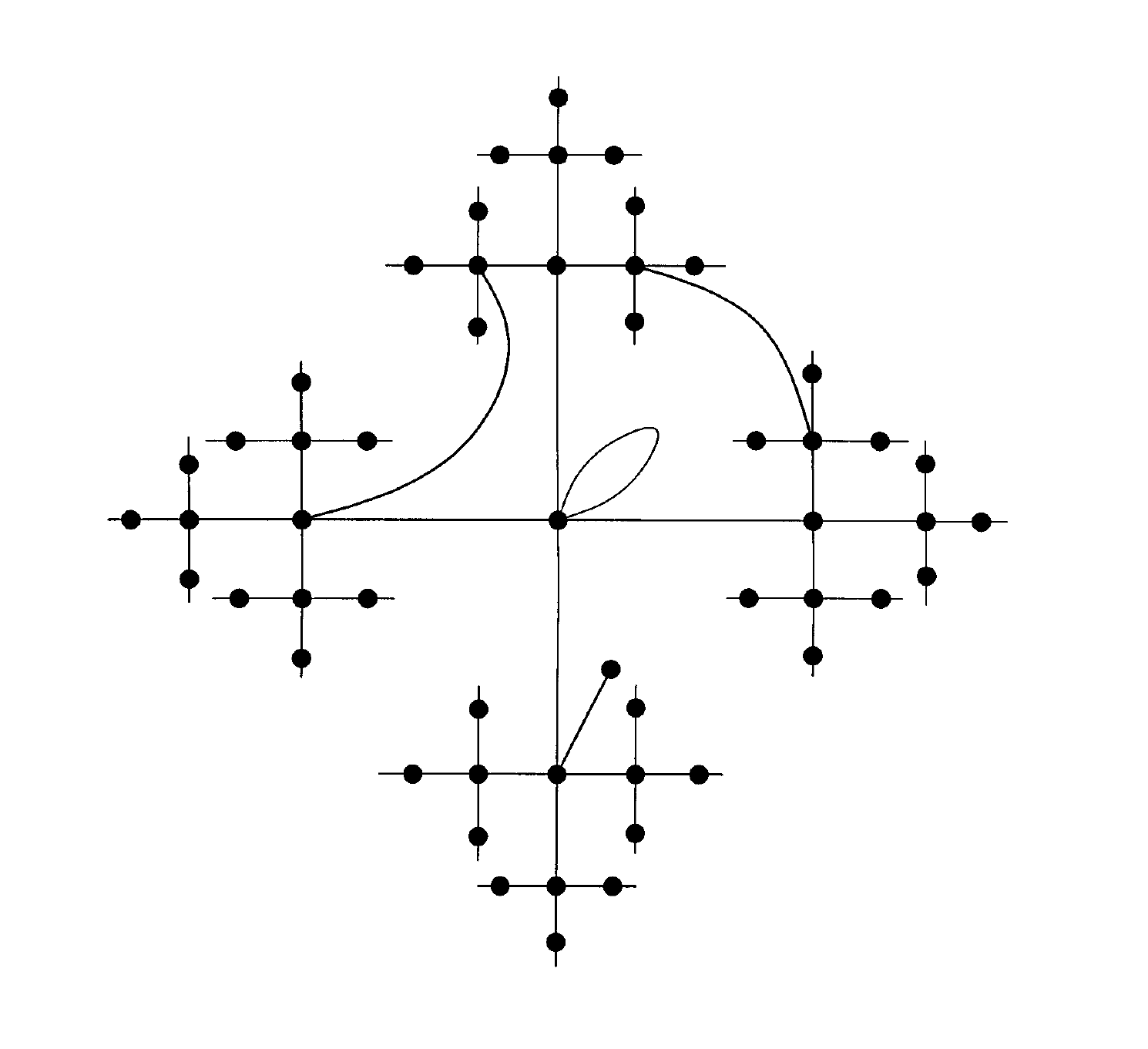,height=2.5in}
     \caption{Finite additive perturbation of the Cayley Tree of degree 4.}
     \label{Figb}
     \end{figure}

The networks under consideration in the present paper are density zero additive perturbations of
exponentially growing graphs made of homogeneous Cayley Trees, see Fig. \ref{Figb}. We restrict our analysis to the mathematical aspects explained below.
Among the models treated in the present paper, we mention the perturbations $\bg^{Q,q}$,
$2\leq q<Q$, and $\bh^Q$, of the homogeneous Cayley Tree $\bg^{Q}$ along a subtree isomorphic to $\bg^{q}$, and $\bn$ respectively, see below. For these situations, we are able to write down and solve the secular equation. Thus, we can determine the $q$, $Q$ for which
$\bg^{Q,q}$ admits the hidden spectrum. In addition, we provide a useful formula for the resolvent of $A_{\bg^{Q,q}}$. Thus, we can write down the Perron Frobenious eigenvector obtained as the infinite volume limit of the finite volume Perron Frobenius eigenvectors (normalized to $1$ at a fixed root), and finally determine whether the perturbed graph is recurrent or transient.

A result which is in accordance with the intuition (i.e. suggested by the shape of the Perron Frobenius vector), and with the previous ones described in \cite{FGI1}, is that the 
recurrence/transience character of
$\bg^{Q,q}$ and $\bh^Q$, is determined by that of the base point of the perturbation. Namely,
$\bg^{Q,2}$ is recurrent as
$\bg^{2}\sim\bz$. The network $\bh^Q$ is transient as the base point of its perturbation, which is isomorphic to $\bn$ (cf. \cite{FGI1}, Proposition 8.2). Finally, if $q>2$,
$\bg^{Q,q}$ is transient as well, being $\bg^{q}$ transient when $q>2$.

As previously explained, all the results listed below have relevant physical applications to the BEC. We postpone the detailed investigation of such applications to the forthcoming paper
\cite{F}.

\section{preliminaries}
\label{giem}

 In the present paper, a {\it graph} (called also a {\it network}) $X=(VX,EX)$ is a collection $VX$ of objects,
 called {\it vertices}, and a collection $EX$ of unordered lines connecting vertices, 
 called {\it edges}. Denote $E_{xy}$ the collection of all the edges connecting $x$ with $y$. As the
 edges are unordered, $E_{xy}=E_{yx}$. Two vertices $x$, $y$ are said to be
 {\it adjacent} if there exists an edge $e_{xy}\in E_{xy}$ joining $x$, $y$. In this situation, we write 
 $x\sim y$.
 
Let us denote by $A=[A_{xy}]_{x,y\in X}$, $x,y\in VX$, the {\it adjacency
 matrix} of $X$, that is,
 $$
 A_{xy}=|E_{xy}|\,.
 $$ 
Notice that all the geometric properties of $X$ can be
 expressed in terms of $A$.  For example, a graph is connected, that is
 any two different vertices are joined by a path, if and only if  $A$ is irreducible. In addition,
 the {\it degree} $\deg(x)$ of a
 vertex $x$, that is the number of the incoming (or equivalently outcoming) edges of $x$ is
 $\langle A^*A\d_x,\d_x\rangle$. Setting
 $$
 \deg:=\sup_{x\in VX} \deg(x)\,,$$
 we have
 $\sqrt{\deg}\leq\|A\|\leq \deg$, that is $A$ is bounded if and only if $X$
 has uniformly bounded degree.  We denote by $D=[D_{xy}]_{x,y\in X}$ the {\it degree matrix} of
 $X$, that is,
$$
D_{xy}:=\deg(x)\d_{x,y}.
$$
The {\it Laplacian} on the graph is $\D=A-D$. The definition used here implies $\D<0$, and is the standard one adopted in the physical literature.

In the present paper, all the graphs are connected, countable and with uniformly bounded
degree. In addition, we deal only with bounded operators acting on $\ell^2(VX)$ if it is not otherwise specified.

Let $B$ be a closed operator acting on
$\ell^2(VX)$, and $\l\in{\rm P(B)}\subset\bc$ the resolvent set of $B$. As usual,
$$
R_B(\l):=(\l\idd-B)^{-1}
$$
denotes the {\it resolvent} of $B$.

Fix a bounded matrix with positive entries $B$ acting on $\ell^2(VX)$. Such an operator is called {\it positive preserving} as it preserves the elements of $\ell^2(VX)$ with positive entries.
A sequence $\{v(x)\}_{x\in VX}$ is called a {\it (generalized) Perron Frobenius eigenvector} if it has positive entries and
 $$
 \sum_{y\in VX}B_{xy}v(y)=\|B\|v(x)\,,\quad x\in VX\,.
 $$
Suppose for simplicity that $B$ is selfadjoint. It is said to be {\it recurrent} if
\begin{equation}
\label{caz}
\lim_{\l\downarrow\|B\|}\langle R_B(\l)\d_x,\d_x\rangle=+\infty\,.
\end{equation}
otherwise $B$ is said to be {\it transient}.
It is shown in \cite{S}, Section 6, that the recurrence/transience character of $B$ does not depend on the base point chosen for computing the limit in \eqref{caz}.

The Perron Frobenius eigenvector is unique up to a multiplicative constant, if $X$ is finite or when $B$ is recurrent, see e.g. \cite{S}. It is unique also for the adjacency on the tree like networks (cf. \cite{PiWo}). In general, it is not unique, see e.g. \cite{FGI2} for the cases relative to the Comb graphs.
\begin{figure}[ht]
     \centering
     \psfig{file=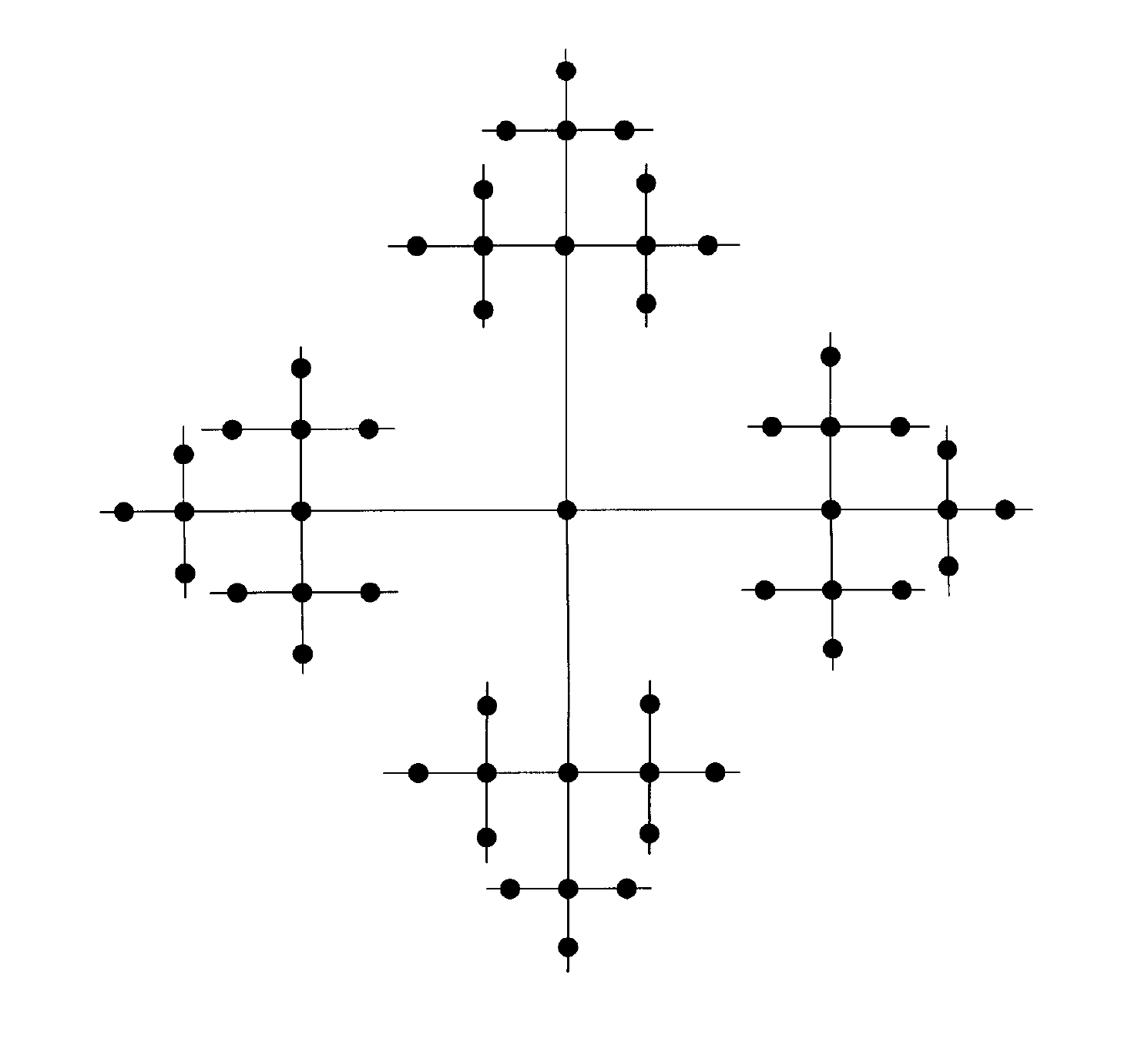,height=2.5in}
     \caption{The Cayley Tree of degree 4.}
     \label{Figc}
     \end{figure}

We say that an operator $B$ acting on $\ell^2(VX)$ has {\it finite
 propagation} if there exists a constant $r=r(A)>0$ such that, for any
 $x\in X$, the support of $Av$ is contained in the (closed) ball
 $B(x,r)$ centered in $x$ and with radius $r$. It is easy to show that if $A$ is the adjacency operator on $X$, then $A^k$ has propagation $k$ for any integer $k\geq0$.

The graphs we deal with in our analysis are (additive, negligible) perturbations of homogeneous Cayley Trees if it is not otherwise specified. The reader is referred to \cite{Z} for the definitions and the main properties concerning the Cayley Trees.

Let $X$ be any Cayley Tree of degree $q$, see Fig. \ref{Figc}. Fix a root $0\in X$ and consider the ball $X_n$ including all the vertices at distance less than or equal to $n$ from
$0$, see Fig \ref{Fig5}. We denote by $d$ the canonical distance on $X$, where $d(x,y)$ is the number of the edges of the minimal path connecting $x$ with $y$.
\begin{figure}[ht]
     \centering
     \psfig{file=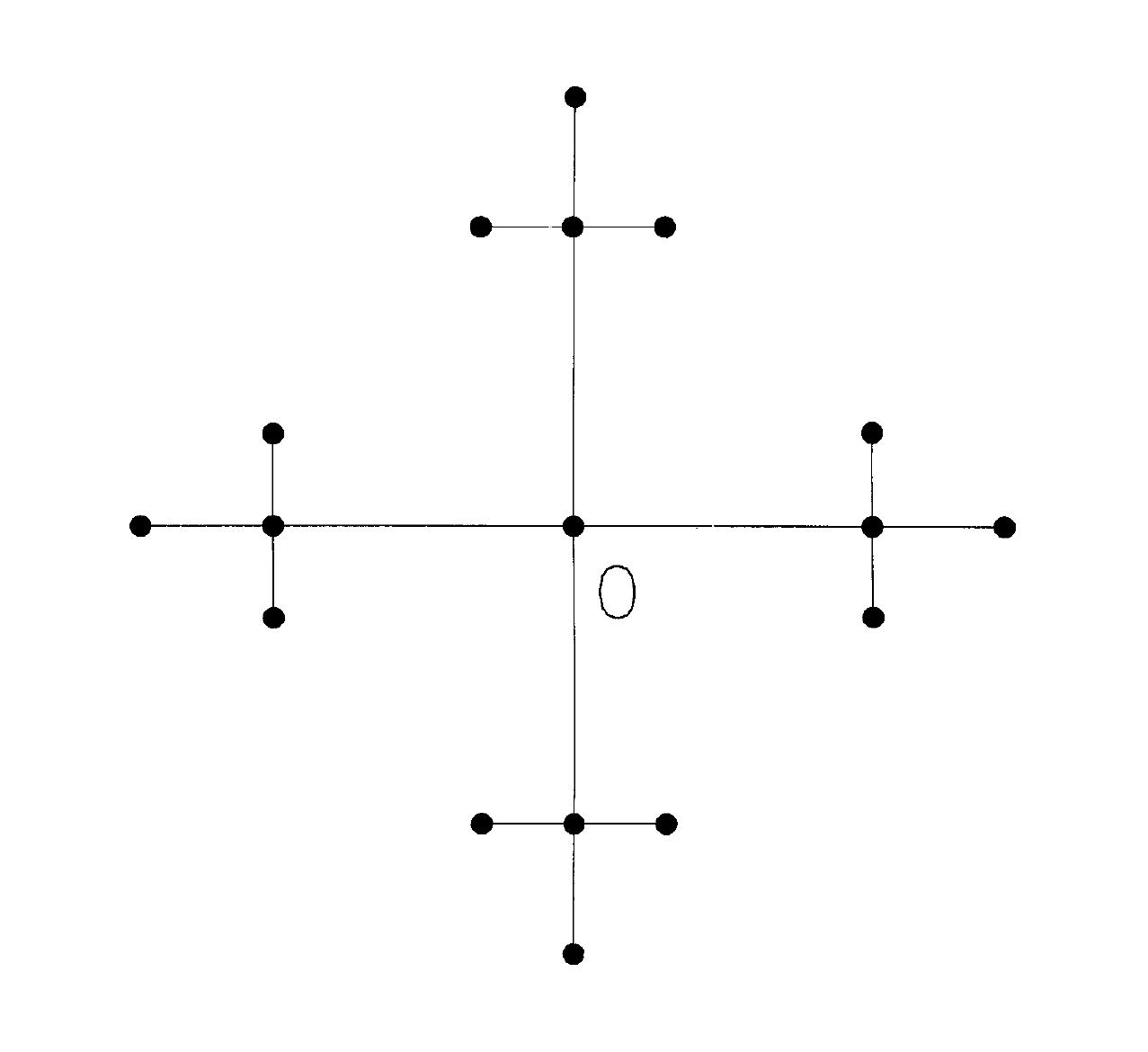,height=2in}
     \caption{The ball of radius 2 in the Cayley Tree of degree 4.}
     \label{Fig5}
     \end{figure}
Let $A_{X_n}$, $A_X$ be the adjacency matrices of the corresponding graphs. The formers are nothing but the restriction of the latter to the graphs $X_n$:
$$
A_{X_n}=P_nA_XP_n\lceil_{\ell^2(VX_n)}
$$
where $P_n$ is the orthogonal projection onto $\ell^2(VX_n)$.

One of the most useful objects for infinite systems like those considered in the present paper is the so called integrated density of the states. We start with the following definition.
Consider on $\cb(\ell^2(VX))$ the state
$$
\t_n:=\frac1{|VX_n|}\tr_n(P_n\,{\bf\cdot}\,P_n)\,,
$$
$P_n$ being the selfadjoint projection onto $\ell^2(VX_n)$. Define for a bounded operator $B$,
\begin{equation}
\label{3434}
\t(B):=\lim_n\t_n(B)\,,\quad B\in\cd_\t\,,
\end{equation}
where the domain $\cd_\t$ is precisely the linear submanifold of $\cb(\ell^2(VX))$ for which
the limit in \eqref{3434} exists.
Let $B\in\cb(\ell^2(X))$ be a bounded selfadjoint operator. We suppose for simplicity that $B$ is positive and $\min\s(B)=0$.
Suppose in addition that $\{f(B)\mid f\in C(\br)\}\subset\cd_\t$.
Then $\m_B(f):=\t(f(B))$ defines a positive normalized functional on $C(\s(B))$, and then a probability measure on the (positive) real line by the Riesz Markov Theorem. Thus, there exists a unique increasing right continuous function $x\in\br\mapsto N_B(x)\in\br$ satisfying
$$
N_B(x)=0\,,\,\,x<0\,,\quad N_B(x)=1\,,\,\,x\geq\|B\|\,,
$$
such that
$$
\m_B(f)=\int f(x)\di N_B(x)\,,
$$
where the last integral is  a Lebesgue Stieltjes integral, see
\cite{Ro}, Section 12.3. Such a cumulative function $N_B$ is called
the {\it integrated density of the states} of $B$, see e.g.
\cite{PF}.

Let $B\in\cb(\ell^2(VX))$ be a selfajoint operator with $\min\s(B)=0$, such that
$\{f(B)\mid f\in C(\br)\}\subset\cd_\t$. Let
$N_B$ its integrated density of the states.
\begin{Dfn}
We say that $B$ exhibits {\it hidden spectrum} if there exist $x_0>0$ such that
$N_B(x)=0$ for each $x<x_0$.
\end{Dfn}
Notice that, if $B$ exhibits hidden spectrum, then the part of the spectrum
$$
\emptyset\neq\s(B)\bigcap[0,x_0)\,,
$$
does not contribute to the integrated density of the states.

Consider the integrated density of the states
$F:=N_{\|A_X\|\idd-A_X}$ of $\|A_X\|\idd-A_X$. This cumulative
function exists, and is the pointwise limit of the densities of the
eigenvectors of the finite volume operators
$\|A_X\|\idd-A_{X_n}$,that is the finite volume density of the
states (up to an additive constant going to zero as $n\to+\infty$),
except on at most a countable set. Indeed, for the inverse
temperature $\b>0$, let
\begin{equation}
\label{4}
\F_n(\b):=\frac1{|VX_n|}\tr_n(e^{-\b(\|A_X\|\idd-A_{X_n})})
\end{equation}
be the one particle finite volume partition function.\footnote{In
the physical language, ($\Phi_n$) $\Phi$ is called the (finite
volume) Gibbs partition function of the model at the inverse
temperature $\b$.} It is nothing but the Laplace transform of the
density of the states of $\|A_X\|\idd-A_{X_n}$. As shown in
\cite{BDP}, it converges pointwise to
\begin{equation}
\label{3}
\F(\b):=\frac{(q-2)^2}{q-1}\sum_{k=1}^{+\infty}\sum_{n=1}^{k}
(q-1)^{-k}e^{-4\b\sqrt{q-1}\sin^2\frac{n\pi}{2(k+1)}}\,,
\end{equation}
as $X_n\uparrow X$.
\begin{Prop}
\label{1}
The $\F(\b)$ in \eqref{3} is the Laplace transform of a cumulative function $F$ of a probability measure on the real line whose support is contained in the interval $[0, 4 \sqrt{q-1}]$.
\end{Prop}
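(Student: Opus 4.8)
The plan is to exhibit $\F(\b)$ directly as the Laplace transform of an explicit atomic probability measure, after which the existence of the cumulative function $F$ with the stated support follows at once. Writing $x_{n,k}:=4\sqrt{q-1}\,\sin^2\frac{n\pi}{2(k+1)}$ and $c_{n,k}:=\frac{(q-2)^2}{q-1}(q-1)^{-k}$, I would introduce the discrete measure
$$
\mu:=\sum_{k=1}^{+\infty}\sum_{n=1}^{k}c_{n,k}\,\delta_{x_{n,k}}
$$
on $\br$, where $\delta_{x}$ denotes the unit point mass at $x$. Since $q>2$, all the weights $c_{n,k}$ are strictly positive, so $\mu$ is a positive measure, and by construction $\int e^{-\b x}\,\di\mu(x)=\F(\b)$ term by term (several pairs $(n,k)$ may give the same value $x_{n,k}$, but this merely merges atoms and is harmless). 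For $1\le n\le k$ the argument $\frac{n\pi}{2(k+1)}$ lies strictly between $0$ and $\frac\pi2$, hence $\sin^2\frac{n\pi}{2(k+1)}\in(0,1)$ and every $x_{n,k}$ lies in $(0,4\sqrt{q-1})$, so $\supp\mu\subset[0,4\sqrt{q-1}]$. Note this is exactly $\s(\|A_X\|\idd-A_X)$, since $\|A_X\|=2\sqrt{q-1}$ and $\s(A_X)\subset[-2\sqrt{q-1},2\sqrt{q-1}]$, which is the sanity check one expects.

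The key computation is the normalization $\mu(\br)=1$. Summing the weights gives
$$
\mu(\br)=\frac{(q-2)^2}{q-1}\sum_{k=1}^{+\infty}\Big(\sum_{n=1}^{k}1\Big)(q-1)^{-k}
=\frac{(q-2)^2}{q-1}\sum_{k=1}^{+\infty}k\,(q-1)^{-k}\,.
$$
Because $q>2$ gives $(q-1)^{-1}<1$, the series converges, and the elementary identity $\sum_{k\ge1}k\,t^{k}=t/(1-t)^2$ with $t=(q-1)^{-1}$ yields $\sum_{k\ge1}k\,(q-1)^{-k}=(q-1)/(q-2)^2$. Thus the prefactor cancels exactly and $\mu(\br)=1$, so $\mu$ is a probability measure supported in $[0,4\sqrt{q-1}]$.

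Finally I would set $F(x):=\mu\big((-\infty,x]\big)$; then $F$ is increasing and right continuous with $F(x)=0$ for $x<0$ and $F(x)=1$ for $x\ge 4\sqrt{q-1}$, i.e. it is the cumulative function of a probability measure supported in $[0,4\sqrt{q-1}]$, and $\F(\b)=\int e^{-\b x}\,\di F(x)$ as a Lebesgue--Stieltjes integral. The only point needing a word of care is the legitimacy of reading off the transform term by term: as all summands are nonnegative for $\b\ge0$, Tonelli's theorem lets the double sum be reorganized freely into $\int e^{-\b x}\,\di\mu(x)$, and since the atoms are confined to the bounded set $[0,4\sqrt{q-1}]$ with $\mu(\br)=1$, the transform in fact extends to an entire function of $\b$, so no convergence obstruction survives. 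In effect this is the whole difficulty: the expression \eqref{3} is already written in ``spectral'' form $\sum c_{n,k}e^{-\b x_{n,k}}$, and the real content is the positivity of the weights together with the series identity giving total mass one.
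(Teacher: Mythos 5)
Your proof is correct, but it takes a more direct route than the paper. The paper's proof is a one-line appeal to the continuity theorem for Laplace transforms (Theorem XIII.1.2 of Feller): $\F$ is the pointwise limit of the transforms $\F_n$ of the finite-volume densities of states, which are probability measures, so the limit is automatically the transform of a possibly defective distribution; properness is then checked by "interchanging the summation with the limit $\b\downarrow 0$", i.e.\ by verifying $\lim_{\b\downarrow 0}\F(\b)=1$. You instead ignore the approximation structure entirely and read \eqref{3} off as the transform of an explicit atomic measure $\sum c_{n,k}\d_{x_{n,k}}$ with atoms in $(0,4\sqrt{q-1})$, reducing everything to positivity of the weights plus the series identity $\sum_{k\ge1}k(q-1)^{-k}=(q-1)/(q-2)^2$. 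Note that this identity is exactly the same computation the paper hides inside "interchanging the summation with the limit", so the arithmetic core is shared; what differs is the packaging. Your version is more elementary and self-contained (no continuity theorem needed, and you get for free that $\F$ extends to an entire function), while the paper's version keeps the connection to the finite-volume approximants explicit, which is what is actually exploited immediately afterwards in Proposition \ref{a} via the Helly theorem. Both arguments require $q>2$ for the weights to be positive and the mass to be finite, which you correctly flag and the paper leaves implicit.
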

\begin{proof}
The proof directly follows from Theorem XIII.1.2 of \cite{Fe} by
interchanging the summation with the limit $\b\downarrow 0$.
\end{proof}
Now we show that the cumulative function $F$ is nothing but the integrated density of the states of $\|A_X\|-A_X$.
\begin{Prop}
\label{a}
$\{f(A_X)\mid f\in C(\br)\}\subset\cd_\t$ and
\begin{equation*}
 \t(f(A_X))=\int f(\|A_X\|-x)\di F(x)\,,
\end{equation*}
where the Laplace transform of $F$ is the function given in \eqref{3}.
\end{Prop}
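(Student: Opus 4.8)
The plan is to identify two probability measures by testing them against a determining family of functions. Concretely, once the functional $f\mapsto\t(f(A_X))$ is shown to be defined, I want to prove that it coincides with $f\mapsto\int f(\|A_X\|-x)\,\di F(x)$, i.e. that the spectral data of $A_X$ seen through $\t$ is the push--forward of $F$ under $x\mapsto\|A_X\|-x$. Since $A_X$ is bounded, $\s(A_X)\subset[-\|A_X\|,\|A_X\|]$ is compact, so it suffices to match the two functionals on the exponentials $f_\b(y):=e^{-\b(\|A_X\|-y)}$, $\b\ge0$. Indeed the linear span of $\{f_\b\}_{\b\ge0}$ is a subalgebra of $C(\s(A_X))$ (it is closed under products because $f_{\b_1}f_{\b_2}$ is again of this form up to a constant), it contains the constants $(\b=0)$ and separates points, so by Stone--Weierstrass it is dense; both candidate functionals are positive and normalized, hence continuous of norm $1$, and so agreement on a dense subalgebra will force agreement on all of $C(\br)$.

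First I would treat the generators $f_\b$. By definition $\int f_\b(\|A_X\|-x)\,\di F(x)=\int e^{-\b x}\,\di F(x)=\F(\b)$, the Laplace transform of $F$ appearing in \eqref{3}, which by \cite{BDP} is the pointwise limit of the finite--volume partition functions $\F_n(\b)=\frac1{|VX_n|}\tr_n\big(e^{-\b(\|A_X\|\idd-A_{X_n})}\big)$ of \eqref{4}. Thus the whole statement reduces to the identity $\t(f_\b(A_X))=\lim_n\F_n(\b)$, that is, to showing that $\lim_n\t_n(f_\b(A_X))$ exists and coincides with $\lim_n\frac1{|VX_n|}\tr_n\big(f_\b(A_{X_n})\big)$. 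Granting this, Proposition \ref{1} furnishes the measure $F$ with the required support, and the reduction of the first paragraph concludes.

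The main obstacle is precisely this last identification, because $\t_n(f_\b(A_X))$ involves the compression $P_nf_\b(A_X)P_n$ of the infinite--volume functional calculus, whereas $\F_n(\b)$ involves $f_\b$ of the compressed operator $A_{X_n}=P_nA_XP_n\lceil_{\ell^2(VX_n)}$. I would control the difference through finite propagation: writing $f_\b$ as its (uniformly convergent) Taylor series and using that $A_X^k$ has propagation $k$, one gets $\langle A_X^k\d_x,\d_x\rangle=\langle A_{X_n}^k\d_x,\d_x\rangle$ whenever the distance from $x$ to $\partial X_n$ exceeds $k$, so the two normalized diagonal sums differ only on a boundary collar. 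Estimating $\frac1{|VX_n|}\tr_n\big(P_nf_\b(A_X)P_n-f_\b(A_{X_n})\big)$ by the size of this collar and letting $n\to\infty$ is the delicate step, and it is here that the exponential growth of the Cayley Tree must be handled with care; the rapid factorial decay of the Taylor coefficients of $f_\b$, weighed against the controlled growth of the closed--walk counts $\langle A_X^k\d_x,\d_x\rangle$, is what I would use to keep the collar contribution tractable in the limit.

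Finally, once $\t(f_\b(A_X))=\F(\b)$ is established for all $\b\ge0$, I would pass to arbitrary $f\in C(\br)$ as announced. Given $\eps>0$ choose a finite combination $g=\sum_i c_if_{\b_i}$ with $\|f-g\|_{\infty,\s(A_X)}<\eps$; since each $\t_n$ is a state one has the uniform bound $|\t_n(h(A_X))|\le\|h\|_{\infty,\s(A_X)}$, whence $\big(\t_n(f(A_X))\big)_n$ is uniformly Cauchy (it is Cauchy for $g$, for which each summand converges), so it converges and $f(A_X)\in\cd_\t$. Its limit equals $\lim_i\int f_{\b_i}(\|A_X\|-x)\,\di F(x)=\int f(\|A_X\|-x)\,\di F(x)$ by dominated convergence against the probability measure $F$, which proves both assertions of the proposition.
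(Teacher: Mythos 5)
Your overall strategy --- reduce to the exponentials $f_\b$, invoke \cite{BDP} for $\F_n\to\F$, and pass to general $f\in C(\br)$ by a density and uniform-boundedness argument --- matches the paper's in outline, and your final approximation paragraph is sound. The genuine problem is the step you yourself flag as delicate: the identification of $\lim_n\t_n(f_\b(A_X))$ (compression of the infinite-volume functional calculus) with $\lim_n\frac1{|VX_n|}\tr_n\big(f_\b(A_{X_n})\big)$ (functional calculus of the compression). A boundary-collar estimate cannot close this gap, because the balls $X_n$ of a Cayley tree are not a F\o lner sequence: the set of vertices of $X_n$ within distance $k$ of the boundary has cardinality asymptotic to $\big(1-(q-1)^{-k}\big)|VX_n|$, so for every fixed propagation radius $k$ the collar carries a non-vanishing --- and, as $k$ grows, asymptotically full --- fraction of the normalized trace. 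Worse, the two quantities genuinely have different limits: by vertex-transitivity of the tree, $\langle f(A_X)\d_x,\d_x\rangle$ does not depend on $x$, so $\t_n(f(A_X))=\frac1{|VX_n|}\tr_n(P_nf(A_X)P_n)$ is \emph{constant} in $n$ and equals the integral of $f$ against the absolutely continuous Kesten--McKay spectral measure, whereas $\frac1{|VX_n|}\tr_n(f(A_{X_n}))$ converges to the integral against the purely atomic measure $F$ of \eqref{3}, whose atoms sit at the eigenvalues contributed by finite paths. No estimate can make these coincide, so the bridging step is not merely delicate but false.

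The paper's own proof sidesteps this entirely by working from the outset with $\t_n(f(A_{X_n}))$: for the finite matrix $A_{X_n}$ one has trivially $\t_n(f(A_{X_n}))=\int f(\|A_X\|-x)\,\di F_n(x)$ with $F_n$ the empirical eigenvalue distribution (the inverse Laplace transform of $\F_n$), and then the pointwise convergence $\F_n\to\F$ together with the continuity theorem for Laplace transforms (used already in Proposition \ref{1}) and the Helly--Bray theorem gives $\int f(\|A_X\|-x)\,\di F_n(x)\to\int f(\|A_X\|-x)\,\di F(x)$ for every continuous $f$ at once, with no need for Stone--Weierstrass. In other words, the statement must be read with the finite-volume operators inserted --- that is the convention under which $F$ is the integrated density of the states of the model --- and under the literal reading you adopted, the identity you set out to prove does not hold. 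If you adopt the finite-volume reading, your argument collapses to the paper's, and the Stone--Weierstrass reduction becomes an unnecessary detour.
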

\begin{proof}
Let $F_n$ be the inverse Laplace transform of $\F_n$ given in \eqref{4}, see e.g. \cite{Fe}, Chapter XIII. We get
$$
\t_n(f(A_{X_n}))=\int f(\|A_X\|-x)\di F_n(x)\to\int f(\|A_X\|-x)\di F(x)
$$
by Proposition \ref{1} and the first Helly Theorem (cf. \cite{Fe}, Theorem VIII1.1).
\end{proof}
 According to the previous results, the density of the states $F$ is the inverse Laplace transform of $\Phi$,
see e.g. \cite{Fe}, Theorem XIII.4.2.

Consider the graph $Y$ such that $VY=VX$, both equipped with the
same exhaustion $\{VY_n\}_{n\in\bn}$ such that $VY_n=VX_n$,
$n\in\bn$. The graph $Y$ is a {\it negligible} or {\it density zero
perturbation} of $X$ if it differs from $X$ by a number of edges
such that
$$
 \lim_n\frac{|\{e_{xy}\in EX\triangle EY\mid x\in VX_n\}|}{|VX_n|}=0\,,
$$
where $EX\triangle EY$ denotes the symmetric difference. To simplify
matters, we consider only perturbations involving edges, the more
general case involving also vertices can be treated analogously, see
\cite{FGI1}.

The following result, concerning general perturbations obtained by adding and/or
removing edges, of general networks, was proven in \cite{FGI1} (cf.
Theorem 6.1). We report its proof for the convenience of the reader.
Fix $X$ as the reference graph and define $A_X:=A$, $A_Y:=A+D$,
where $D$ is the perturbation, which is considered to eventually act
on $\overline{\car(D)}$. Put, for $\l\in\bc$,
\begin{equation*}
S(\l):=DP_{\overline{\car(D)}}R_A(\l)\lceil_{\overline{\car(D)}}\,.
\end{equation*}
\begin{Lemma}
\label{fgiii}
With the above notation, suppose that $|\car(D)|<+\infty$. Then $\l\in{\rm P}(A)$ is an eigenvalue of $A_Y$ if and only if $1$ is
an eigenvalue of $S(\l)$.  If this is the case, the corresponding
eigenvectors $v$, respectively $w$, are related by
\begin{equation}
\label{zeta}
v=R_{A_X}(\l)w\,,\quad w=DP_{\car(D)}v\,.
\end{equation}
\end{Lemma}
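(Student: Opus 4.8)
The plan is to treat this as a standard finite-rank perturbation (Schur-complement / Krein-type) argument. The two structural facts I would invoke throughout are: that $\l\in{\rm P}(A)$ makes $R_A(\l)=(\l\idd-A)^{-1}$ a genuine bounded operator (and in particular $\l$ is \emph{not} an eigenvalue of $A$); and that $D=A_Y-A_X$ is Hermitian with finite-dimensional, hence closed, range, so that $\car(D)=\overline{\car(D)}$, $D P_{\overline{\car(D)}}=D$, and $S(\l)$ is an honest endomorphism of the finite-dimensional space $\overline{\car(D)}$, for which ``$1$ is an eigenvalue'' is the correct spectral condition.

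For the forward direction, suppose $A_Y v=\l v$ with $v\neq0$, i.e. $(A+D)v=\l v$. Rearranging gives $(\l\idd-A)v=Dv$, and since $\l\in{\rm P}(A)$ I may invert to get $v=R_A(\l)Dv$. Setting $w:=Dv\in\car(D)=\overline{\car(D)}$, this reads $v=R_A(\l)w=R_{A_X}(\l)w$, which is the first relation in \eqref{zeta}; and $w=Dv=D P_{\overline{\car(D)}}v=D P_{\car(D)}v$ by Hermiticity of $D$, which is the second. Next I would check $w\neq0$: if $w=Dv=0$ then $(\l\idd-A)v=0$ with $v\neq0$, forcing $\l$ to be an eigenvalue of $A$, contradicting $\l\in{\rm P}(A)$. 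Finally, feeding $w$ into the definition of $S(\l)$ and using $v=R_A(\l)w$, I get $S(\l)w=D P_{\overline{\car(D)}}R_A(\l)w=D P_{\overline{\car(D)}}v=Dv=w$, so $1$ is an eigenvalue of $S(\l)$ with eigenvector $w$.

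For the converse I run the same computation backwards. Given $S(\l)w=w$ with $w\in\overline{\car(D)}$, $w\neq0$, I define $v:=R_A(\l)w$ (so the first relation of \eqref{zeta} holds by construction) and read off $w=S(\l)w=D P_{\overline{\car(D)}}v=Dv$, again by Hermiticity. This already yields $v\neq0$, since otherwise $w=Dv=0$. It then remains to verify $(A+D)v=\l v$; the one identity to keep straight is the resolvent relation $A R_A(\l)=\l R_A(\l)-\idd$, which gives $Av=\l v-w$, whence $(A+D)v=(\l v-w)+w=\l v$.

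The argument is essentially bookkeeping, so I do not expect a deep obstacle; the points needing care are (i) the two non-triviality checks ($w\neq0$ in the forward direction, $v\neq0$ in the converse), each of which quietly uses $\l\in{\rm P}(A)$, and (ii) the justification of $D P_{\overline{\car(D)}}=D$, i.e. that $D$ annihilates $\overline{\car(D)}^{\perp}=\ker D$. The latter is precisely where Hermiticity of $D$ (automatic, as $D$ is a difference of adjacency matrices) and the closedness of $\car(D)$ (from $|\car(D)|<+\infty$) enter, and it is also what guarantees that $S(\l)$ is well defined as a matrix on a finite-dimensional space, making the eigenvalue condition the right one to match against.
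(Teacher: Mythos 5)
Your argument is correct and follows essentially the same route as the paper's proof: rearrange $(A+D)v=\l v$ to $v=R_A(\l)DP_{\car(D)}v$, multiply by $DP_{\car(D)}$ to exhibit $w$ as a fixed point of $S(\l)$, and reverse the computation for the converse. The only differences are that you spell out the two non-triviality checks and the verification that $v:=R_A(\l)w$ is an eigenvector of $A_Y$ (which the paper leaves as ``easy to show''), and you make explicit the use of Hermiticity of $D$ in the identity $D=DP_{\car(D)}$ --- worthwhile additions, but the same proof.
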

\begin{proof}
Let $\l\not\in\s(A)$, and suppose there is $v\in\ell^2(VX)$ such that $A_{Y}v=\l v$.  From the definition of $D$, we recover $\l v-Av=DP_{\car(D)}v$, which implies
\begin{equation*}
v=R_{A}(\l)DP_{\car(D)}v\,.
\end{equation*}
Multiplying both sides by $DP_{\car(D)}$,
\begin{equation*}
DP_{\car(D)}v=DP_{\car(D)}R_{A}(\l)DP_{\car(D)}v\,.
\end{equation*}
Namely, $w:=DP_{\car(D)}v$ is an eigenvector of $S(\l)$ corresponding to the eigenvalue $1$.
Conversely, let $\l\not\in \s(A)$ and suppose that $w$ is an eigenvector of $S(\l)$ corresponding to the eigenvalue $1$.  Extend $w$ to $0$ outside $\car(D)$. Define
$v:=R_{A}(\l)w$.  Then, it is easy to show that $v$ is an
eigenvector of $A_{Y}\equiv A_X+D$ with eigenvalue $\l$.
 \end{proof}
Let $D_{XY}:=A_X-A_Y$. It is easily seen that
\begin{equation}
\label{estadc}
\tr_n(P_nD_{XY}^2P_n)=|\{e_{xy}\in EX\triangle EY\mid x\in VX_n\}|\,.
\end{equation}
\begin{Prop}
\label{density0} 
Let $Y$ be a negligible perturbation of the tree $X$. Then $\{f(A_Y)\mid f\in C(\br)\}\subset\cd_\t$, and
\begin{equation}
\label{cazcs}
\t(f(A_Y))=\t(f(A_X))\,.
\end{equation}
\end{Prop}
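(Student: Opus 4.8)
The plan is to prove the identity first on polynomials in $A_Y$ and then to extend it to all of $C(\br)$ by a uniform approximation. Since $A_X$ and $A_Y$ are bounded selfadjoint operators, their spectra lie in a common compact interval $K$, and by Stone--Weierstrass every $f\in C(\br)$ is a uniform limit on $K$ of polynomials $p_l$. Because each $\t_n$ is a state (in particular $|\t_n(B)|\le\|B\|$), the operator-norm convergence $p_l(A_Y)\to f(A_Y)$ and $p_l(A_X)\to f(A_X)$ transfers the statement from polynomials to $f$ by a routine $\eps/3$ estimate, using that $\t(f(A_X))$ already exists by Proposition~\ref{a}. Thus the whole content reduces to showing, for each integer $k\ge0$, that $\t_n(A_Y^k)-\t_n(A_X^k)\to0$; combined with $\t_n(A_X^k)\to\t(A_X^k)$ this simultaneously gives $A_Y^k\in\cd_\t$ and $\t(A_Y^k)=\t(A_X^k)$.

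For the monomial step I would set $D:=A_Y-A_X=-D_{XY}$ and use the telescoping identity
$$A_Y^k-A_X^k=\sum_{j=0}^{k-1}A_X^{\,j}\,D\,A_Y^{\,k-1-j}\,.$$
Applying $\t_n$ and then the Cauchy--Schwarz inequality for the state $\t_n$ to each summand, written as $\t_n\big((A_X^{\,j})^*(DA_Y^{\,k-1-j})\big)$ and recalling that $A_X$, $D$, $A_Y$ are selfadjoint, yields
$$|\t_n(A_X^{\,j}DA_Y^{\,k-1-j})|^2\le \t_n(A_X^{\,2j})\,\t_n\big(A_Y^{\,m}D^2A_Y^{\,m}\big)\le\|A_X\|^{2j}\,\t_n\big(A_Y^{\,m}D^2A_Y^{\,m}\big)\,,$$
with $m=k-1-j$. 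Since there are only $k$ summands, everything reduces to proving $\t_n(A_Y^{\,m}D^2A_Y^{\,m})\to0$ for each fixed $m$.

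The key estimate controls this last quantity by the density-zero data. Writing $\t_n(A_Y^{\,m}D^2A_Y^{\,m})=\tfrac1{|VX_n|}\|DA_Y^{\,m}P_n\|_{\mathrm{HS}}^2$ and using that $A_Y^{\,m}$ has propagation $m$, so that $A_Y^{\,m}P_n$ has range in $\ell^2(VX_{n+m})$ and hence $P_nA_Y^{\,m}=P_nA_Y^{\,m}P_{n+m}$, I would pass to the adjoint $(DA_Y^{\,m}P_n)^*=P_nA_Y^{\,m}D=P_nA_Y^{\,m}P_{n+m}D$ and bound its Hilbert--Schmidt norm by $\|A_Y\|^{m}\|P_{n+m}D\|_{\mathrm{HS}}$. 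Since $\|P_{n+m}D\|_{\mathrm{HS}}^2=\tr_n(P_{n+m}D^2)=|VX_{n+m}|\,\t_{n+m}(D^2)$, this gives
$$\t_n\big(A_Y^{\,m}D^2A_Y^{\,m}\big)\le\|A_Y\|^{2m}\,\frac{|VX_{n+m}|}{|VX_n|}\,\t_{n+m}(D^2)\,.$$
By \eqref{estadc} and the negligibility hypothesis one has $\t_{n+m}(D^2)=\t_{n+m}(D_{XY}^2)\to0$, while $|VX_{n+m}|/|VX_n|$ stays bounded for each fixed $m$ because the exhaustion by balls of a uniformly bounded degree graph satisfies $|VX_{n+m}|\le C_m|VX_n|$ with $C_m$ depending only on $m$ and the degree. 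Hence the product tends to $0$, completing the monomial step.

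The main obstacle I anticipate is precisely the non-cyclicity of the functionals $\t_n$ combined with the non-amenability of the Cayley tree: one cannot simply argue that ``boundary'' corrections are negligible, because for an exponentially growing graph the boundary of $VX_n$ is comparable to its whole volume. What rescues the argument is that the comparison only ever involves a fixed finite shift $m$, namely the propagation of the monomial under consideration, for which the growth ratio $|VX_{n+m}|/|VX_n|$ remains uniformly bounded even on the tree. The Cauchy--Schwarz peeling above is arranged exactly so that no shift growing with $n$ is ever needed, which is what makes the estimate survive the exponential growth.
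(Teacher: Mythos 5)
Your argument follows essentially the same route as the paper's proof: the same telescoping identity for $A_Y^k-A_X^k$, Cauchy--Schwarz for the state $\t_n$, a finite--propagation/Hilbert--Schmidt estimate reducing everything to $\t_{n+m}(D^2)\to0$ via \eqref{estadc}, the observation that the volume ratio $|VX_{n+m}|/|VX_n|$ (the paper's $\a(n,s,Q)$) stays bounded for each fixed $m$ despite the exponential growth, and finally the Weierstrass reduction from monomials to $C(\br)$. The one substantive difference is which operator carries the propagation argument. You split the Cauchy--Schwarz so that $D^2$ ends up conjugated by powers of $A_Y$, and you then invoke that $A_Y^{m}$ has propagation $m$ with respect to the exhaustion by $X$--balls, i.e.\ $P_nA_Y^{m}=P_nA_Y^{m}P_{n+m}$. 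This is automatic for the self--loop perturbations actually studied in the paper, but it is not guaranteed by the definition of a negligible perturbation, which only counts the edges in $EX\triangle EY$ and places no bound on the $X$--distance between their endpoints; a negligible perturbation may join vertices at unbounded distance, in which case $A_Y$ has infinite propagation relative to the $X$--balls and your identity $P_nA_Y^{m}=P_nA_Y^{m}P_{n+m}$ fails. The paper avoids this by ordering the telescoping sum as $\sum_l A_Y^{k-l}D_{XY}A_X^{l-1}$ and grouping the Cauchy--Schwarz so that only $(A_X^*)^{s}D_{XY}^2A_X^{s}$ appears, thus using the propagation of the \emph{unperturbed} adjacency only. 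Your proof is repaired by the same one--line regrouping, namely $|\t_n(A_X^{j}DA_Y^{m})|\leq\t_n(A_X^{j}D^2A_X^{j})^{1/2}\|A_Y\|^{m}$, so the gap is minor and everything else, including the $\eps/3$ reduction, matches the paper.
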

\begin{proof}
We start by noticing that
\begin{equation}
\label{rrefe}
A_Y^k-A_X^k=\sum_{l=1}^{k}A_Y^{k-l}D_{XY}A_X^{l-1}\,.
\end{equation}
As the power $s$ of the adjacency matrix $A$ has propagation $s$, hence
$A_Xv\in B_{n+s}$, provided $v\in B_{n}$,
by the Schwarz inequality (cf. \cite{T}, Proposition I.9.5) we obtain
\begin{align}
\label{cazcsss}
|\t_n(A_Y^{r}D_{YX}&A_X^{s})|\leq\|A_Y\|^{r}\t_n((A_X^*)^{s}D_{YX}^2A_X^{s})^{1/2}\nn\\
\leq&\a(n,s,Q)
\|A_Y\|^{r}\|A_X\|^{s}\t_{n+s}(D_{YX}^2)^{1/2}
\end{align}
where
$$
\a(n,s,Q):=
\begin{cases}
\frac{2(n+s)+1}{2n+1}\,,&Q=2\,,\\
\frac{Q(Q-1)^{n+s}-2}{Q(Q-1)^n-2}\,,&Q>2\,,
\end{cases} 
$$
converges to $(Q-1)^s$ as $n$ increases.\footnote{We are indebted to the referee who pointed out \eqref{rrefe}.} By taking into account \eqref{rrefe} and \eqref{cazcsss}, we get
$$
|\t_n(A_Y^k-A_X^k)|\approx\sum_{l=1}^{k}(Q-1)^{l-1}\|A_Y\|^{k-l}\|A_X\|^{l-1}
\t_{n+l-1}(D_{YX}^2)^{1/2}\,,
$$
which goes to $0$ thanks to \eqref{estadc}. This leads to \eqref{cazcs} for each polynomial 
in $A_X$ and $A_Y$. The proof follows by the Weierstrass Density Theorem and a standard approximation argument.
\end{proof}
As the adjacency operator has non negative entries, we have
$\|A_Y\|\geq\|A_X\|$ under general additive perturbations. The most
interesting case for the physical applications is when the additive
perturbations are negligible.
Put $\d:=\|A_X\|-\|A_Y\|$. It has a very precise physical meaning as an effective chemical potential (cf. \cite{FGI1}, Proposition 7.1). In the case of additive negligible perturbations, we get
\begin{Cor}
Let $F_X:=N_{\|A_X\|\idd-A_X}$, $F_Y:=N_{\|A_Y\|\idd-A_Y}$. We have
\begin{equation}
\label{6}
F_Y(x)=F_X(x+\d)\,.
\end{equation}
\end{Cor}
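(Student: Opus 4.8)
The plan is to reduce the statement to a measure-theoretic identity by testing both integrated densities of states against an arbitrary continuous function and then exploiting the invariance under negligible perturbations established in Proposition \ref{density0}. Since $F_X$ and $F_Y$ are cumulative functions of probability measures with compact support, it suffices to prove that the corresponding measures integrate every $g\in C(\br)$ in the same way, and then to identify the shift.

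First I would fix $g\in C(\br)$ and unwind the definition $F_Y=N_{\|A_Y\|\idd-A_Y}$. Setting $H_Y:=\|A_Y\|\idd-A_Y$, the construction of the integrated density of the states gives $\int g\,dF_Y=\tau(g(H_Y))$. Because $A_Y$ is selfadjoint and $t\mapsto g(\|A_Y\|-t)$ is continuous, the continuous functional calculus lets me rewrite $g(H_Y)=g_Y(A_Y)$ with $g_Y(t):=g(\|A_Y\|-t)\in C(\br)$. Before anything else, one must check that $H_Y$ is a legitimate input for the IDS machinery: it is positive with $\min\sigma(H_Y)=0$ (true since $\|A_Y\|=\max\sigma(A_Y)$ by positivity preservation), and $\{f(H_Y)\mid f\in C(\br)\}\subset\cd_\tau$, which follows from $\{f(A_Y)\mid f\in C(\br)\}\subset\cd_\tau$ in Proposition \ref{density0}, each $f(H_Y)$ being a continuous function of $A_Y$.

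Next I would transport the computation to $A_X$. Proposition \ref{density0} yields $\tau(g_Y(A_Y))=\tau(g_Y(A_X))=\tau(g(\|A_Y\|\idd-A_X))$. Writing $\|A_Y\|\idd-A_X=(\|A_X\|\idd-A_X)-\delta\idd$ with $\delta=\|A_X\|-\|A_Y\|$, this equals $\tau(g_\delta(H_X))$, where $H_X:=\|A_X\|\idd-A_X$ and $g_\delta(x):=g(x-\delta)$. Invoking the defining property of $F_X=N_{H_X}$ (equivalently Proposition \ref{a}), I get $\tau(g_\delta(H_X))=\int g(x-\delta)\,dF_X(x)$. Altogether, $\int g\,dF_Y=\int g(x-\delta)\,dF_X(x)$ for every $g\in C(\br)$.

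Finally I would read off the shift. Performing the substitution $y=x-\delta$ in the Lebesgue--Stieltjes integral turns the right-hand side into $\int g(y)\,d\big(F_X(y+\delta)\big)$. Thus the two probability measures $dF_Y$ and $d\big(F_X(\cdot+\delta)\big)$ agree against all of $C(\br)$, hence coincide, and comparing their right-continuous cumulative functions (both vanishing at $-\infty$) gives $F_Y(x)=F_X(x+\delta)$, as claimed. I do not expect a genuine obstacle here; the computation is routine once the functional calculus is set up. The only points requiring care are the preliminary verification that $F_Y$ is well defined, and the sign bookkeeping for $\delta$ (which is $\leq 0$ since $\|A_Y\|\geq\|A_X\|$), so that the shifted cumulative function lands with the correct orientation and still vanishes on the negative half-line.
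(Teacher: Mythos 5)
Your proposal is correct and follows essentially the same route as the paper: test both densities of states against an arbitrary continuous function, invoke Proposition \ref{density0} to replace $A_Y$ by $A_X$ inside $\t$, absorb the difference of norms into the shift $\d=\|A_X\|-\|A_Y\|$, and read off $F_Y(x)=F_X(x+\d)$ from the resulting integral identity. The extra remarks on the well-definedness of $F_Y$ and the sign of $\d$ are sensible but do not change the argument.
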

\begin{proof}
By taking into account the definition of the integrated density of
the states and Proposition \ref{density0}, we get the following:
\begin{align*}
&\int f(x)\di F_Y(x)=\t(f(\|A_Y\|\idd-A_Y))=\t(f(\|A_Y\|\idd-A_X))\\
=&\t(f(\|A_X\|\idd-A_X-\d\idd))=\int f(x-\d)\di F_X(x)=\int f(x)\di F_X(x+\d)\,.
\end{align*}
This leads to \eqref{6}.
\end{proof}
We end the present section by briefly describing the networks
studied in the present paper. We add self loops on a negligible
quantity of vertices of a fixed homogeneous tree (cf. Fig.
\ref{Fig6}).
 \begin{figure}[ht]
     \centering
     \psfig{file=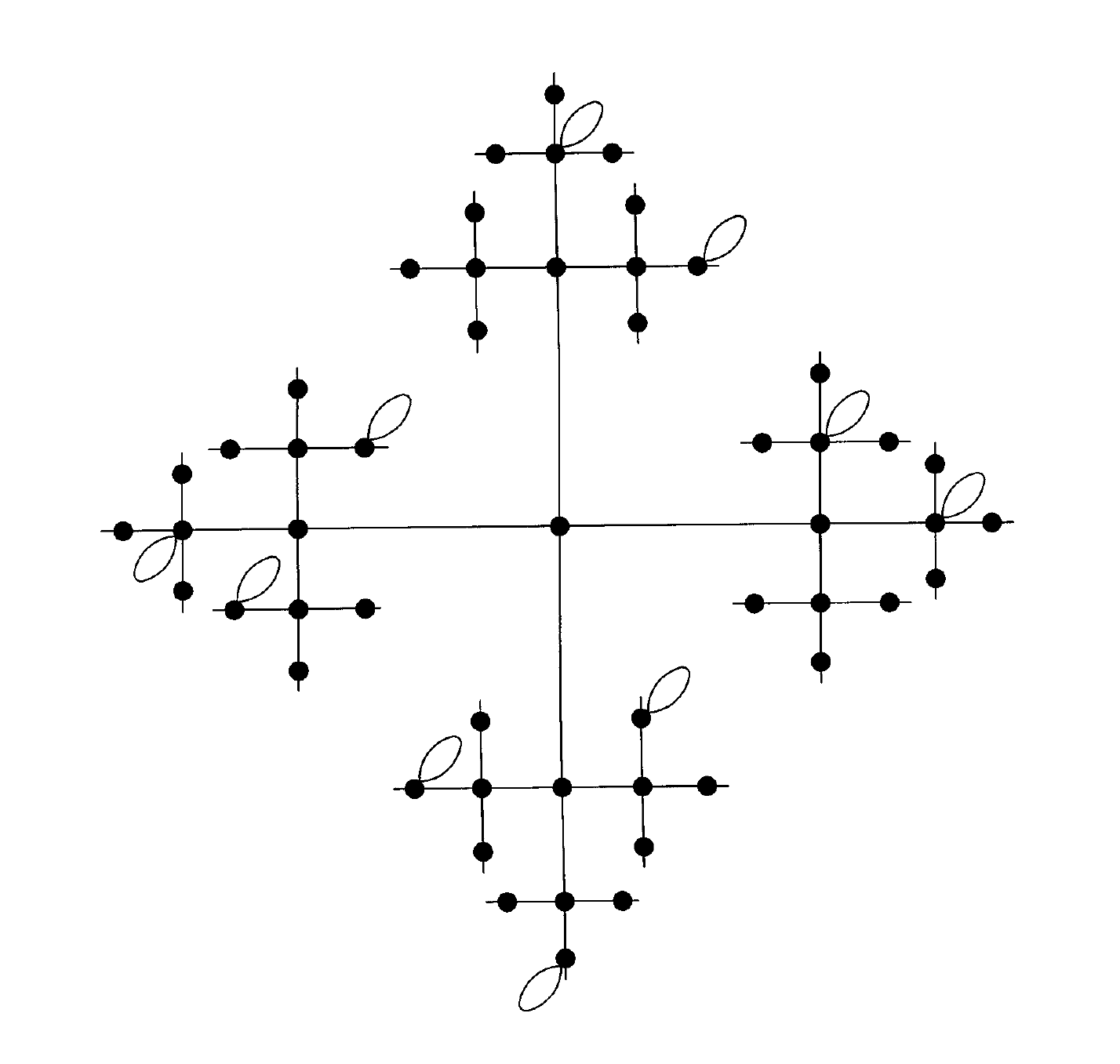,height=2.5in}
     \caption{The perturbation of the Cayley Tree of degree 4 by self loops.}
     \label{Fig6}
     \end{figure}
On one hand, the mathematical analysis becomes simpler as it will
become clear below. On the other hand, as explained in \cite{FGI1},
it is expected that our simplified model captures all the
qualitative phenomena appearing in more complicated examples
relative to general additive negligible perturbations.

\section{the norm of the adjacency operator for perturbed graphs}
\label{sec1}

We start with the homogeneous Cayley Tree $\bg^Q$ of order $Q$,
together with a {\it root} $0\in\bg^Q$ kept fixed during the
analysis. The ball $\bg^Q_n\subset\bg^Q$ is the subgraph made of
vertices and edges at the distance $n$ from the root $0$. The non
homogeneous graphs we deal with are obtained by adding a loop on any
vertex of a subgraph isomorphic to a tree of order $q$ with $1<
q\leq Q$. Another situation is when we add self loops along a sub
path isomorphic to $\bn$ starting from the root. We denote such
graphs by $\bg^{Q,q}$ and $\bh^Q$, respectively, see Fig. \ref{Fig8}
and Fig. \ref{Fig13}. By an abuse of the notation, we write simply
$X$ for the set $VX$ of the vertices of the graph $X$ when this
causes no confusion.

It is not difficult to show that $\bg^{Q,q}$ and $\bh^Q$ are
negligible perturbation of $\bg^Q$, provided $q<Q$. The case
$\bg^{Q,q}$ easily follows from
$$
\frac{|\{e_{xy}\in EX\triangle EY\mid x\in VX_n\}|}{|V\bg^{Q}_n|}
=2\frac{|V\bg^{q}_n|}{|V\bg^{Q}_n|}
=2\frac{(Q-2)[q(q-1)^n-2]}{(q-2)[Q(Q-1)^n-2]}\approx(\frac{q}{Q})^n\,,
$$
whereas the case $\bh^Q$ is analogous to $\bg^{Q,2}$.

The first step is to compute the norm of $\bg^{Q,q}$ by the
results in Lemma \ref{fgiii}. To this end, we consider the more
general situation described as follows. Let $S\subset\bg^{Q}$,
together with $S_n:=S\cap\bg^{Q}_n$. Add a loop to each site of $S$.
Denote by $Y$ and $Y_n$ the graphs obtained by adding self loops on
the sites of $S$ and $S_n$, respectively. Thus, if $S$ is any
subtree of order $q$, $Y=\bg^{Q,q}$, and $Y_n$ is $\bg^{Q}$
perturbed only along the finite subtree $\bg^q_n$, see Fig.
\ref{Fig8}. Define for $\l>\|A_{\bg^{Q}}\|$,
$f_n(\l):=\|P_{\ell^2(S_n)}R_{A_{\bg^{Q}}}(\l)P_{\ell^2(S_n)}\|$ and
$f(\l):=\|P_{\ell^2(S)} R_{A_{\bg^{Q}}}(\l)P_{\ell^2(S)}\|$.
\begin{Lemma}
\label{00}
With the above notations, we get
\begin{itemize}
\item[(i)] $f_n(\l)\uparrow f(\l)$,
\item[(ii)] $\l<\m\Longrightarrow f(\l)>f(\m)$, $f_n(\l)>f_n(\m)$, $n=0,1,2,\dots$\,.
\end{itemize}
\end{Lemma}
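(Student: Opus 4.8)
The plan is to reduce both assertions to two elementary facts about the resolvent $R_{A_{\bg^{Q}}}(\l)$ for $\l>\|A_{\bg^{Q}}\|$: that it is a strictly positive selfadjoint operator, and that it is strictly operator-monotone decreasing in $\l$. The positivity holds because $\l>\|A_{\bg^{Q}}\|$ forces $\l\idd-A_{\bg^{Q}}>0$, so its inverse is positive; equivalently the Neumann series $\sum_{k\geq0}A_{\bg^{Q}}^{k}/\l^{k+1}$ converges to $R_{A_{\bg^{Q}}}(\l)$ with nonnegative entries. Consequently each compression $P_{\ell^{2}(S_{n})}R_{A_{\bg^{Q}}}(\l)P_{\ell^{2}(S_{n})}$ and $P_{\ell^{2}(S)}R_{A_{\bg^{Q}}}(\l)P_{\ell^{2}(S)}$ is a positive operator, and by the variational formula for the norm of a positive operator, together with the fact that $P_{\ell^{2}(S_{n})}$ acts as the identity on its range,
$$
f_{n}(\l)=\sup_{v\in\ell^{2}(S_{n}),\,\|v\|=1}\langle R_{A_{\bg^{Q}}}(\l)v,v\rangle\,,\qquad
f(\l)=\sup_{v\in\ell^{2}(S),\,\|v\|=1}\langle R_{A_{\bg^{Q}}}(\l)v,v\rangle\,.
$$

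For (i) I would argue by monotone exhaustion. Since $S_{n}\uparrow S$ one has $\ell^{2}(S_{n})\subset\ell^{2}(S_{n+1})\subset\ell^{2}(S)$ with $\bigcup_{n}\ell^{2}(S_{n})$ dense in $\ell^{2}(S)$; as the two suprema above run over nested sets of unit vectors, $f_{n}(\l)$ is nondecreasing with $f_{n}(\l)\leq f(\l)$. To obtain equality in the limit, I would take a maximizing sequence of unit vectors in $\ell^{2}(S)$ for $f(\l)$, approximate each one by a vector in some $\ell^{2}(S_{n})$ (possible by the density just noted, each $S_{n}$ being a finite ball), and invoke continuity of the quadratic form $v\mapsto\langle R_{A_{\bg^{Q}}}(\l)v,v\rangle$ together with the fact that renormalizing a nearly-unit vector perturbs the form arbitrarily little. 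This gives $\lim_{n}f_{n}(\l)\geq f(\l)$, hence $f_{n}(\l)\uparrow f(\l)$.

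For (ii) the key computation is a strict spectral gap. Writing $g(t):=\frac1{\l-t}-\frac1{\m-t}=\frac{\m-\l}{(\l-t)(\m-t)}$, the spectral theorem gives $R_{A_{\bg^{Q}}}(\l)-R_{A_{\bg^{Q}}}(\m)=g(A_{\bg^{Q}})$. Since $\s(A_{\bg^{Q}})\subset[-\|A_{\bg^{Q}}\|,\|A_{\bg^{Q}}\|]$ is compact and $\l,\m>\|A_{\bg^{Q}}\|$, the continuous function $g$ is bounded below on the spectrum by the strictly positive constant $c:=\frac{\m-\l}{(\l+\|A_{\bg^{Q}}\|)(\m+\|A_{\bg^{Q}}\|)}>0$, whence
$$
R_{A_{\bg^{Q}}}(\l)-R_{A_{\bg^{Q}}}(\m)\geq c\,\idd\,.
$$
Feeding this into the variational formula, every unit vector $v$ in $\ell^{2}(S)$ (respectively $\ell^{2}(S_{n})$) satisfies $\langle R_{A_{\bg^{Q}}}(\l)v,v\rangle\geq\langle R_{A_{\bg^{Q}}}(\m)v,v\rangle+c$; running $v$ through a maximizing sequence for $f(\m)$ (respectively $f_{n}(\m)$) yields $f(\l)\geq f(\m)+c>f(\m)$, and likewise for $f_{n}$.

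I expect the only genuinely delicate point to be the \emph{strictness} in (ii): a bare positivity $R_{A_{\bg^{Q}}}(\l)-R_{A_{\bg^{Q}}}(\m)>0$ would not survive passage to a supremum over a maximizing sequence, so it is essential to exhibit the uniform gap $c>0$, which is precisely where boundedness of $A_{\bg^{Q}}$ (compactness of its spectrum) enters. Everything else reduces to the variational characterization of the norm of a positive operator and the density of $\bigcup_{n}\ell^{2}(S_{n})$ in $\ell^{2}(S)$.
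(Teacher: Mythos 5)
Your proof is correct, and it diverges from the paper's mainly in part (i). There, the paper simply invokes Theorem 6.8 of Seneta's book on nonnegative matrices, which yields the convergence of the norms of the truncations $P_{\ell^2(S_n)}R_{A_{\bg^{Q}}}(\l)P_{\ell^2(S_n)}$ to that of $P_{\ell^2(S)}R_{A_{\bg^{Q}}}(\l)P_{\ell^2(S)}$ because the latter has positive entries; you instead exploit selfadjointness and operator positivity of the resolvent to write $f_n(\l)$ and $f(\l)$ as suprema of the quadratic form $v\mapsto\langle R_{A_{\bg^{Q}}}(\l)v,v\rangle$ over nested unit spheres, and conclude by density of $\bigcup_n\ell^2(S_n)$ and continuity. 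Your route is self-contained and elementary, at the price of using selfadjointness, which Seneta's truncation theorem does not require; since the adjacency here is selfadjoint, nothing is lost. For (ii) the two arguments coincide in substance: the paper differentiates $\langle R_A(x)v,v\rangle$ via the resolvent identity, bounds the derivative above by a uniform negative constant $-c$ on $[\l,\m]$, and integrates to get $f(\m)\leq f(\l)-c(\m-\l)$, whereas you obtain the equivalent uniform gap $R_A(\l)-R_A(\m)\geq c\,\idd$ in one step from $g(A)$ with $g(t)=(\m-\l)/\bigl((\l-t)(\m-t)\bigr)$. You correctly identify the one delicate point: it is the uniform gap $c>0$, not mere strict positivity of the difference of resolvents, that lets strictness survive the supremum over a maximizing sequence, and the paper's integrated estimate plays exactly that role.
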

\begin{proof}
(i) It follows from  Theorem 6.8  in \cite{S}, as
$P_{\ell^2(S)}R_{A_{\bg^{Q}}}P_{\ell^2(S)}$ has positive entries.

(ii) Let $A$ be a selfadjoint operator and $P$ a selfadjoint projection, both acting on a Hilbert space $\ch$. Let $\|A\|<\l\leq x\leq\m$ and $v\in\ch$ be a unit vector. We obtain by the first identity of the resolvent
$$
\frac{\di\,\,\,}{\di x}\langle R_A(x)v,v\rangle=-\|R_A(x)v\|^2\leq-c\,,
$$
where $c:=\inf_{x\in[\l,\m]}\|x\idd-A\|^{-1}>0$. By integrating both members from $\l$ to $\m$, and taking the supremum on all the unit vectors $v\in P\ch$, we get
$$
f(\m)<f(\l)-c(\m-\l)\leq f(\l)\,.
$$
The assertion follows by putting $A=A_{\bg^{Q}}$ and $P=P_S$. The proof for the $f_n$ is analogous.
\end{proof}
The main object for the analysis of the spectral properties of the resolvent is the secular equation which, for the cases under consideration,  is described in the following
\begin{Thm}
\label{es}
For $\l>\|A_{\bg^{Q}}\|$, the equation
\begin{equation}
\label{sc}
\|P_{\ell^2(S)} R_{A_{\bg^{Q}}}(\l)P_{\ell^2(S)}\|=1
\end{equation}
has at most one solution. If this is the case, the unique solution of \eqref{sc} is the norm $\|A_Y\|$
of $A_Y$, where $Y$ is the perturbation of  $\bg^{Q}$ previously defined. Conversely, if
$\l_{*}:=\|A_Y\|>\|A_{\bg^{Q}}\|$, then $\l_*$ fulfills \eqref{sc}.
\end{Thm}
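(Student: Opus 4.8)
The plan is to turn the scalar equation \eqref{sc} into a spectral membership statement for $A_Y$ via a Birman--Schwinger factorization, and to exploit the strict monotonicity of $\l\mapsto f(\l):=\|P_{\ell^2(S)}R_{A_{\bg^{Q}}}(\l)P_{\ell^2(S)}\|$ already recorded in Lemma~\ref{00}(ii). First note that the perturbation is $D=P_{\ell^2(S)}\geq0$, so $A_Y=A_{\bg^{Q}}+D$ and $S(\l)=P_{\ell^2(S)}R_{A_{\bg^{Q}}}(\l)P_{\ell^2(S)}$ is a \emph{positive} operator on $\ell^2(S)$ for every real $\l>\|A_{\bg^{Q}}\|$, since $R_{A_{\bg^{Q}}}(\l)=(\l\idd-A_{\bg^{Q}})^{-1}\geq0$ there. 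Consequently $f(\l)=\|S(\l)\|=\max\s(S(\l))\in\s(S(\l))$, and $\l\mapsto f(\l)$ is continuous (the resolvent being norm analytic on the resolvent set) with $f(\l)\to0$ as $\l\to+\infty$. The ``at most one solution'' assertion is then immediate: by Lemma~\ref{00}(ii) $f$ is strictly decreasing on $(\|A_{\bg^{Q}}\|,+\infty)$, so $f(\l)=1$ can hold for at most one $\l$.

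The key step is the exact equivalence
\[
\l\in\s(A_Y)\ \Longleftrightarrow\ 1\in\s(S(\l)),\qquad \l>\|A_{\bg^{Q}}\|.
\]
To obtain it I would factor $\l\idd-A_Y=(\l\idd-A_{\bg^{Q}})^{1/2}\big(\idd-T(\l)\big)(\l\idd-A_{\bg^{Q}})^{1/2}$ with $T(\l):=(\l\idd-A_{\bg^{Q}})^{-1/2}P_{\ell^2(S)}(\l\idd-A_{\bg^{Q}})^{-1/2}$; since $(\l\idd-A_{\bg^{Q}})^{1/2}$ is boundedly invertible for $\l>\|A_{\bg^{Q}}\|$, the left side is invertible if and only if $\idd-T(\l)$ is, i.e. iff $1\notin\s(T(\l))$. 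Writing $C:=P_{\ell^2(S)}(\l\idd-A_{\bg^{Q}})^{-1/2}$ one has $T(\l)=C^{*}C$ and $S(\l)=CC^{*}$, so $T(\l)$ and $S(\l)$ share the same nonzero spectrum; in particular $1\in\s(T(\l))\iff1\in\s(S(\l))$, which yields the displayed equivalence. This is just the Birman--Schwinger principle, and it refines Lemma~\ref{fgiii}, whose point-spectrum version would serve equally well after passing to the finite truncations $Y_n$ of Lemma~\ref{00}(i). I will also use repeatedly that, because $D\geq0$ gives $A_Y\geq A_{\bg^{Q}}$, one has $\min\s(A_Y)\geq\min\s(A_{\bg^{Q}})=-\|A_{\bg^{Q}}\|$; hence whenever $\max\s(A_Y)>\|A_{\bg^{Q}}\|$ it follows that $\|A_Y\|=\max\s(A_Y)$.

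For the forward implication, suppose $f(\l_0)=1$ for some $\l_0>\|A_{\bg^{Q}}\|$. Then $1=\|S(\l_0)\|\in\s(S(\l_0))$, so $\l_0\in\s(A_Y)$ by the equivalence. For every $\l>\l_0$ strict monotonicity gives $f(\l)<f(\l_0)=1$, hence $1\notin\s(S(\l))$ and $\l$ lies in the resolvent set of $A_Y$; thus $\s(A_Y)\cap(\l_0,+\infty)=\emptyset$ and $\l_0=\max\s(A_Y)$. Since $\l_0>\|A_{\bg^{Q}}\|\geq-\min\s(A_Y)$, we conclude $\l_0=\|A_Y\|$, which is the asserted value.

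For the converse, assume $\l_*:=\|A_Y\|>\|A_{\bg^{Q}}\|$. By the remark above $\l_*=\max\s(A_Y)\in\s(A_Y)$, so the equivalence forces $1\in\s(S(\l_*))$ and therefore $f(\l_*)=\|S(\l_*)\|\geq1$. Were $f(\l_*)>1$, continuity of $f$ together with $f(\l)\to0$ and strict monotonicity would produce some $\l_0>\l_*$ with $f(\l_0)=1$; but the forward implication just proved would give $\l_0=\|A_Y\|=\l_*$, contradicting $\l_0>\l_*$. Hence $f(\l_*)=1$, i.e. $\l_*$ solves \eqref{sc}. The main obstacle in this scheme is the Birman--Schwinger equivalence itself: one must justify the $CC^{*}$/$C^{*}C$ spectral identification (or, if one prefers the truncation route via Lemmas~\ref{fgiii} and~\ref{00}(i), the interchange of the limit $f_n\uparrow f$ with $\|A_{Y_n}\|\uparrow\|A_Y\|$), everything else being monotonicity and elementary spectral bookkeeping.
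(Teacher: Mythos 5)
Your proof is correct, but it takes a genuinely different route from the paper's. The paper never factorizes $\l\idd-A_Y$; instead it works through the finite truncations $Y_n$: Lemma~\ref{fgiii} (which is stated only for finite-rank perturbations) identifies $\|A_{Y_n}\|$ as the unique solution of the finite secular equation $f_n(\l_n)=1$, and the passage to the infinite perturbation is done by combining $f_n\uparrow f$ (Lemma~\ref{00}(i), resting on positivity of the matrix entries), the monotone convergence $\|A_{Y_n}\|\uparrow\|A_Y\|$, and the Dini theorem to justify $f(\l_*)=\lim_n f_n(\l_n)=1$. You instead prove the full-spectrum equivalence $\l\in\s(A_Y)\iff 1\in\s(S(\l))$ for $\l>\|A_{\bg^{Q}}\|$ in one stroke via the Birman--Schwinger factorization $\l\idd-A_Y=(\l\idd-A_{\bg^{Q}})^{1/2}(\idd-T(\l))(\l\idd-A_{\bg^{Q}})^{1/2}$ and the $CC^{*}$/$C^{*}C$ identification of nonzero spectra; the rest is the same monotonicity bookkeeping from Lemma~\ref{00}(ii). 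Your approach buys a cleaner, limit-free argument that bypasses Dini and the finite-volume approximation entirely, and it upgrades the eigenvalue statement of Lemma~\ref{fgiii} to a resolvent-set statement; its price is that it leans on the perturbation $D$ being an orthogonal projection (more generally, a positive operator admitting a factorization $D=B^{*}B$), which does hold for the self-loop perturbations at issue here. The paper's route is less elegant but reuses the finite-volume machinery ($Y_n$, their Perron--Frobenius eigenvectors, $f_n\uparrow f$) that is needed anyway in the later sections for the pointwise limits of the eigenvectors $v_n$. Do make explicit, as the paper does implicitly, why $\|A_{Y_n}\|\uparrow\|A_Y\|$ if you ever fall back on the truncation route; in your own scheme the only point worth spelling out is that $1=\|S(\l_0)\|\in\s(S(\l_0))$ because $S(\l_0)=CC^{*}$ is a positive selfadjoint operator, which you do state.
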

\begin{proof}
By Lemma \ref{00}, \eqref{sc} has one solution, necessarily unique, if and only if
$\lim_{\l\downarrow\|A_{\bg^{Q}}\|}f(\l)>1$ as $f(\l)$ decreases. Suppose that this is the case.
Again by Lemma \ref{00}, there exists $N$ and, for each $n>N$, a unique
$\l_n>\|A_{\bg^{Q}}\|$ such that $f_n(\l_n)=1$. By Lemma \ref{fgiii} we get
$\|A_Y\|\geq\|A_{Y_n}\|>\|A_{\bg^{Q}}\|$. Suppose now that
$\l_*:=\|A_Y\|>\|A_{\bg^{Q}}\|$, $\l_n:=\|A_{Y_n}\|\uparrow\l_*$ and, again by
Lemma \ref{fgiii}, $f_n(\l_n)=1$ for all the $n$ large enough. By taking into account the Dini Theorem (cf. \cite{Ro}, Theorem 9.11), and Lemma \ref{00}, we get $f(\l_*)=\lim_nf_n(\l_n)=1$, and the proof follows.
\end{proof}
As a useful consequence, we get
\begin{Cor}
With the above notations, if $\|A_Y\|>\|A_{\bg^{Q}}\|$, then
$\{\l\in\bc\mid|\l|>\|A_Y\|\}\subset{\rm P}(A_Y)$, and
\begin{equation}
\label{koro}
R_{A_Y}(\l)=R_{A_{\bg^{Q}}}(\l)
+ R_{A_{\bg^{Q}}}(\l)\left(\idd_{\ell^2(S)}-S(\l)\right)^{-1}P_{\ell^2(S)}R_{A_{\bg^{Q}}}(\l)\,.
\end{equation}
\end{Cor}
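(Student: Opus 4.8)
The plan is to separate the two assertions: first locate the resolvent set, and then derive the explicit formula, the latter hinging on the invertibility of $\idd_{\ell^2(S)}-S(\l)$ for all $\l$ with $|\l|>\|A_Y\|$. For the resolvent set I would simply use that $A_Y=A_{\bg^{Q}}+P_{\ell^2(S)}$ is a bounded selfadjoint operator, since adding self loops amounts to a real diagonal perturbation. Hence $\s(A_Y)\subset\br$ and the spectral radius equals $\|A_Y\|$, so $\s(A_Y)\subset[-\|A_Y\|,\|A_Y\|]$ and every $\l$ with $|\l|>\|A_Y\|$ lies in ${\rm P}(A_Y)$. Observe also that $|\l|>\|A_Y\|\geq\|A_{\bg^{Q}}\|$, so that $R_{A_{\bg^{Q}}}(\l)$ is defined on the same region.

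The core of the argument, and the step I expect to be the main obstacle, is to show that $\idd_{\ell^2(S)}-S(\l)$ is invertible for every complex $\l$ with $|\l|>\|A_Y\|$, because Lemma~\ref{00} and Theorem~\ref{es} provide information only along the real axis. I would reduce the complex case to the real one by an entrywise domination. For $|\l|>\|A_{\bg^{Q}}\|$ the Neumann expansion $R_{A_{\bg^{Q}}}(\l)=\sum_{k\geq0}\l^{-(k+1)}A_{\bg^{Q}}^{k}$ converges, and since $A_{\bg^{Q}}$ has nonnegative entries one gets $|(R_{A_{\bg^{Q}}}(\l))_{xy}|\leq(R_{A_{\bg^{Q}}}(|\l|))_{xy}$ for all $x,y$. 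As $S(\l)$ is just the compression of $R_{A_{\bg^{Q}}}(\l)$ to $\ell^2(S)$, the same bound holds entrywise for $S(\l)$ against $S(|\l|)$, and the elementary estimate $|\langle S(\l)u,v\rangle|\leq\langle S(|\l|)|u|,|v|\rangle$ yields $\|S(\l)\|\leq\|S(|\l|)\|=f(|\l|)$. Now $|\l|>\|A_Y\|$ is real, so Theorem~\ref{es} gives $f(\|A_Y\|)=1$ and the strict monotonicity in Lemma~\ref{00}(ii) forces $f(|\l|)<1$. Hence $\|S(\l)\|<1$, and $\idd_{\ell^2(S)}-S(\l)$ is invertible via its Neumann series.

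Finally I would assemble \eqref{koro} from the second resolvent identity. Writing $D=P_{\ell^2(S)}$ for the perturbation, on ${\rm P}(A_{\bg^{Q}})\cap{\rm P}(A_Y)$ one has $R_{A_Y}(\l)=R_{A_{\bg^{Q}}}(\l)+R_{A_{\bg^{Q}}}(\l)\,D\,R_{A_Y}(\l)$. Compressing this identity to $\ell^2(S)$ on the left and using $P_{\ell^2(S)}R_{A_{\bg^{Q}}}(\l)P_{\ell^2(S)}\lceil_{\ell^2(S)}=S(\l)$ gives $(\idd_{\ell^2(S)}-S(\l))\,P_{\ell^2(S)}R_{A_Y}(\l)=P_{\ell^2(S)}R_{A_{\bg^{Q}}}(\l)$, whence $P_{\ell^2(S)}R_{A_Y}(\l)=(\idd_{\ell^2(S)}-S(\l))^{-1}P_{\ell^2(S)}R_{A_{\bg^{Q}}}(\l)$ by the invertibility just established. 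Substituting this back into $R_{A_Y}(\l)=R_{A_{\bg^{Q}}}(\l)+R_{A_{\bg^{Q}}}(\l)P_{\ell^2(S)}R_{A_Y}(\l)$ produces exactly \eqref{koro}. The only delicate points are the bookkeeping of the restrictions to $\ell^2(S)$ and checking that the middle compression $P_{\ell^2(S)}R_{A_{\bg^{Q}}}(\l)P_{\ell^2(S)}$ genuinely acts as $S(\l)$ on the range of $P_{\ell^2(S)}R_{A_Y}(\l)$.
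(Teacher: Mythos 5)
Your argument is correct, and it is organized differently from the paper's. The paper's proof runs in the opposite direction: it observes that \emph{whenever} $\idd_{\ell^2(S)}-S(\l)$ is invertible, a direct computation shows the right-hand side of \eqref{koro} is a two-sided inverse of $\l\idd-A_Y$, so that membership of $\l$ in ${\rm P}(A_Y)$ is obtained as a \emph{consequence} of the formula rather than established beforehand; it then invokes Theorem \ref{es} for the invertibility of $\idd_{\ell^2(S)}-S(\l)$ when $|\l|>\|A_Y\|$. You instead get $\l\in{\rm P}(A_Y)$ for free from selfadjointness and then solve for $R_{A_Y}(\l)$ via the second resolvent identity, which replaces the "straightforward calculation" by an algebraic derivation of the formula. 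The more substantive difference is that you make explicit the step the paper leaves implicit: Theorem \ref{es} and Lemma \ref{00} only control $f(\l)=\|S(\l)\|$ for \emph{real} $\l>\|A_{\bg^{Q}}\|$, and your entrywise domination $|S(\l)_{xy}|\leq S(|\l|)_{xy}$ (via the Neumann series of $R_{A_{\bg^{Q}}}(\l)$ and positivity of the entries of $A_{\bg^{Q}}^k$) is exactly what is needed to conclude $\|S(\l)\|\leq f(|\l|)<1$ for complex $\l$ with $|\l|>\|A_Y\|$. So your route buys a complete justification of the complex case, at the price of using selfadjointness of $A_Y$ (which holds here since the perturbation is a real diagonal one), whereas the paper's direct-verification route would in principle work for non-selfadjoint perturbations as well, provided the invertibility of $\idd_{\ell^2(S)}-S(\l)$ is supplied.
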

\begin{proof}
Suppose that, for some $\l\in\bc$, $\idd_{\ell^2(S)}-S(\l)$ is invertible in $\cb(\ell^2(S))$. Then
a straightforward calculation shows that the operator on the l.h.s. of \eqref{koro} provides the left and right inverse of $\l\idd_{\ell^2(\bg^{Q})}-A_Y$. Namely, it leads to the resolvent of $A_Y$ for such a $\l$. The proof follows by Theorem \ref{es} as $\idd_{\ell^2(S)}-S(\l)$ is invertible for those
$\l\in\bc$ such that $|\l|>\|A_Y\|$.
\end{proof}
The main cases of interest in the present paper are $S\sim\bg^{q}$, $1<q\leq Q$. Then \eqref{sc} becomes
$$
\|P_{\ell^2(\bg^{q})}R_{A_{\bg^{Q}}}(\l)P_{\ell^2(\bg^{q})}\|=1\,,
$$
and allows us to determine whether $\|A_{\bg^{Q,q}}\|>\|A_{\bg^{Q}}\|$. Another case of interest is
$S\sim\bn$. When $q=2$, $S\sim\bz$. Thus, the secular equation \eqref{sc} allows us to study the situation when $S\sim\bn$ as well, see \eqref{loo1}.

Let $d$ be the standard distance on the Cayley Tree of order $Q$. The following walk generating function
\begin{equation}
\label{ww}
W_{x,y}(\xi)=\bigg(\frac{1-\sqrt{1-4(Q-1)\xi^2}}{2(Q-1)\xi}\bigg)^{d(x,y)}\frac{2(Q-1)}
{Q-2+Q\sqrt{1-4(Q-1)\xi^2}}\,,
\end{equation}
reported in Section 7.D of \cite{MW}, is crucial for our computations, see below.

To have an idea of what is happening, we consider the following
simple example $X$ obtained by perturbing $\bg^{Q}$ by a self loop
based on the root $0$, see Fig \ref{Fig11}.
 \begin{figure}[ht]
     \centering
     \psfig{file=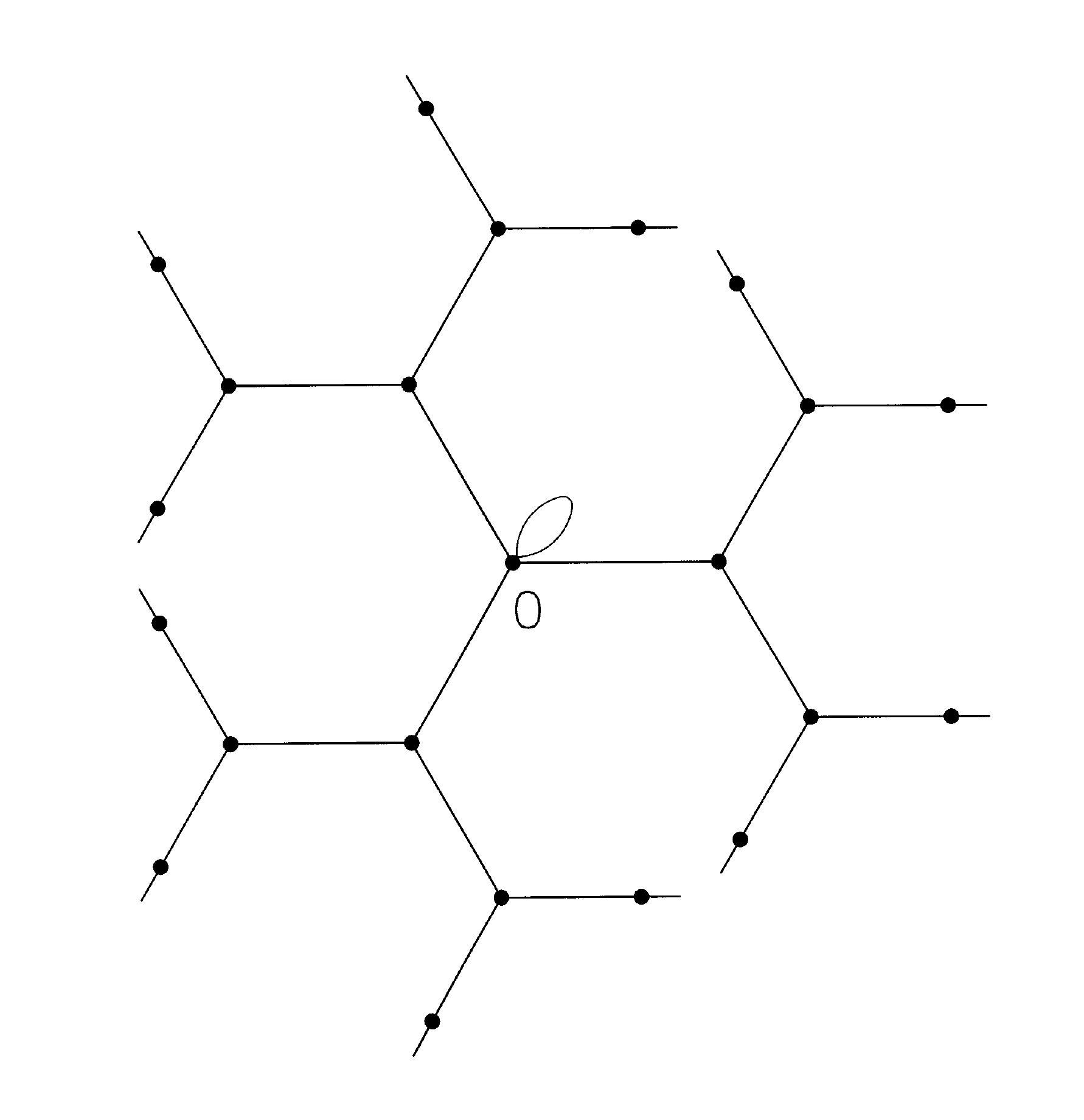,height=2.2in} \quad \psfig{file=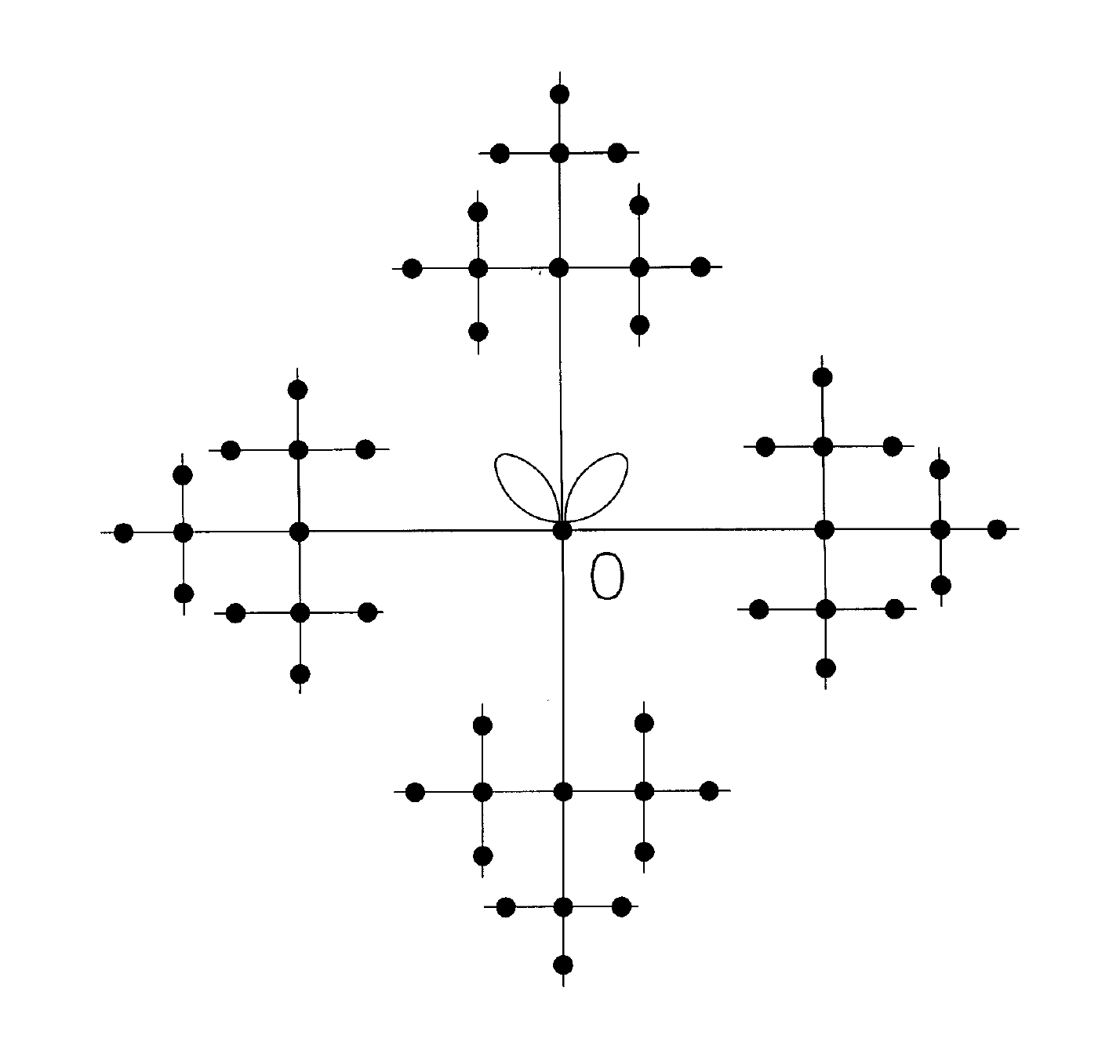,height=2.5in}
     \caption{The perturbation of $\bg^3$ and $\bg^4$ by self loops.}
     \label{Fig11}
     \end{figure}
With $\xi:=1/\l$, the secular equation
\eqref{sc} becomes
\begin{equation}
\label{cons1}
\xi W_{0,0}(\xi)=1
\end{equation}
by (7.6) of \cite{MW}. By taking into account
$$
\frac{Q-2}{2(Q-1)}<\xi<\frac{1}{2\sqrt{Q-1}}\,,
$$
we show that \eqref{cons1}, has the solution, necessarily unique,
$$
\xi=\frac{(1+\sqrt{5})Q-2}{2(Q^2+Q-1)}
$$
only if $Q=3$.\footnote{The case when we add a loop to the root of $\bg^2\sim\bz$ is already treated in \cite{FGI1}, Section 8.} In this case,
$$
\|A_X\|=\frac{22}{1+3\sqrt{5}}>\|A_{\bg^{Q}}\|=2\sqrt2\,.
$$
Other objects of interest are the adjacency $A_X$ and its Perron Frobenius eigenvector $v$ for the perturbed graph $X$. We get by \eqref{koro}, \eqref{ww},
$$
R_{A_X}(\l)
=R_{A_{\bg^{3}}}(\l)\bigg(\idd+\bigg(1-\frac1{\l}W_{0,0}\bigg(\frac1{\l}\bigg)\bigg)^{-1}
P_{\d_0}R_{A_{\bg^{3}}}(\l)\bigg)\,,
$$
where
$$
W_{0,0}(\xi)=\frac4{1+3\sqrt{1-8\xi^2}}\,.
$$
By \eqref{zeta}, we have for the Perron Frobenius eigenvector,
$$
v=R_{A_{\bg^{Q}}}(\|A_{X}\|)\d_0\,.
$$
As $v\in\ell^2(X)$, $A_X$ is recurrent. This implies by \cite{S}, Theorem 6.2, that $v$ is the unique (up to a multiplicative scalar) Perron Frobenius eigenvector for $A_X$.

In the cases $Q>3$ the secular equation \eqref{cons1} has no solution greater than $2\sqrt{Q-1}$. Then $\|A_X\|=\|A_{\bg^{Q}}\|$, that is,
the perturbation is too small to change the norm of the adjacency
operator, and to create an hidden zone of the spectrum near zero of
the Hamiltonian. In these cases, it is easy to show that it is
enough to add a large enough number of self loops on the chosen root
(cf. Fig. \ref{Fig11}) in order to increase the norm of the
adjacency and then to obtain the hidden spectrum. Indeed, by the
same computations as in Proposition \ref{norm}, we get
$$
n=\left[\frac{Q-2}{\sqrt{Q-1}}\right]+1\,,
$$
where $n$ is the minimum number of the self loops to add to the tree of order $Q$ to have the hidden spectrum.
This means that, in order to obtain the hidden spectrum also for $4\leq Q\leq6$, it is enough to add just two loops to the chosen root of $\bg^Q$, and so on.

We end the present section by remarking the following very
surprising facts. It is enough to add a small number of edges to
$\bg^{Q}$ in order to change dramatically the spectral properties
near $\|A_X\|$, of the adjacency operator $A_X$ of the perturbed
graph $X$, even if the graph under consideration grows exponentially. For example, the perturbed adjacency could exhibit hidden
spectrum. In addition, it could become recurrent and finally the
shape of the Perron Frobenius eigenvector changes dramatically. As
explained above and in accordance with the results in \cite{FGI1},
the perturbed network could exhibit very different properties
compared with the original unperturbed one.

\section{the perturbed trees along a subtree isomorphic to $\bz$}
\label{z}

The present section is devoted to the network $\bg^{Q,2}$ obtained by perturbing a homogeneous tree of order $Q$ along a path isomorphic to $\bz$, see Fig. \ref{Fig13}. In this case, the subset 
$S$ appearing in Theorem \ref{es} is nothing but $\bg^2\sim\bz$. We simply write 
$\bz=S\subset\bg^{Q,2}$.
 \begin{figure}[ht]
     \centering
     \psfig{file=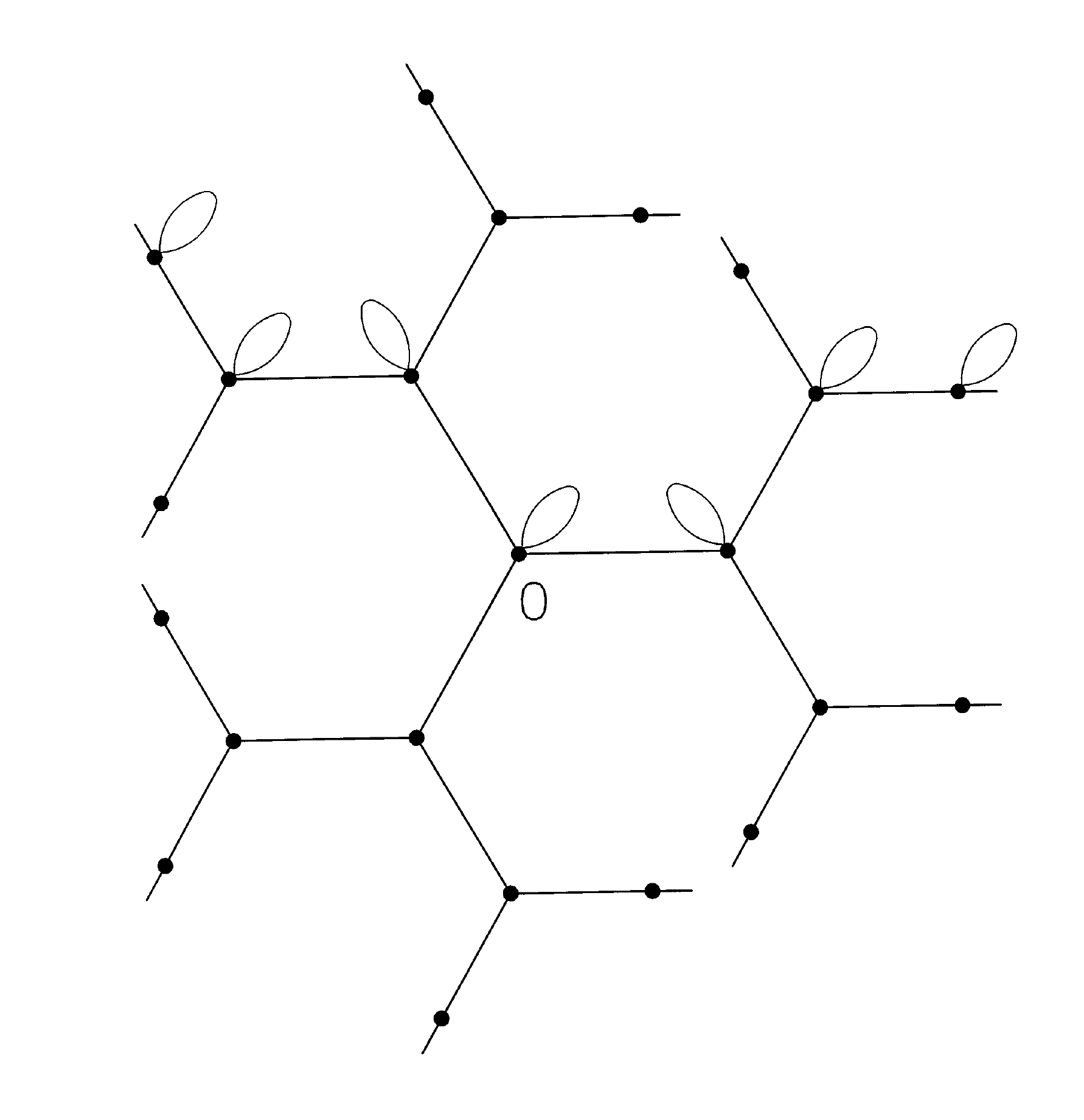,height=2.2in} \quad \psfig{file=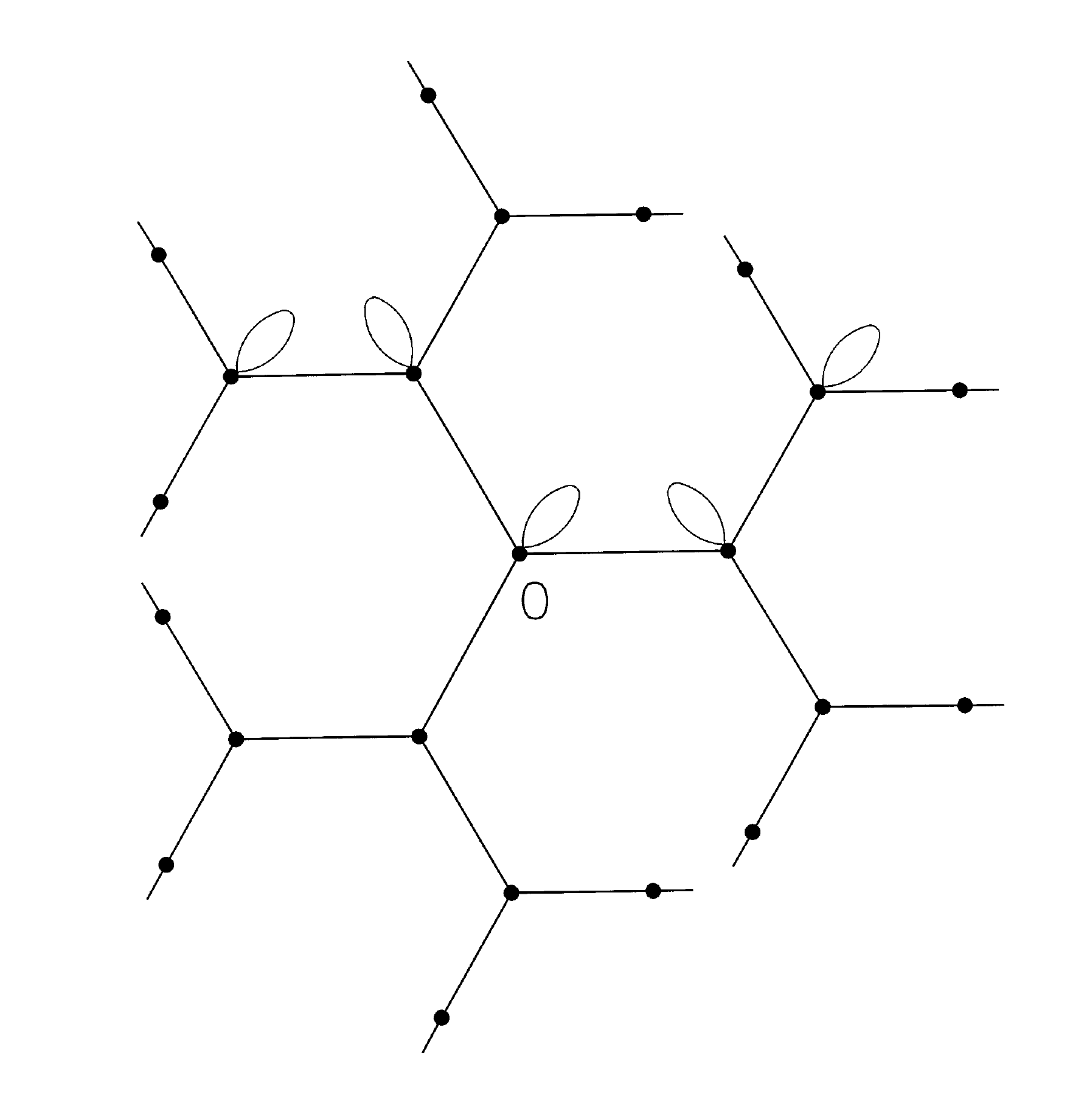,height=2.2in}
     \caption{The networks $\bg^{3,2}$ and $X_2$.}
     \label{Fig13}
     \end{figure}
To this end, we consider for $a<1$, the operator $T_a$ acting on $\ell^2(\bz)$, and defined as
\begin{equation*}
T_av:=f_a*v\,.
\end{equation*}
Here, $f_a(x):=a^{d(x,0)}$, $d$ being the standard distance on the tree $\bg^{Q}$, and $0$ any fixed root on $\bz\subset\bg^{Q}$. By using the Fourier transform, $\widehat{T_a}$ becomes the multiplication operator on
$L^2\big(\bt,\frac{\di\th}{2\pi}\big)$ by the Poisson kernel
\begin{equation}
\label{cov1}
P_a(e^{\imath\th})=\frac{1-a^2}{1-a(e^{\imath\th}+e^{-\imath\th})+a^2}\equiv\frac{1-a^2}{1-2a\cos\th+a^2}\,,
\end{equation}
see e.g. \cite{R}, Section 11.2.
Denote
\begin{equation}
\label{000}
a(\l):=\frac{1-\sqrt{1-\frac{4(Q-1)}
{\l^2}}}{\frac{2(Q-1)}{\l}}\,,
\end{equation}
\begin{equation}
\label{333}
\m(\l):=\frac{Q-2+Q\sqrt{1-\frac{4(Q-1)}{\l^2}}}
{\frac{2(Q-1)}{\l}}\,.
\end{equation}
By taking into account (7.6) in \cite{MW}, the secular equation \eqref{sc} becomes in this case
$\|T_{a(\l)}\|=\m(\l)$. This means by \eqref{cov1},
\begin{equation}
\label{1aa}
\frac{1+a(\l)}{1-a(\l)}=\m(\l)
\end{equation}
as $\|T_{a(\l)}\|=P_{a(\l)}(1)$. Consider $\l>\|A_{\bg^{Q}}\|\equiv2\sqrt{Q-1}$. In such a range, the secular equation \eqref{1aa} has no solution if $Q>7$. On the other hand, if $2\leq Q\leq7$,
\eqref{1aa} has a solution (necessarily unique) given by
\begin{equation}
\label{11aa}
\l=\frac{3-\sqrt{5}}{2}Q+\sqrt{5}\,.
\end{equation}
The reader is referred to Proposition \ref{norm} for the general case $2\leq q\leq Q$.

The main properties of the resolvent of the adjacency of $\bg^{Q,2}$
useful in the sequel, are summarized in the following
\begin{Thm}
\label{ress}
Let $3\leq Q\leq7$, and $\l>\l_*\equiv\|A_{\bg^{Q,2}}\|$ given in \eqref{11aa}. Then we have
\begin{equation}
\label{ress1}
R_{A_{\bg^{Q,2}}}(\l)=R_{A_{\bg^{Q}}}(\l)\bigg[\idd_{\ell^2(\bg^{Q})} +P_{\ell^2(\bz)}
\bigg(\idd_{\ell^2(\bz)}-\frac1{\l}W\bigg(\frac1{\l}\bigg)\bigg)^{-1}P_{\ell^2(\bz)}R_{A_{\bg^{Q}}}(\l)\bigg]\,,
\end{equation}
where $W$ is the operator acting on $\ell^2(\bz)$ whose matrix elements are given by \eqref{ww}.
In addition, $\bg^{Q,2}$ is recurrent.
\end{Thm}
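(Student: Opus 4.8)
The plan is to obtain \eqref{ress1} as a direct specialization of the Corollary following Theorem \ref{es}, and then to read recurrence off the diagonal matrix element of the resolvent. First I would note that $\bg^{Q,2}$ arises from $\bg^{Q}$ by adding exactly one self loop at each vertex of $S=\bz$; since a self loop increases $A_{xx}$ by one, the perturbation is the diagonal operator $D=P_{\ell^2(\bz)}$, so $\overline{\car(D)}=\ell^2(\bz)$. With this choice the operator $S(\l)$ of Lemma \ref{fgiii} reduces to $P_{\ell^2(\bz)}R_{A_{\bg^{Q}}}(\l)P_{\ell^2(\bz)}$, whose entries are the diagonal block of the tree resolvent. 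By (7.6) of \cite{MW} these entries equal $\frac1\l W_{x,y}(1/\l)$ for $x,y\in\bz$, i.e.\ $S(\l)=\frac1\l W(1/\l)$ in operator form. Substituting this into \eqref{koro}, and invoking the Corollary for the invertibility of $\idd_{\ell^2(\bz)}-\frac1\l W(1/\l)$ when $\l>\l_*=\|A_{\bg^{Q,2}}\|$, yields \eqref{ress1}.

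For recurrence I would first put $\frac1\l W(1/\l)$ in a diagonalizable form. Comparing \eqref{ww} with \eqref{000} and \eqref{333}, the base of the power in $W_{x,y}$ is $a(\l)$ and the prefactor contributes $1/\m(\l)$; since $d(x,y)=|x-y|$ on $\bz$, this gives $\frac1\l W_{x,y}(1/\l)=\frac1{\m(\l)}a(\l)^{|x-y|}$, that is $\frac1\l W(1/\l)=\frac1{\m(\l)}T_{a(\l)}$. Under the Fourier transform on $\ell^2(\bz)$ this becomes multiplication by $\frac1{\m(\l)}P_{a(\l)}(e^{\imath\th})$, with $P_{a}$ the Poisson kernel \eqref{cov1}. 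Choosing the base point $0\in\bz$ and writing $g_\l:=P_{\ell^2(\bz)}R_{A_{\bg^{Q}}}(\l)\d_0$, self-adjointness of $R_{A_{\bg^{Q}}}(\l)$ turns \eqref{ress1} into
\begin{equation*}
\langle R_{A_{\bg^{Q,2}}}(\l)\d_0,\d_0\rangle=\langle R_{A_{\bg^{Q}}}(\l)\d_0,\d_0\rangle
+\int_{\bt}\frac{|\widehat{g_\l}(\th)|^2}{1-\frac1{\m(\l)}P_{a(\l)}(e^{\imath\th})}\,\frac{\di\th}{2\pi}\,,
\end{equation*}
where $\widehat{g_\l}(\th)=\frac1{\m(\l)}P_{a(\l)}(e^{\imath\th})$.

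It then remains to analyse the limit $\l\downarrow\l_*$. The first term stays finite, because $\l_*>\|A_{\bg^{Q}}\|=2\sqrt{Q-1}$ keeps $\l$ away from $\s(A_{\bg^{Q}})$. The divergence sits entirely in the integral: by the secular equation \eqref{1aa}, at $\l=\l_*$ the symbol $\frac1{\m(\l)}P_{a(\l)}(e^{\imath\th})$ reaches the value $1$ precisely at $\th=0$, where the Poisson kernel is maximal. A local expansion there gives $1-\frac1{\m(\l)}P_{a(\l)}(e^{\imath\th})\approx\eps(\l)+c\,\th^{2}$ with a fixed constant $c>0$ and $\eps(\l):=1-\frac1{\m(\l)}\frac{1+a(\l)}{1-a(\l)}\downarrow0$, while the numerator tends to $1$. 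Hence the integral behaves like $\int_{0}^{\delta}\frac{\di\th}{\eps(\l)+c\,\th^{2}}\sim\text{const}\cdot\eps(\l)^{-1/2}\to+\infty$, so that $\lim_{\l\downarrow\l_*}\langle R_{A_{\bg^{Q,2}}}(\l)\d_0,\d_0\rangle=+\infty$ and $A_{\bg^{Q,2}}$ is recurrent.

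The routine parts are the substitution into \eqref{koro} and the identification $\frac1\l W(1/\l)=\frac1{\m(\l)}T_{a(\l)}$, which are pure bookkeeping against the formulas of Section \ref{z}. The delicate point I expect to be the real obstacle is the quantitative behaviour of the integral near $\th=0$: one must verify that the denominator both vanishes at $\th=0$ in the limit and does so only to second order, producing the non integrable $\th^{-2}$ singularity responsible for the $\eps(\l)^{-1/2}$ blow up. This genuinely quadratic degeneracy is exactly what separates the recurrent one dimensional base $\bz$ from the transient trees $\bg^{q}$ with $q>2$.
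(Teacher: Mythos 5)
Your derivation of \eqref{ress1} is the same as the paper's: identify the perturbation as $D=P_{\ell^2(\bz)}$, so that $S(\l)=P_{\ell^2(\bz)}R_{A_{\bg^{Q}}}(\l)P_{\ell^2(\bz)}=\frac1\l W(1/\l)$ via (7.6) of \cite{MW}, and specialize \eqref{koro}, using Theorem \ref{es} and Lemma \ref{00} for the invertibility of $\idd_{\ell^2(\bz)}-S(\l)$ when $\l>\l_*$. For the recurrence the two arguments share the same reduction --- diagonalize $S(\l)$ by the Fourier transform on $\bz$, so that the divergent part of $\langle R_{A_{\bg^{Q,2}}}(\l)\d_0,\d_0\rangle$ is an integral over $\bt$ with denominator $\m(\l)-P_{a(\l)}(e^{\imath\th})$ --- but diverge in how they estimate that integral. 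The paper continues the Poisson kernel into the complex plane and evaluates the contour integral exactly by partial fractions, obtaining $\frac{1-a^2}{\sqrt{\D}}$ with $\D\downarrow0$ as $\l\downarrow\l_*$; you instead expand the symbol locally at $\th=0$, where by the secular equation \eqref{1aa} it touches its supremum, and show the denominator degenerates as $\eps(\l)+c\,\th^2$, giving the $\eps^{-1/2}$ blow-up. Both are correct (your quadratic expansion of $1-2a\cos\th+a^2=(1-a)^2+a\th^2+O(\th^4)$ checks out, and the positivity of the integrand for $\l>\l_*$ lets you bound the integral from below by its contribution near $\th=0$); your version is slightly less explicit but arguably more transparent, since the second-order degeneracy of the symbol at its maximum is exactly the mechanism that distinguishes the recurrent base $\bz$ from the transient bases $\bn$ and $\bg^q$, $q>2$, treated in the later sections. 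Two cosmetic remarks: your numerator $|\widehat{g_\l}(\th)|^2=(P_{a(\l)}/\m(\l))^2$ makes your integral differ from the paper's $\frac1{2\pi}\int P_a/(\m-P_a)\,\di\th$ by a quantity that stays bounded as $\l\downarrow\l_*$, so the conclusion is unaffected; and the constant $c$ in your expansion depends on $\l$ but converges to a positive limit, which is all that is needed.
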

\begin{proof}
We have shown that the secular equation \eqref{1aa} has a solution $\l_*$ (necessarily unique)
greater then $2\sqrt{Q-1}$, only if $Q\leq7$. For such cases, $\l_*$ is precisely the norm of the adjacency $A_{\bg^{Q,2}}$ of the perturbed graph. In addition, Lemma \ref{00} and Theorem
\ref{es} show that
$$
\idd_{\ell^2(\bz)}-S(\l):=P_{\ell^2(\bz)}-P_{\ell^2(\bz)}R_{A_{\bg^{Q}}}P_{\ell^2(\bz)}
$$
acting on $\ell^2(\bz)$, is invertible, provided $\l>\l_*$.
By taking account \eqref{koro} and \eqref{ww},
\eqref{ress1} gives rise the resolvent of $A_{\bg^{Q,2}}$ for such positive $\l$.

As $\l_*\equiv\|A_{\bg^{Q,2}}\|>\|A_{\bg^{Q}}\|$ if $Q\leq7$, to check the recurrence it is enough (cf. \cite{S})  to study the limit as $\l\downarrow\l_*$ of
\begin{align*}
\big\langle R_{A_{\bg^{Q,2}}}(\l)\d_0,\d_0\big\rangle&
=\big\langle S(\l)\big(\idd_{\bz}-S(\l)\big)^{-1}\d_0,\d_0\big\rangle\\
=&\frac1{2\pi}\int_0^{2\pi}\frac{P_{a(\l)}(e^{\imath\th})}{\m(\l)-P_{a(\l)}(e^{\imath\th})}\di\th\,,
\end{align*}
where $R_{A_{\bg^{Q,2}}}$ is given in \eqref{ress1}, and $\m(\l)$, $a(\l)$ are given by
\eqref{333} and \eqref{000}, respectively. We write $a$, $\m$ for $a(\l)$, $\m(\l)$, $\l\geq\l_*$, respectively.

We pass to the complex plane by using the analytic continuation of $P_{a}$, getting
\begin{equation}
\label{ress2}
\big\langle R_{A_{\bg^{Q,2}}}(\l)\d_0,\d_0\big\rangle
=\frac{a^2-1}{2\pi\imath}\oint\frac{\di z}
{a\m z^2-[(1+a^2)\m-(1-a^2)]z+a\m}\,,
\end{equation}
where both $a$ and $\m$ are functions of $\l$ as before. It is straightforward to show that if
$\l>\l_*$ then $\m>\frac{1+a}{1-a}$. In addition, $\l>\l_*$ implies
$$
\D:=[(1+a^2)\m-(1-a^2)]^2-4\m^2a^2>0\,.
$$
The last is zero if $\l=\l_*$, or equivalently if $\m=\frac{1+a}{1-a}$. If $\l>\l_*$ is sufficiently close to $\l_*$, \eqref{ress2} becomes
$$
\big\langle R_{A_{\bg^{Q,2}}}(\l)\d_0,\d_0\big\rangle=\frac{a^2-1}{a\m}\frac1{2\pi\imath}
\oint\frac{\di z}{(z-z_+)(z-z_-)}\,,
$$
where
\begin{equation}
\label{zed}
z_{\pm}:=\frac{(1+a^2)\m-(1-a^2)\pm\sqrt{\D}}{2a\m}
\end{equation}
are close to $1$, with $z_-<1<z_+$. Thus,
$$
\big\langle R_{A_{\bg^{Q,2}}}(\l)\d_0,\d_0\big\rangle=\frac{1-a^2}{a\m(z_+-z_-)}
=\frac{1-a^2}{\sqrt{\D}}\to+\infty
$$
if $\l\downarrow\l_*$, that is $R_{A_{\bg^{Q,2}}}$ is recurrent.
\end{proof}

We end the present section by describing the (generalized)
Perron--Frobenius eigenvector on $\bg^{Q,2}$. As $P_a$ is recurrent,
the uniform weight $v:=1$ identically on $\bz$, is the unique (up to
a constant), Perron--Frobenius eigenvector of $T_a$.
\begin{Lemma}
\label{loo}
Let $S$ be a connected subgraph of $\bg^{Q}$ and $x\in\bg^{Q}$. Then there exist a unique $y(x)\in S$ such that $d(x,S)=d(x,y(x))$.
\end{Lemma}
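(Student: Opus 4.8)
The plan is to exploit the defining feature of a tree, namely that between any two vertices of $\bg^{Q}$ there is a unique geodesic, and that for any three vertices there is a unique \emph{median} vertex lying simultaneously on all three pairwise geodesics. Existence of $y(x)$ is immediate and requires no tree structure: the set of distances $\{d(x,y)\mid y\in S\}$ is a nonempty collection of nonnegative integers, hence attains a minimum, which is by definition $d(x,S)$; any minimizer is a candidate for $y(x)$. So the whole content of the lemma is the uniqueness assertion.

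For uniqueness I would argue by contradiction on the assumption that two distinct vertices $y_1,y_2\in S$ both realize $d(x,S)$. Since $\bg^{Q}$ is a tree, let $m$ be the median of the triple $x,y_1,y_2$, i.e. the unique vertex satisfying $d(x,y_i)=d(x,m)+d(m,y_i)$ for $i=1,2$ together with $d(y_1,y_2)=d(y_1,m)+d(m,y_2)$. In particular $m$ lies on the geodesic joining $y_1$ and $y_2$. The crucial point is that this geodesic is contained in $S$: because $S$ is connected there is a path inside $S$ from $y_1$ to $y_2$, and because $\bg^{Q}$ is acyclic that path must coincide with the unique geodesic $[y_1,y_2]$ in $\bg^{Q}$. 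Hence $m\in S$.

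Once $m\in S$ is secured, the conclusion is a one-line computation. From $d(x,y_1)=d(x,m)+d(m,y_1)$ we get $d(x,m)\leq d(x,y_1)=d(x,S)$, while $m\in S$ forces $d(x,m)\geq d(x,S)$. Therefore $d(x,m)=d(x,S)$ and consequently $d(m,y_1)=0$, i.e. $m=y_1$; the symmetric argument gives $m=y_2$, whence $y_1=y_2$, a contradiction.

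The only step requiring genuine care is the claim that the median $m$ belongs to $S$, and this is where I expect the main (though still elementary) obstacle to lie: it rests on the two tree facts that the median sits on $[y_1,y_2]$ and that in a tree the unique path between two vertices of a connected subgraph stays inside that subgraph. Both are consequences of acyclicity of $\bg^{Q}$. I would either invoke the standard properties of Cayley Trees collected in \cite{Z}, or, to keep the argument self-contained, construct the median directly as the last common vertex of the geodesics $[x,y_1]$ and $[x,y_2]$ emanating from $x$ and verify the three additivity identities by hand.
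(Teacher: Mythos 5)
Your proof is correct and follows essentially the same route as the paper's: existence is obtained by minimizing over a nonempty set of nonnegative integers, and uniqueness from the acyclicity of $\bg^{Q}$ combined with the connectedness of $S$. The paper disposes of uniqueness in one line by asserting that two distinct minimizers would force a loop in the tree; your median argument is just a careful, fully rigorous rendering of that same contradiction.
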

\begin{proof}
By a standard compactness argument, $d(x,y)$, $y\in S$ attains its
minimum. Suppose that such a minimum is not unique. As $S$ is connected, there exists a loop in the tree $\bg^{Q}$ which is a contradiction.
\end{proof}
Let now $v_n$ be the normalizable Perron Frobenius eigenvector
 of the adjacency operator of the graph $X_n$ (cf. Fig. \ref{Fig13})
obtained by perturbing $\bg^{Q}$ along a segment $S_n$ made of $2n+1$ points centered in the root $0$, which exists by Lemma \ref{fgiii}. Normalize such a vector by putting 
$v_n(0)=1$.
\begin{Thm}
If $Q\leq 7$ then the Perron Frobenius eigenvector for
$A_{\bg^{Q,2}}$ is unique up to a multiplicative constant, and it is given by a multiple of
\begin{equation}
\label{pfz}
v(x)=a(\l_*)^{d(x,\bz)}\,,
\end{equation}
where $a(\l_*)$ is given by \eqref{000} and $\l_*$ fulfills \eqref{11aa} with $S=\bz$.

With the above notations, $v$ is the pointwise limit of the Perron Frobenius eigenvectors $v_n$.
\end{Thm}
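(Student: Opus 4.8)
The plan is to treat the three assertions---uniqueness, the closed form of $v$, and the identification of $v$ with $\lim_n v_n$---separately. For uniqueness I would simply appeal to what is already established: Theorem \ref{ress} shows that $A_{\bg^{Q,2}}$ is recurrent in the relevant range $Q\le7$, and for a recurrent positive preserving operator the generalized Perron--Frobenius eigenvector is unique up to a positive scalar by \cite{S} (the same conclusion follows from the tree--likeness of $\bg^{Q,2}$ via \cite{PiWo}). Hence it is enough to produce a single positive solution of the eigenvalue equation and check that it has the stated shape; by uniqueness it will be \emph{the} Perron--Frobenius weight.

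For the closed form I would run the eigenvector correspondence \eqref{zeta} of Lemma \ref{fgiii} at the top eigenvalue $\l_*=\|A_{\bg^{Q,2}}\|$. Since the perturbation adds one self loop at each site of $\bz$, the perturbation operator restricted to its range is the projection $P_{\ell^2(\bz)}$, so $w:=P_{\ell^2(\bz)}v$ must be fixed by $S(\l_*)=P_{\ell^2(\bz)}R_{A_{\bg^Q}}(\l_*)P_{\ell^2(\bz)}$ and $v=R_{A_{\bg^Q}}(\l_*)w$. The crucial computation is to read off from $(7.6)$ of \cite{MW} and \eqref{ww} that the resolvent kernel equals $\tfrac1\l W_{x,y}(\tfrac1\l)=\m(\l)^{-1}a(\l)^{d(x,y)}$, with $a(\l)$, $\m(\l)$ as in \eqref{000}--\eqref{333}; since $d(x,y)=|x-y|$ on $\bz$, this identifies $S(\l_*)=\m(\l_*)^{-1}T_{a(\l_*)}$, whose top (generalized, i.e. non $\ell^2$) eigenvector is the uniform weight $w\equiv1$ on $\bz$, exactly as recorded before the statement. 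It then remains to evaluate $v(x)=\m(\l_*)^{-1}\sum_{z\in\bz}a(\l_*)^{d(x,z)}$. Here Lemma \ref{loo} provides the unique nearest point $y(x)\in\bz$, through which every geodesic from $x$ to $\bz$ must pass, so that $d(x,z)=d(x,\bz)+|y(x)-z|$ for $z\in\bz$; factoring $a(\l_*)^{d(x,\bz)}$ out and summing the two--sided geometric series gives $\sum_{z\in\bz}a(\l_*)^{|y(x)-z|}=\tfrac{1+a(\l_*)}{1-a(\l_*)}=\m(\l_*)$, the last equality being the secular equation \eqref{1aa} at $\l_*$. The two occurrences of $\m(\l_*)$ cancel and leave $v(x)=a(\l_*)^{d(x,\bz)}$.

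For the last assertion I would start from the finite data furnished by Theorem \ref{es}: each $v_n=R_{A_{\bg^Q}}(\l_n)w_n$ is an honest $\ell^2$ eigenvector (here Lemma \ref{fgiii} applies literally, $S_n$ being finite), with $\l_n=\|A_{X_n}\|\uparrow\l_*$ and $w_n$ the Perron eigenvector of the truncation $S_n(\l_n)=\m(\l_n)^{-1}(T_{a(\l_n)})\lceil_{\ell^2(S_n)}$, normalized so that $v_n(0)=1$. Writing, exactly as above, $v_n(x)=\m(\l_n)^{-1}\sum_{z\in S_n}a(\l_n)^{d(x,z)}w_n(z)$ and factoring the distance by Lemma \ref{loo} (valid once $n$ is large enough that $y(x)\in S_n$), I would pass to the limit. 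Continuity of \eqref{000}--\eqref{333} gives $a(\l_n)\to a(\l_*)$ and $\m(\l_n)\to\m(\l_*)$, and the remaining point is that the normalized Perron eigenvectors $w_n$ of the truncated convolutions converge pointwise to the constant eigenvector of $T_{a(\l_*)}$. This spectral--convergence statement for truncated Toeplitz operators with the positively peaked symbol $P_{a(\l_*)}$, together with the uniform geometric tail bound in $z$ (available because $a(\l_*)<1$) that legitimizes exchanging the limit with the sum, is the main obstacle; once it is settled, a dominated convergence argument yields $v_n(x)\to a(\l_*)^{d(x,\bz)}=v(x)$ for each $x$, completing the proof.
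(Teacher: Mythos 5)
Your handling of uniqueness and of the closed form is essentially the paper's: uniqueness follows from recurrence (Theorem \ref{ress}) plus \cite{S}, Theorem 6.2, and your direct verification of $v(x)=a(\l_*)^{d(x,\bz)}$ --- reading the resolvent kernel of $A_{\bg^{Q}}$ as $\m(\l)^{-1}a(\l)^{d(x,y)}$, splitting $d(x,z)=d(x,\bz)+|y(x)-z|$ via Lemma \ref{loo}, and summing the two--sided geometric series against the secular equation \eqref{1aa} --- is correct and, if anything, more explicit than the paper, which instead writes $v_n(x)=a(\l_n)^{d(x,y_n(x))}w_n(y_n(x))$ and identifies $v\lceil_\bz$ afterwards. (One small caveat: Lemma \ref{fgiii} is stated for $|\car(D)|<+\infty$, so at $S=\bz$ with $w\equiv1\notin\ell^2(\bz)$ the correspondence \eqref{zeta} is only formal; you should say explicitly that positivity and summability of the kernel for $a(\l_*)<1$ let you check the eigenvalue equation $A_{\bg^{Q,2}}v=\l_*v$ pointwise.)

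The genuine gap is exactly where you locate it: the pointwise convergence $w_n\to1$ of the normalized Perron--Frobenius eigenvectors of the truncations $P_{\ell^2(S_n)}T_{a(\l_n)}P_{\ell^2(S_n)}$. You call this ``the main obstacle'' and leave it unresolved, but it is the whole content of the last assertion of the theorem. The paper closes it with a short soft argument: by the Fatou Lemma, any pointwise limit $v$ of the $w_n$ satisfies $v(x)\ge\|T_{a(\l_*)}\|^{-1}\sum_{y\in\bz}[T_{a(\l_*)}]_{x,y}v(y)$, i.e.\ $v\lceil_\bz$ is a nonnegative subinvariant weight for the recurrent operator $T_{a(\l_*)}$, and such weights are invariant and unique up to a scalar (\cite{S}, Theorem 6.2), hence uniform; the normalization $w_n(0)=1$ then forces the constant to be $1$. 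An alternative, fully explicit route is the triangular recursion of the type \eqref{iiff}--\eqref{iiffa} used in Theorems \ref{czz} and \ref{pftr}, which the paper points to in a footnote. Separately, the dominated--convergence and ``uniform geometric tail bound'' apparatus you propose is unnecessary: once $n$ is large enough that $y(x)\in S_n$, the finite--volume eigenvalue equation $P_{\ell^2(S_n)}T_{a(\l_n)}P_{\ell^2(S_n)}w_n=\m(\l_n)w_n$ collapses your sum exactly to $v_n(x)=a(\l_n)^{d(x,y(x))}w_n(y(x))$, so the convergence of $v_n(x)$ reduces to $a(\l_n)\downarrow a(\l_*)$ and $w_n(y(x))\to1$, with no interchange of limits to justify.
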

\begin{proof}
As $A_{\bg^{Q,2}}$ is recurrent (cf. Theorem \ref{ress}), the Perron Frobenius eigenvector
is unique, see \cite{S}, Theorem 6.2. Let $v$ be such a Perron Frobenius eigenvector, normalized as $v(0)=1$. By \eqref{zeta}, \eqref{ww}, and Lemma
\ref{loo}, the Perron Frobenius eigenvector $v_n$ described above is given by
$$
v_n(x):=a(\l_n)^{d(x,y_n(x))}w_n(y_n(x)).
$$
Here, $a(\l_n)$ is given by \eqref{000}, $\l_n$ fulfills the secular equation \eqref{sc} with $S=S_n$, and finally $w_n$ is the Perron Frobenius eigenvector of
$P_{\ell^2(S_n)}T_{a(\l_n)}P_{\ell^2(S_n)}$, extended to $0$ outside $S_n\subset\bz$ and normalized such that
$w_n(0)=1$. As $d(x,y_n(x))$ converges pointwise to $d(x,\bz)$ and $\bg^{Q,2}$ is recurrent (which implies $w_n(x)$ converges pointwise to $v(x)$ whenever $x\in S\sim\bz$), it is enough to show that
$\lim_nw_n(x)=1$ pointwise for $x$ in the subgraph $S$ of $\bg^{Q}$ isomorphic to $\bz$, supporting the perturbation.\footnote{See Theorem \ref{pftr} for an alternative proof of this part.} By the Fatou Lemma we get for $x\in S$,
\begin{align*}
v(x)=\lim_nw_n(x)=\lim_n\bigg(&\|T_{a(\l_n)}\|^{-1}\sum_{|y|\leq n}[T_{a(\l_n)}]_{x,y}w_n(y)\bigg)\\
\geq&\frac{1-a(\l_*)}{1+a(\l_*)}\sum_{y\in\bz}[T_{a(\l_*)}]_{x,y}v(y)\,.
\end{align*}
This means that $v$, restricted to (the subgraph isomorphic to) $\bz$ is a subinvariant weight for
$T_{a(\l_*)}$, which is unique (up to a multiple) and equal to the uniform distribution, see \cite{S}, Theorem 6.2.
\end{proof}

\section{the perturbed trees along a subtree isomorphic to $\bn$}

In the present section we consider the network $\bh^{Q}$ obtained by perturbing a homogeneous tree of order $Q$ along a path isomorphic to $\bn$, see Fig. \ref{Fig14}. In this case $S\sim\bn$. As before, we denote such a subgraph $S$ directly by $\bn$.
 \begin{figure}[ht]
     \centering
     \psfig{file=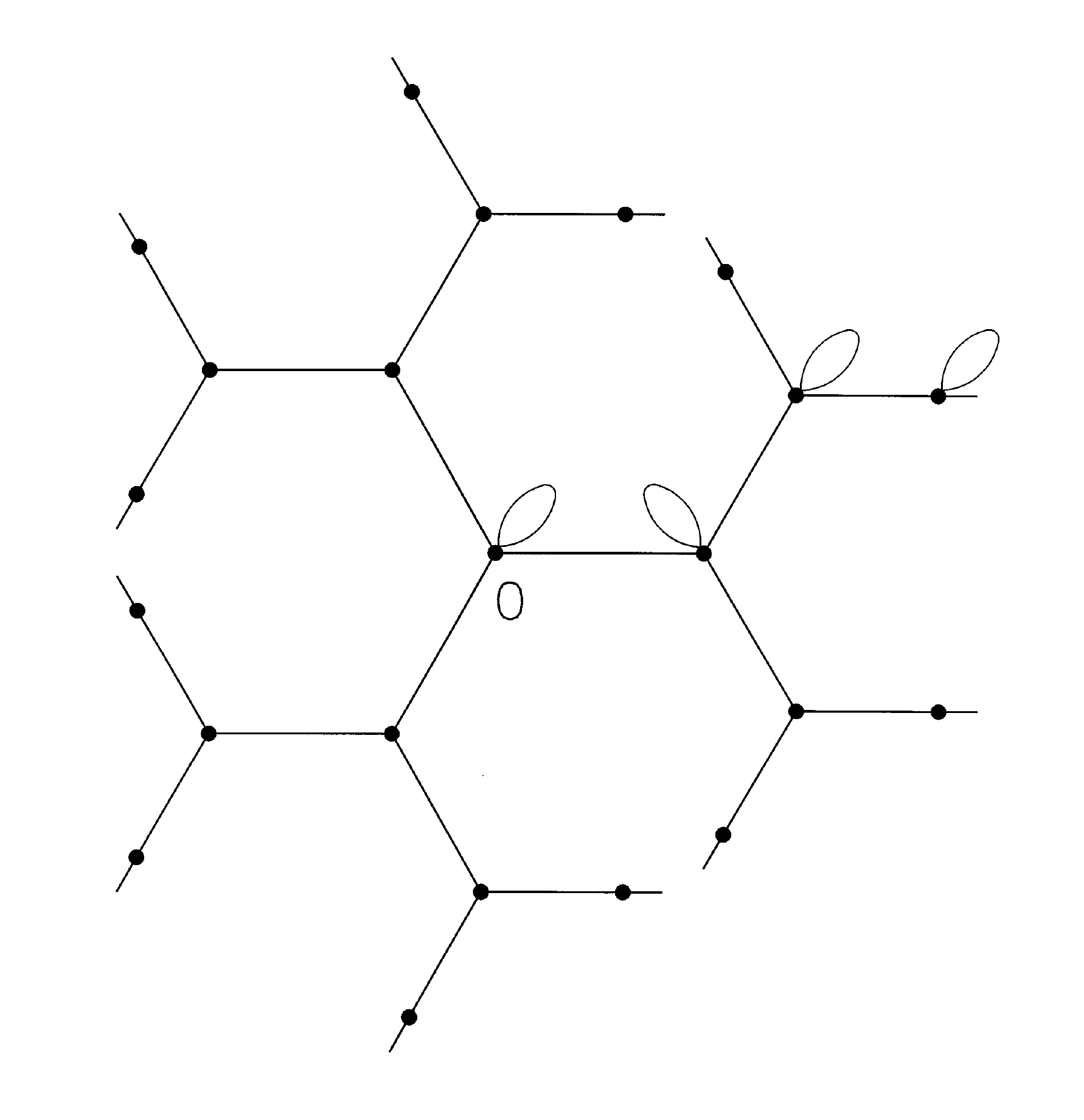,height=2.2in} \quad \psfig{file=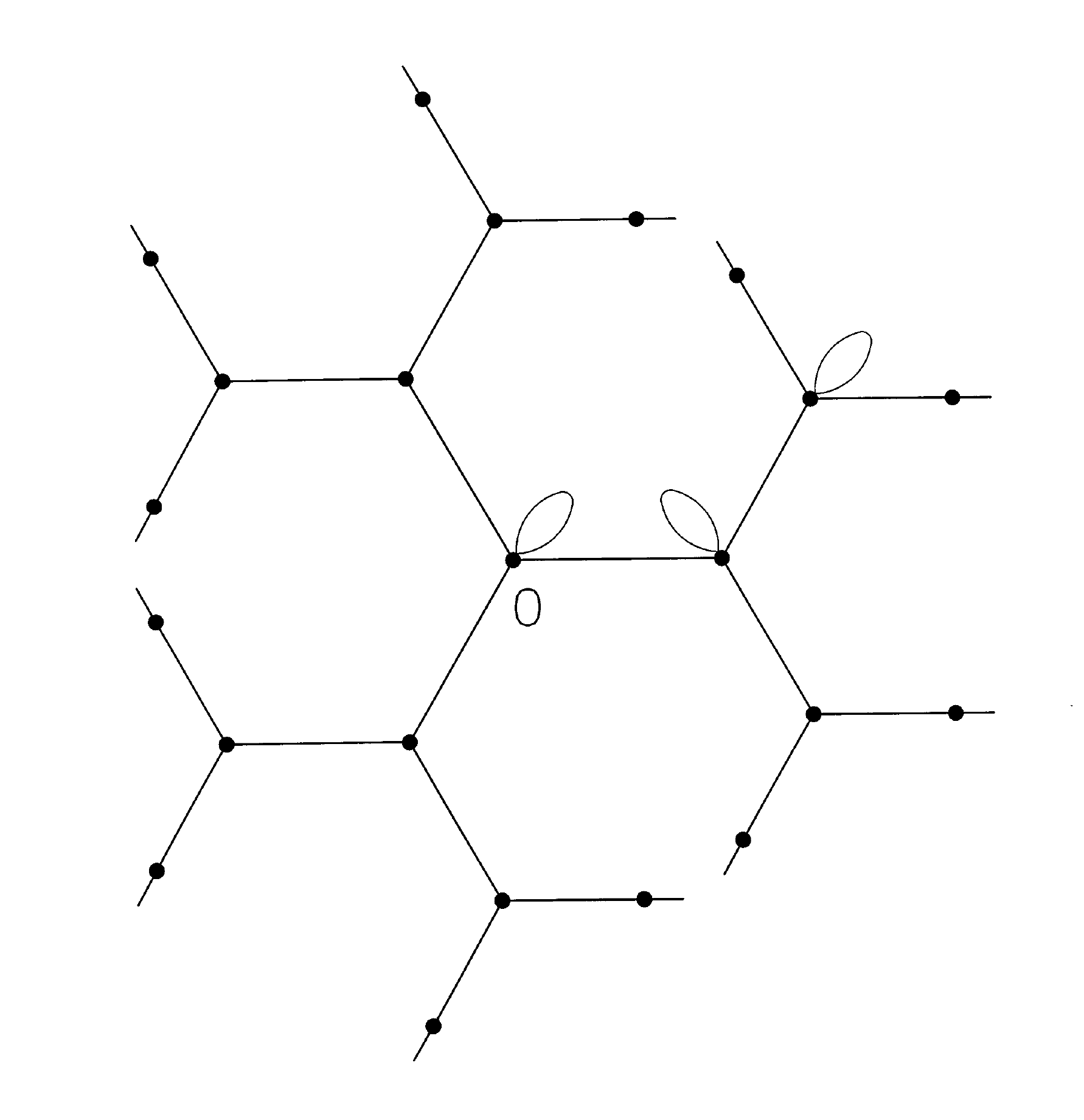,height=2.2in}
     \caption{The networks $\bh^{3}$ and $Y_2$.}
     \label{Fig14}
     \end{figure}

Consider the subgraph
$N_n\subset\bg^{Q}$ made of $n+1$ points starting from the root $0$ of $\bg^{Q}$.
Let $v_n$ be the Perron Frobenius eigenvector of the adjacency $A_{Y_n}$ of the graph
$Y_n\subset\bh^{Q}$ (cf. Fig. \ref{Fig14}), the last
obtained by perturbing $\bg^{Q}$ with self loops along $N_n$, normalized such that
$v_n(0)=1$. We start our analysis with the following
\begin{Lemma}
We have
\begin{equation}
\label{loo1}
\|P_{\ell^2(\bz)}R_{A_{\bg^{Q}}}(\l)P_{\ell^2(\bz)}\|
=\|P_{\ell^2(\bn)}R_{A_{\bg^{Q}}}(\l)P_{\ell^2(\bn)}\|
=\frac{1+a(\l)}{\m(\l)[1-a(\l)]}\,,
\end{equation}
where $a(\l)$ is given by \eqref{000}, and $\m(\l)$ is given by \eqref{333}.
\end{Lemma}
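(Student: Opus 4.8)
The plan is to identify both compressions appearing in \eqref{loo1} with scalar multiples of the convolution operator $T_a$ from \eqref{cov1}, and thereby reduce everything to the norm of the Poisson kernel and to one genuinely nontrivial point, namely that compressing to the half line $\bn$ does not lower the norm.

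First I would read off the matrix elements of the resolvent. By (7.6) of \cite{MW}, with $\xi:=1/\l$ one has $\langle R_{A_{\bg^{Q}}}(\l)\d_y,\d_x\rangle=\xi W_{x,y}(\xi)$. Substituting \eqref{ww} and comparing with \eqref{000} and \eqref{333}, the base of the power in $W_{x,y}$ is exactly $a(\l)$, while the remaining scalar prefactor in \eqref{ww} equals $1/(\xi\,\m(\l))$; hence $\xi W_{x,y}(\xi)$ collapses to
$$
\langle R_{A_{\bg^{Q}}}(\l)\d_y,\d_x\rangle=\frac{a(\l)^{d(x,y)}}{\m(\l)}\,.
$$
Since $\bz$ and $\bn$ are geodesics in the tree, the restriction of the tree distance $d$ to either of them is the one dimensional distance $|x-y|$. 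Consequently $P_{\ell^2(\bz)}R_{A_{\bg^{Q}}}(\l)P_{\ell^2(\bz)}=\frac1{\m(\l)}T_{a(\l)}$ on $\ell^2(\bz)$, and likewise $P_{\ell^2(\bn)}R_{A_{\bg^{Q}}}(\l)P_{\ell^2(\bn)}=\frac1{\m(\l)}T^{+}_{a(\l)}$, where $T^{+}_a:=P_{\ell^2(\bn)}T_aP_{\ell^2(\bn)}$ denotes the compression of $T_a$ to the half line.

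For the first equality in \eqref{loo1} I would then invoke the Fourier picture recalled around \eqref{cov1}: $T_a$ is unitarily equivalent to multiplication by the Poisson kernel $P_a$, whence $\|T_a\|=\sup_\th P_a(e^{\imath\th})=P_a(1)=\frac{1+a}{1-a}$, using that $0<a(\l)<1$ (which holds precisely because $\l>\|A_{\bg^{Q}}\|=2\sqrt{Q-1}$). Dividing by $\m(\l)$ yields the claimed value for the $\bz$ norm.

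The crux is the middle equality, that is $\|T^{+}_a\|=\|T_a\|$. One inequality is free, $\|T^{+}_a\|\le\|T_a\|$, since $T^{+}_a$ is a compression. For the reverse I would produce approximate eigenvectors for the top of the spectrum which already live on $\bn$. As the maximum of $P_a$ is attained at $\th=0$, the natural candidates are the normalized indicators $v_N:=N^{-1/2}\sum_{k=0}^{N-1}\d_k\in\ell^2(\bn)$. Both $T_a$ and $T^{+}_a$ are positive operators (the symbol $P_a$ is positive), so $\|T^{+}_a\|\ge\langle T^{+}_av_N,v_N\rangle=\frac1N\sum_{j,k=0}^{N-1}a^{|j-k|}$, and a direct summation of the resulting geometric-type series shows this tends to $1+\frac{2a}{1-a}=\frac{1+a}{1-a}$ as $N\to\infty$ (the boundary contributions are $O(1)$ and wash out after division by $N$). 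This forces $\|T^{+}_a\|=\frac{1+a}{1-a}=\|T_a\|$, and dividing by $\m(\l)$ closes the argument. I expect this half line step to be the only real obstacle; everything else is bookkeeping with \eqref{ww}, \eqref{000}, \eqref{333} and the Poisson kernel.
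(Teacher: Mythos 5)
Your argument is correct, and its reduction steps --- reading off $\langle R_{A_{\bg^{Q}}}(\l)\d_y,\d_x\rangle=\frac{1}{\l}W_{x,y}(1/\l)=\frac{a(\l)^{d(x,y)}}{\m(\l)}$ from \eqref{ww}, \eqref{000}, \eqref{333}, identifying the two compressions with $\frac{1}{\m(\l)}T_{a(\l)}$ and its half-line cut-down, and computing $\|T_a\|=P_a(1)=\frac{1+a}{1-a}$ from the Poisson kernel --- are exactly the identifications the paper makes in Section 4 before stating the lemma. Where you genuinely diverge is on the crux, namely $\|P_{\ell^2(\bn)}T_aP_{\ell^2(\bn)}\|=\|P_{\ell^2(\bz)}T_aP_{\ell^2(\bz)}\|$. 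The paper exploits translation invariance of the convolution kernel: the compression of $T_a$ to the symmetric segment $S_n=\{-n,\dots,n\}$ coincides (after a shift, hence unitarily) with its compression to $N_{2n}=\{0,\dots,2n\}$, so the two exhausting sequences of finite sections have equal norms term by term, and the monotone limits $\|P_{\ell^2(N_n)}T_aP_{\ell^2(N_n)}\|\uparrow\|P_{\ell^2(\bn)}T_aP_{\ell^2(\bn)}\|$, $\|P_{\ell^2(S_n)}T_aP_{\ell^2(S_n)}\|\uparrow\|P_{\ell^2(\bz)}T_aP_{\ell^2(\bz)}\|$ are supplied by Lemma \ref{00}(i), i.e.\ by Theorem 6.8 of \cite{S} for positivity preserving operators. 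You instead exhibit explicit approximate maximizers $v_N=N^{-1/2}\sum_{k=0}^{N-1}\d_k$ supported on the half line and compute $\langle T_av_N,v_N\rangle=1+2\sum_{m=1}^{N-1}\left(1-\frac{m}{N}\right)a^m\to\frac{1+a}{1-a}$, which is a correct and complete substitute. Your route is more elementary and self-contained (it bypasses Lemma \ref{00} and the Seneta machinery entirely, and makes visible that the norm is attained at frequency zero); the paper's route is shorter given that Lemma \ref{00} is already established for other purposes, and its shift trick applies verbatim to any translation invariant kernel without having to guess a maximizing sequence.
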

\begin{proof}
Fix $a<1$. We have
$$
P_{\ell^2(S_n)}T_{a}P_{\ell^2(S_n)}=P_{\ell^2(N_{2n})}T_{a}P_{\ell^2(N_{2n})}\,.
$$
Then we get
\begin{align*}
\|P_{\ell^2(\bn)}T_{a}P_{\ell^2(\bn)}\|
=&\lim_n\|P_{\ell^2(N_n)}T_{a}P_{\ell^2(N_n)}\|
=\lim_n\|P_{\ell^2(N_{2n})}T_{a}P_{\ell^2(N_{2n})}\|\\
\equiv&\lim_n\|P_{\ell^2(S_n)}T_{a}P_{\ell^2(S_n)}\|
=\|P_{\ell^2(\bz)}T_{a}P_{\ell^2(\bz)}\|\,.
\end{align*}
\end{proof}
The main properties of the Perron Frobenius eigenvector of $A_{\bh^{Q}}$ are summarized in the following
\begin{Thm}
\label{czz} 
Suppose that $Q\leq 7$. With the above notations, $v_n$
converges pointwise to a weight $v$ which is a Perron Frobenius
eigenvector for $A_{\bh^{Q}}$. It is given by
$$
v(x)=a(\l_*)^{d(x,\bn)}\big[y(x)(1-a(\l_*))+1\big]\,,
$$
where, $y(x)\in\bn$ is described in Lemma \ref{loo},
$a(\l_*)$ is given by \eqref{000}, and $\l_*$ fulfills  \eqref{11aa}.
\end{Thm}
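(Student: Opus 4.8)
The plan is to follow the template of the $\bz$--case (Theorem \ref{ress} and the discussion after it), the only genuinely new feature being that the base $\bn$ of the perturbation is transient, so the uniqueness of the invariant weight exploited there is no longer available and must be replaced by an explicit computation.

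First I would set up the finite--volume eigenvectors exactly as in the $\bz$ case. By \eqref{loo1} the norm of the compressed resolvent on $\bn$ coincides with the one on $\bz$, so the secular equation \eqref{sc} for $\bh^{Q}$ is literally the one solved in \eqref{11aa}; in particular $\|A_{\bh^{Q}}\|=\l_*$ with $\l_*>2\sqrt{Q-1}$ precisely when $Q\leq7$. By Lemma \ref{00}, Theorem \ref{es} and the Dini Theorem one gets $\l_n:=\|A_{Y_n}\|\uparrow\l_*$, and by Lemma \ref{fgiii}, \eqref{zeta}, \eqref{ww} and Lemma \ref{loo} the finite--volume eigenvector factorizes as
\begin{equation*}
v_n(x)=a(\l_n)^{d(x,y_n(x))}\,w_n(y_n(x))\,,
\end{equation*}
where $y_n(x)\in N_n$ is the closest point to $x$ and $w_n$ is the top (generalized) eigenvector of the compression $P_{\ell^2(N_n)}T_{a(\l_n)}P_{\ell^2(N_n)}$, normalized by $w_n(0)=1$. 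Since $y_n(x)\to y(x)$ and $a(\l_n)\to a(\l_*)$, the whole problem reduces to the pointwise convergence $w_n(y)\to w(y)$ on $\bn$ toward the candidate limit $w$.

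Second, I would identify and verify this limiting weight. Writing $a_*:=a(\l_*)$, $\m_*:=\m(\l_*)$ and recalling from \eqref{ww}, \eqref{000}, \eqref{333} that $P_{\ell^2(\bn)}R_{A_{\bg^{Q}}}(\l_*)P_{\ell^2(\bn)}=\m_*^{-1}P_{\ell^2(\bn)}T_{a_*}P_{\ell^2(\bn)}$, the secular equation \eqref{1aa} gives $\m_*=(1+a_*)/(1-a_*)$. A direct summation shows that the linear profile $w(y):=y(1-a_*)+1$, $y\in\bn$, satisfies $\sum_{y\in\bn}a_*^{|x-y|}w(y)=\m_* w(x)$ for every $x\in\bn$; for the ansatz $w(y)=\alpha+\beta y$ the identity collapses to $\beta=(1-a_*)\alpha$, which holds with $\alpha=1$. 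Setting $v:=R_{A_{\bg^{Q}}}(\l_*)w$ (a pointwise absolutely convergent sum, since $a_*\sqrt{Q-1}<1$ by \eqref{000} while $w$ grows only linearly along $\bn$), the tree--geodesic additivity through $y(x)$ (Lemma \ref{loo}) yields $v(x)=a_*^{d(x,\bn)}\,[y(x)(1-a_*)+1]$, i.e. the asserted formula. Finally $v|_{\bn}=w$ by the eigen--equation, so, using $A_{\bg^{Q}}R_{A_{\bg^{Q}}}(\l_*)=\l_*R_{A_{\bg^{Q}}}(\l_*)-\idd$ together with $Dv=v|_{\bn}=w$, one gets $A_{\bh^{Q}}v=(A_{\bg^{Q}}+D)v=\l_* v-w+w=\l_* v$; as $v>0$ and $\l_*=\|A_{\bh^{Q}}\|$, $v$ is a Perron Frobenius eigenvector.

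The hard part will be the pointwise convergence $w_n\to w$. In the $\bz$ case this followed from recurrence and the uniqueness of the subinvariant weight, but $\bn$ is transient, so subinvariant weights are far from unique and a Fatou argument only shows that $\liminf_n w_n$ is subinvariant with value $1$ at $0$. I would therefore solve the finite problem explicitly: the matrix $[a^{|i-j|}]_{0\le i,j\le n}$ has tridiagonal inverse $\tfrac1{1-a^2}J_n$, so $P_{\ell^2(N_n)}T_aP_{\ell^2(N_n)}w_n=\m w_n$ is equivalent to the Jacobi equation $J_n w_n=\tfrac{1-a^2}{\m}w_n$. At the spectral edge $\m=(1+a)/(1-a)$ one has $\tfrac{1-a^2}{\m}=(1-a)^2$, whence the interior three--term relation collapses to the discrete Laplace equation $w_{i-1}-2w_i+w_{i+1}=0$, whose positive solution fixed by the $i=0$ boundary row is exactly $w_i=1+i(1-a)$. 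For finite $n$ the top eigenvalue $\m(\l_n)$ lies just below the edge, so $w_n$ is the corresponding $\sinh$--type solution; as $\l_n\uparrow\l_*$ and $n\to\infty$ these degenerate pointwise to the linear profile, giving $w_n(y)\to w(y)$ for each $y$ (alternatively, the explicit upper control from $J_n$ combined with the Fatou lower bound squeezes $w_n$ to $w$). Together with the first two steps this yields $v_n\to v$ pointwise, with $v$ the stated Perron Frobenius eigenvector.
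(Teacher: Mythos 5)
Your proposal is correct and follows the same skeleton as the paper's proof: factorize $v_n(x)=a(\l_n)^{d(x,N_n)}w_n(y_n(x))$ via Lemma \ref{loo}, reduce everything to the pointwise convergence $w_n\to w$ on $\bn$, and identify the limit as the linear profile $w(k)=(1-a(\l_*))k+1$. Where you differ is in how the key convergence step is realized. The paper writes the eigenvalue equation for $\s_n(k):=a(\l_n)^k w_n(k)$ as the explicit triangular system \eqref{iiff}, whose form is independent of $n$ and whose coefficients involve only $a(\l_n)$ and $\La_n$; pointwise convergence then follows from $a(\l_n)\downarrow a(\l_*)$, $\La_n\uparrow\La_*=\frac{1+a(\l_*)}{1-a(\l_*)}$, and one checks that the limit system is solved by $\s(k)=a(\l_*)^k[(1-a(\l_*))k+1]$. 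You instead invert the Kac--Murdock--Szeg\H{o} matrix $[a^{|i-j|}]$ to a tridiagonal Jacobi matrix and observe that at the spectral edge $\m=\frac{1+a}{1-a}$ the interior rows collapse to the discrete Laplace equation, which together with the $i=0$ boundary row forces the affine profile. These are two presentations of the same mechanism (an $n$-independent three-term recursion launched from $w_n(0)=1$ with convergent coefficients), but yours makes the origin of the linear growth more transparent; just make sure you state explicitly, as the paper does, that the recursion determining $w_n(k)$ for fixed $k$ does not depend on $n$ except through $(a(\l_n),\La_n)$ --- that is the whole content of the ``degenerate pointwise to the linear profile'' sentence, which is otherwise a bit quick. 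Your additional second step --- checking directly that $w$ is a $\m_*$-eigenvector of $P_{\ell^2(\bn)}T_{a_*}P_{\ell^2(\bn)}$ and that $v=R_{A_{\bg^{Q}}}(\l_*)w$ satisfies $A_{\bh^{Q}}v=\l_*v$ --- is not in the paper (there the limit is an eigenvector essentially by construction), and it is a worthwhile addition, especially since, as you correctly note, the transience of $\bn$ rules out the uniqueness/Fatou shortcut used in the $\bz$ case.
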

\begin{proof}
We begin by noticing that
$$
v_n(x)=a(\l_n)^{d(x,N_n)}w_n\big(y_n(x)\big)\,,
$$
where $y_n(x)$ is the element of $N_n$ realizing the distance between $x$ and $N_n$
(cf. Lemma \ref{loo}), and $w_n$ is the Perron Frobenius eigenvector of
$P_{\ell^2(N_n)}T_{a(\l_n)}P_{\ell^2(N_n)}$, normalized at the origin of $N_n$ (i.e. $w_n(0)=1$). The result follows if we prove that, for each fixed $k\in\bn$, $w_n(k)$ converges to $(1-a(\l_*))k+1$ as $n$ goes to $\infty$.

Let $\La_n:=\|P_{\ell^2(N_n)}T_{a(\l_n)}P_{\ell^2(N_n)}\|$. As $\m(\l_n)\uparrow\m(\l_*)$,
by Lemma \ref{00} we get
$$
\La_n\uparrow\La_*:=\|P_{\ell^2(\bn)}T_{a(\l_*)}P_{\ell^2(\bn)}\|=\frac{1+a(\l_*)}{1-a(\l_*)}\,.
$$
In addition, we have also $a(\l_n)\downarrow a(\l_*)$. Define $\s_n(k):=a(\l_n)^kw_n(k)$. It is straightforward to see that the solution for the $\s_n(k)$, $0\leq k\leq n$, $n\in\bn$ is given by
\begin{equation}
\label{iiff}
\s_n(k)=1+\frac1{\La_n}\sum_{l=0}^{k-1}\big(a(\l_n)^{2(l-k)}-1\big)\s_n(l)\,,\quad n\in\bn\,.
\end{equation}
Namely, the form of the system defining the $\s_n$ in terms of
$\La_n$, considered as a known quantity, is triangular and independent on the size (i.e. on
$n\in\bn$). By the previous claims, thanks to the fact that
$a(\l_n)\to a(\l_*)$ and $\La_n\to\La_*$ (cf. \eqref{loo1}),
$\s_n(k)$ converges pointwise in $k$ when $n\to\infty$ to
$$
\s(k)=a(\l_*)^k\big[(1-a(\l_*))k+1\big]\,,
$$
which is precisely the limit of \eqref{iiff} as $n\to\infty$.
\end{proof}
Concerning the resolvent of $A_{\bh^Q}$ and the transience character, we get
\begin{Thm}
Suppose that $Q\leq 7$ and $\l>\l_*$ given in \eqref{11aa}. We have
\begin{equation*}
R_{A_{\bh^{Q}}}(\l)=R_{A_{\bg^{Q}}}(\l)\bigg[\idd_{\ell^2(\bg^{Q})} +P_{\ell^2(\bn)}
\bigg(\idd_{\bn}-\frac1{\l}W\bigg(\frac1{\l}\bigg)\bigg)^{-1}P_{\ell^2(\bn)}R_{A_{\bg^{Q}}}(\l)\bigg]\,,
\end{equation*}
where $W$ is the operator acting on $\bn$ given by \eqref{ww}. In addition, $\bh^{Q}$ is transient.
\end{Thm}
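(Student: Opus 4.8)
The plan is to follow the scheme of Theorem \ref{ress}, replacing the full line $\bz$ by the half line $\bn$, and to reduce the transience statement to the behaviour, as $\l\downarrow\l_*$, of a single diagonal matrix element which I would compute by solving a half line difference equation. First I would record that, by \eqref{loo1}, the secular equation \eqref{sc} for $S\sim\bn$ coincides with the one for $S\sim\bz$; hence it has the same unique solution $\l_*$ given in \eqref{11aa}, and $\|A_{\bh^{Q}}\|=\l_*>\|A_{\bg^{Q}}\|=2\sqrt{Q-1}$ exactly when $Q\leq7$, by Theorem \ref{es}. Since each added self loop contributes $1$ to the diagonal, the perturbation $D$ equals $P_{\ell^2(\bn)}$, so $\car(D)=\ell^2(\bn)$ and
$$
S(\l)=P_{\ell^2(\bn)}R_{A_{\bg^{Q}}}(\l)P_{\ell^2(\bn)}=\frac1\l P_{\ell^2(\bn)}W\Big(\frac1\l\Big)P_{\ell^2(\bn)}
$$
by \eqref{ww} and (7.6) of \cite{MW}. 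As $\l>\l_*$ implies $\|S(\l)\|<1$ (Lemma \ref{00} together with \eqref{loo1}), the operator $\idd_{\bn}-S(\l)$ is invertible and the resolvent formula follows verbatim from \eqref{koro}.

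For the transience, the same algebraic manipulation used in the proof of Theorem \ref{ress} gives
$$
\big\langle R_{A_{\bh^{Q}}}(\l)\d_0,\d_0\big\rangle=\big\langle S(\l)\big(\idd_{\bn}-S(\l)\big)^{-1}\d_0,\d_0\big\rangle\,,
$$
and by \cite{S}, Section 6, the root $0$ of $\bn$ is an admissible base point, so it suffices to show the right hand side stays bounded as $\l\downarrow\l_*$. Using \eqref{ww} one checks, as in Section \ref{z}, that $S(\l)$ acts on $\ell^2(\bn)$ as $\frac1{\m(\l)}$ times the truncation to $\bn$ of the convolution operator $T_{a(\l)}$ of \eqref{cov1}, i.e. $S(\l)_{ij}=a(\l)^{|i-j|}/\m(\l)$ for $i,j\geq0$.

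The main obstacle is that, unlike the $\bz$ case, the half line operator $\idd_{\bn}-S(\l)$ has no Fourier diagonalization, so the contour integral of Theorem \ref{ress} is unavailable. I would circumvent this by solving $(\idd_{\bn}-S(\l))u=\d_0$ directly. Writing $a=a(\l)$, $\m=\m(\l)$ and $y_i:=(T_au)_i$, the kernel $a^{|i-j|}$ is the Green function of a three term difference operator, so applying the latter turns the equation, for $i\geq1$, into
$$
y_{i+1}-\Big[(a+a^{-1})+\frac{a-a^{-1}}{\m}\Big]y_i+y_{i-1}=0\,,\qquad u_i=\frac{y_i}{\m}\ \ (i\geq1)\,,
$$
with the boundary relation $y_0=\m(u_0-1)$ coming from the equation at $i=0$. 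Since $\l$ lies in the resolvent set, $u\in\ell^2(\bn)$, so only the decaying characteristic root $t_-=t_-(\l)$, $|t_-|<1$, of $t^2-ct+1=0$ with $c:=(a+a^{-1})+(a-a^{-1})/\m$ survives, whence $y_i=B\,t_-^{i}$. Imposing the compatibility condition $y_0=\sum_{j\geq0}a^{j}u_j$ pins down $B$ and yields
$$
\big\langle R_{A_{\bh^{Q}}}(\l)\d_0,\d_0\big\rangle=(S(\l)u)_0=\frac{B}{\m}=\frac{1-a\,t_-}{\m(1-a\,t_-)-1}\,.
$$

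Finally I would let $\l\downarrow\l_*$. At $\l_*$ one has $\m=\frac{1+a}{1-a}$, whence $c=2$ and the characteristic equation degenerates to the double root $t_-=1$; this is precisely the degeneracy responsible for the linear growth of the Perron Frobenius weight in Theorem \ref{czz}. Substituting $t_-=1$ the denominator becomes $\m(1-a)-1=(1+a)-1=a(\l_*)>0$, so
$$
\lim_{\l\downarrow\l_*}\big\langle R_{A_{\bh^{Q}}}(\l)\d_0,\d_0\big\rangle=\frac{1-a(\l_*)}{a(\l_*)}<+\infty\,,
$$
which is the negation of \eqref{caz}, i.e. $\bh^{Q}$ is transient. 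The contrast with the recurrent case is transparent: the identical computation on $\bz$ produces instead the denominator $\m(1-a\,t_-)-(1+a\,t_-)$, which vanishes at $t_-=1$ and forces the divergence of Theorem \ref{ress}.
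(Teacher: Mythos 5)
Your proposal is correct, and while the resolvent formula and the reduction to the boundedness of $\big\langle S(\l)(\idd_{\bn}-S(\l))^{-1}\d_0,\d_0\big\rangle$ follow the paper exactly (including the identification $\|A_{\bh^Q}\|=\l_*$ via \eqref{loo1} and the inheritance of \eqref{koro}), your computation of that quantity takes a genuinely different and more elementary route. The paper transfers the equation $(\m P_{\ell^2(\bn)}-P_{\ell^2(\bn)}T_aP_{\ell^2(\bn)})v=P_{\ell^2(\bn)}T_aP_{\ell^2(\bn)}\d_0$ to the Hardy space $H^2(\bt)$, symmetrizes the unknown to a function $F$ on the whole circle, solves $(\m-P_a)F$ explicitly, and then recovers $v(0)$ together with the auxiliary quantity $\G=\sum_{k\geq1}v(k)a^k$ from a $2\times2$ linear system whose coefficients $g_0,h_0,G,H$ are evaluated by analytic continuation and residues in the annulus $z_-<|z|<z_+$. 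You instead exploit the fact that the kernel $a^{|i-j|}$ is the Green function of a constant-coefficient three-term difference operator, so that $(\idd_{\bn}-S(\l))u=\d_0$ collapses to a scalar second-order recurrence for $y=P_{\ell^2(\bn)}T_aP_{\ell^2(\bn)}u$; the $\ell^2$ condition selects the single decaying mode $t_-^i$, and the boundary relation at $0$ determines the constant $B$ in closed form. I checked the recurrence (the identity $y_{i+1}+y_{i-1}-(a+a^{-1})y_i=(a-a^{-1})u_i$ does hold for all $i\geq1$ on the half line), the value $c=2$ at $\l_*$, and the limit $B/\m\to(1-a(\l_*))/a(\l_*)$, which agrees with the paper's answer. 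Your approach buys transparency and avoids complex analysis entirely, and your closing remark correctly isolates why $\bz$ is recurrent while $\bn$ is not (the boundary denominator survives the degeneration $t_-\to1$ on the half line but vanishes on the line); the paper's Hardy-space machinery, on the other hand, is closer in spirit to the spherical-function calculus it needs anyway for the tree perturbations $\bg^{Q,q}$ in the following section. The only caveat, which you share with the paper rather than introduce, is that the identity equating $\big\langle R_{A_{\bh^Q}}(\l)\d_0,\d_0\big\rangle$ with $\big\langle S(\l)(\idd_{\bn}-S(\l))^{-1}\d_0,\d_0\big\rangle$ is exact only up to terms that stay bounded as $\l\downarrow\l_*$, which is harmless for deciding transience.
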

\begin{proof}
The proof of the first part follows along the same lines as the
corresponding part of Theorem \ref{ress}. In order to check the
transience, we start by studying the equation
\begin{equation}
\label{har}
\big(\m P_{\ell^2(\bn)}-P_{\ell^2(\bn)}T_aP_{\ell^2(\bn)}\big)v=P_{\ell^2(\bn)}T_aP_{\ell^2(\bn)}\d_0
\end{equation}
where for $\l>\l_*$, $\m=\m(\l)$, $a=a(\l)$ are given by \eqref{333} and \eqref{000}, respectively. By using the Neumann expansion of $\idd_{\ell^2(\bn)}-P_{\ell^2(\bn)}T_aP_{\ell^2(\bn)}/\m$, we argue that $v$ has positive entries. After defining
$$
f(e^{\imath\th}):=\sum_{k\geq0}v(k)e^{\imath k\th}\,,
$$
and denoting $M_g$ the multiplication operator by the function $g$,
\eqref{har} becomes
\begin{equation}
\label{har1}
\big(\m P_{H^2(\bt)}-P_{H^2(\bt)}M_{P_a}P_{H^2(\bt)}\big)f=P_{H^2(\bt)}M_{P_a}P_{H^2(\bt)}1
\end{equation}
where $1$ is the constant function on the unit circle, and $H^2(\bt)\subset L^2(\bt)$ is the Hardy space which is isomorphic to the $L^2$--functions on the unit circle with vanishing Fourier coefficients corresponding to the negative frequences (cf. \cite{R}, Chapter 17). By passing to the conjugates,
\eqref{har1} leads to
\begin{equation}
\label{har2}
\big(\m P_{CH^2(\bt)}-P_{CH^2(\bt)}M_{P_a}P_{CH^2(\bt)}\big)\bar f
=P_{CH^2(\bt)}M_{P_a}P_{CH^2(\bt)}1
\end{equation}
where $M_{P_a}$ is the multiplication operator by the Poisson kernel $P_a(e^{\imath\th})$, $C$ is the canonical conjugation operator acting on functions defined on the circle, with $Cf\equiv\bar f$ given by
$$
\overline{f(e^{\imath\th})}:=\sum_{k\geq0}v(k)e^{-\imath k\th}
$$
as $v$ has positive entries. Define
$$
F(e^{\imath\th}):=\sum_{k\in\bz}v(|k|)e^{\imath k\th}\,,\quad \G:=\sum_{k=1}^{+\infty}v(k)a^{k}\,.
$$
We now compute
\begin{align}
\label{mpaf}
&M_{P_a}F=\sum_{k,l}a^{|k-l|}v(|l|)e^{\imath k\th}
=\sum_{k,l\geq0}a^{|k-l|}v(|l|)e^{\imath k\th}\nn\\
+&\sum_{k,l\leq0}a^{|k-l|}v(|l|)e^{\imath k\th}-v(0)
+\sum_{k,l>0}a^{k+l}v(l)\big(e^{\imath k\th}+e^{-\imath k\th}\big)\\
=&P_{H^2(\bt)}M_{P_a}P_{H^2(\bt)}f+P_{CH^2(\bt)}M_{P_a}P_{CH^2(\bt)}\bar f-v(0)
+(\G-1)P_a\,.\nn
\end{align}
By taking into account \eqref{har1}, \eqref{har2} and \eqref{mpaf}, we obtain
\begin{equation*}
\big(\m\idd_{L^2(\bt)}-P_a\big)F=(1-\m)v(0)+1+\G+(1-\G) P_a\,.
\end{equation*}
which can immediately solved, obtaining
\begin{equation}
\label{har333}
F(e^{\imath\th})=\frac{(1-\m)v(0)+1+\G}{\m-P_a(e^{\imath\th})}+
\frac{(1-\G)P_a(e^{\imath\th})}{\m-P_a(e^{\imath\th})}\,.
\end{equation}
Consider for $\l>\l_*$ (thus $P_a(e^{\imath\th})\equiv P_{a(\l)}(e^{\imath\th})$ and
$\m\equiv\m(\l)$) the following elements of $H^2(\bt)$ given by,
\begin{align*}
&G(e^{\imath\th}):=P_{H^2(\bt)}\left[\frac{1}{\m-P_a}\right](e^{\imath\th})
=\sum_{k\geq0}g_ke^{\imath k\th}\,,\\
&H(e^{\imath\th}):=P_{H^2(\bt)}\left[\frac{P_a}{\m-P_a}\right](e^{\imath\th})
=\sum_{k\geq0}h_ke^{\imath k\th}\,.
\end{align*}
It is well known that the above functions can be analytically
continued inside the unit circle simply by replacing
$e^{\imath\th}\to z$, see e.g. \cite{R}, Chapter 17. We call such
functions as $G(z)$ and $H(z)$, respectively. Thus,
\begin{align}
\label{har4}
&\frac1{2\pi}\int_0^{2\pi}G(e^{\imath\th})\di\th=g_0\,,\quad
\frac1{2\pi}\int_0^{2\pi}H(e^{\imath\th})\di\th=h_0\,,\nn\\
&G:=G(a)-\frac1{2\pi}\int_0^{2\pi}G(e^{\imath\th})\di\th=\sum_{k=1}^{+\infty}g_ka^{k}\,,\\
&H:=H(a)-\frac1{2\pi}\int_0^{2\pi}H(e^{\imath\th})\di\th=\sum_{k=1}^{+\infty}h_ka^{k}\,.\nn
\end{align}
Notice that, after denoting the analytic continuation of $f$ inside the unit circle as $f(z)$,
\begin{align}
\label{har5}
&P_{H^2(\bt)}F(e^{\imath\th})
=f(e^{\imath\th})\,,\nn\\
&\frac1{2\pi}\int_0^{2\pi}f(e^{\imath\th})\di\th=v(0)\,,\\
&f(a)-\frac1{2\pi}\int_0^{2\pi}f(e^{\imath\th})\di\th=\G\,.\nn
\end{align}
After integrating on the unit circle first, and then evaluating the projection
on $H^2(\bt)$ of both members of \eqref{har333} at $a\equiv a(\l)$,
we obtain by taking into account \eqref{har4} and \eqref{har5}, 
\begin{align*}
v(0)&=[(1-\m)v(0)+1+\G]g_0+(1-\G)h_0\,,\\
\G&=[(1-\m)v(0)+1+\G]G+(1-\G)H\,,
\end{align*}
respectively. This leads to the
following linear system for the unknown $v(0)$ and $\G$,
$$
\begin{cases}
\left[1+(\m-1)g_0\right]v(0)&+\,\,\,\quad (h_0-g_0)\G=g_0+h_0\,,\\
(\m-1)Gv(0)&+(1+H-G)\G=G+H\,,
\end{cases}
$$
which has a unique solution if $\l>\l_*$. For $v(0)$ this leads to,
\begin{equation}
\label{har6}
v(0)=\frac{g_0+h_0+2(g_0H-h_0G)}{1-G+H+(\m-1)(g_0+g_0H-h_0G)}\,.
\end{equation}
 The first step is to compute the analytic continuation of $\frac1{\m-P_a(e^{\imath\th})}$ and
$\frac{P_a(e^{\imath\th})}{\m-P_a(e^{\imath\th})}$ inside the annulus
$\{z\in\bc : z_-<|z|<z_+\}$, where $z_-$, $z_+$ are given in \eqref{zed}. This leads to
\begin{align*}
\frac1{\m-P_a(z)}=&\frac{(1+a^2)z-a(1+z^2)}{\sqrt{\D}}\S(z)\,,\\
\frac{P_a(z)}{\m-P_a(z)}=&\frac{(1-a^2)z}{\sqrt{\D}}\S(z)\,,
\end{align*}
where
$$
\S(z):=\frac1{z_+}\sum_{k=0}^{+\infty}\left(\frac{z}{z_+}\right)^k
+\frac1{z}\sum_{k=0}^{+\infty}\left(\frac{z_-}{z}\right)^k\,.
$$
We get for $g_0$, $h_0$, $G$, $H$ appearing in \eqref{har6},
\begin{align}
\label{gghh}
g_0=&\frac{a}{\sqrt{\D}}\left[\left(a+\frac1a\right)-\left(z_-+\frac1{z_+}\right)\right]\,,\nn\\
G=&\frac{a^2}{\sqrt{\D}(z_+-a)}\left[\left(a+\frac1a\right)-\left(z_++\frac1{z_+}\right)\right]\,,\\
h_0=&\frac{(1-a^2)}{\sqrt{\D}}\,,\quad H=\frac{a(1-a^2)}{\sqrt{\D}(z_+-a)}\nn\,.
\end{align}
The last step is to insert \eqref{gghh} in \eqref{har6} and compute the limit $\l\downarrow\l_*$. By taking into account that, first $\D\to0$ and correspondingly $z_\pm\to1$, and then
$\m\to\frac{1+a(\l_*)}{1-a(\l_*)}$, we obtain for the limit of $v(0)$ (which is a function of $\l$),
$$
\lim_{\l\downarrow\l_*}\big\langle R_{A_{\bh^{Q}}}(\l)\d_0,\d_0\big\rangle
=\lim_{\l\downarrow\l_*}v(0)=\frac{1-a(\l_*)}{a(\l_*)}
$$
which is finite, that is $A_{\bh^{Q}}$ is transient.
\end{proof}

\section{the perturbed tree of order $Q$ along a subtree of order $q$}

The present section is devoted to $\bg^{Q,q}$, $2<q\leq Q$,  obtained by adding to
$\bg^{Q}$ self loops on vertices of the subtree $S\sim\bg^{q}$, see Fig. \ref{Fig8}. The main object is the operator
$T_{a,q}$ on $\ell^2(\bg^{q})$ which is the convolution by the function $f_a(x):=a^{d(x,0)}$. Such a convolution operator is well defined if $a$ is sufficiently small, see below.
It extends the previous case when $q=2$ and then $T_{a,2}\equiv T_a$ treated in Section \ref{z}. We refer the reader to \cite{FP} for the detailed exposition of the basic harmonic analysis on the Cayley Trees, and for further details.
 \begin{figure}[ht]
     \centering
     \psfig{file=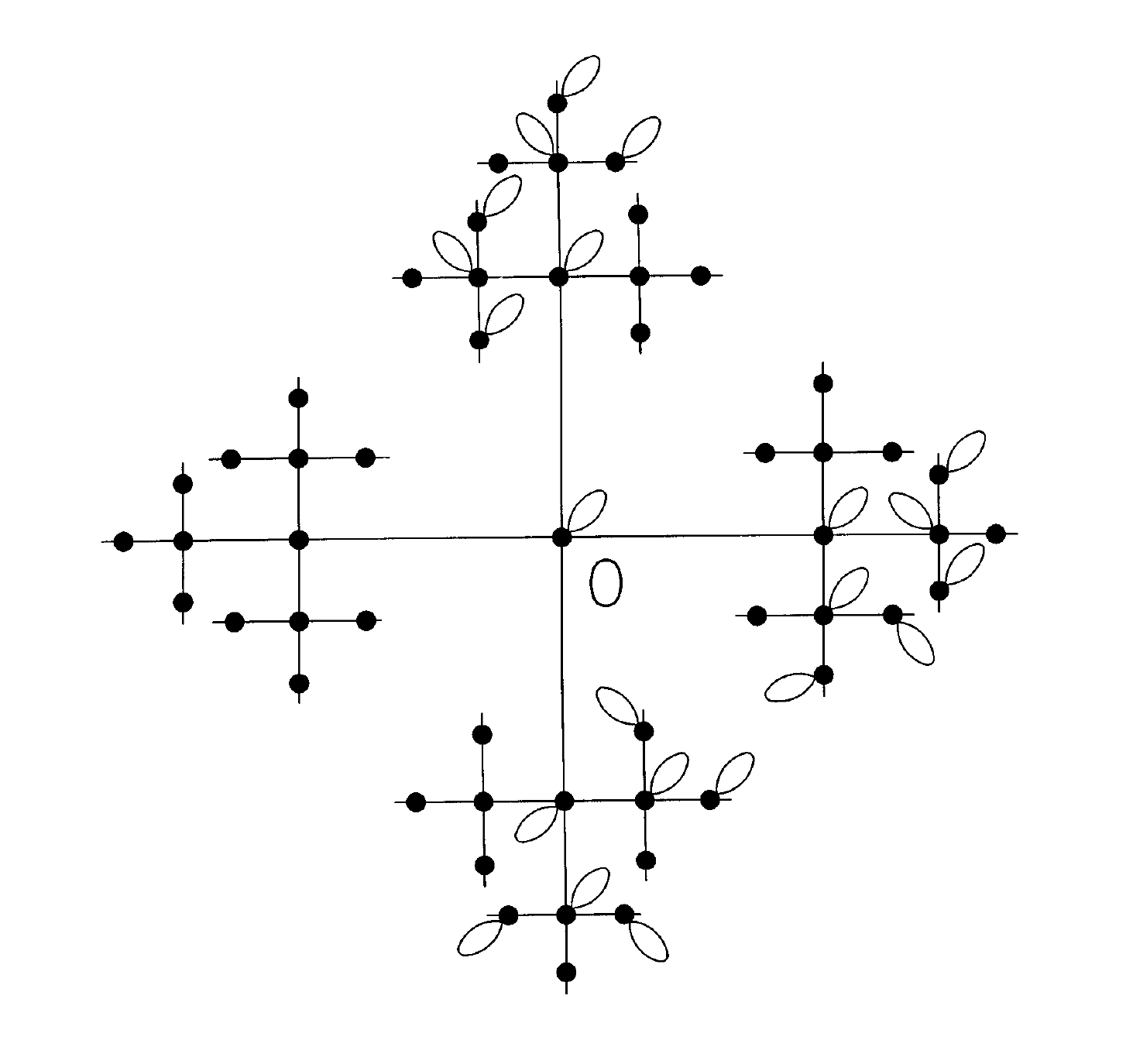,height=2.1in} \quad \psfig{file=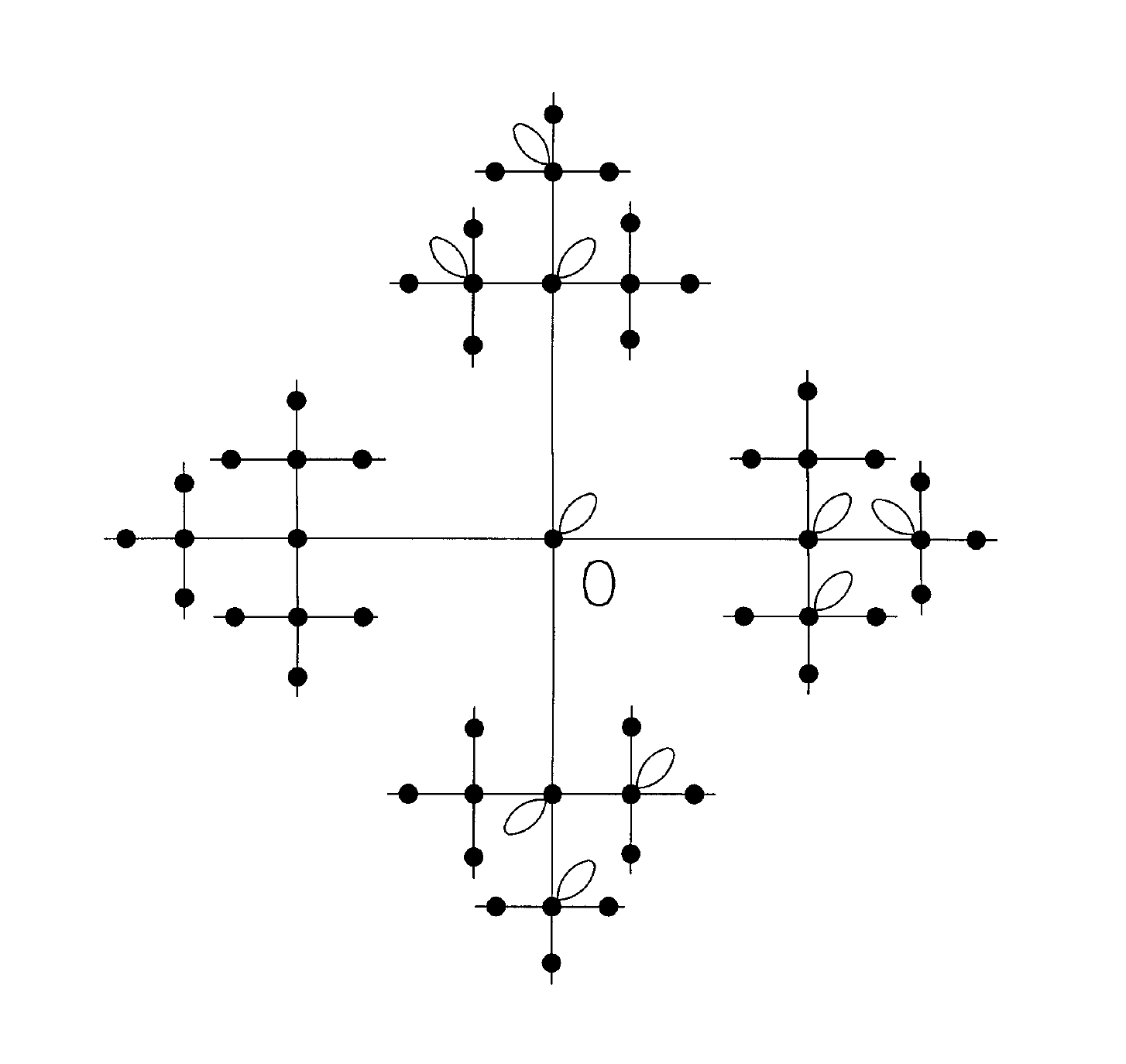,height=2.1in}
     \caption{The networks $\bg^{4,3}$ and $Z_2$.}
     \label{Fig8}
     \end{figure}

We start by considering the convolution by the functions $\m_n$ with
$$
\m_n:=|\G_n|^{-1}\chi_{\G_n}
$$
supported on the shell $\G_n$ made of the points at the distance $n$
from the root $0$. In our computations for objects involving
$\bg^{Q,q}$, the parameter $a$ will be a function of $Q$ and $\l$,
see below. As $Q$ will be kept fixed for all the computations, we do
not explicitly report such a dependence on $Q$ in the parameters
under consideration, like $a=a(\l,Q)$, $\m=\m(\l,Q)$ given in
\eqref{000} and \eqref{333}, when it causes no confusion.

For the convolution operator $T_{a,q}$, we get
$$
T_{a,q}=\idd_{\ell^2(\bg^{q})}+\frac{q}{q-1}\sum_{k=1}^{+\infty}[a(q-1)]^k\m_k\,,
$$
and by taking into account that $\m_k$ is a polynomial function $Q_k(\m_1)$ (cf. \cite{FP}, Section 3.1), we formally write
$$
T_{a,q}=\idd_{\ell^2(\bg^{q})}+\frac{q}{q-1}\sum_{k=1}^{+\infty}[a(q-1)]^kQ_k(\m_1)\,.
$$
 This means that $T_{a,q}=f(\m_1)$ by the analytic functional calculus of $\m_1$ where the function
\begin{equation}
\label{man1}
f(w)=1+\frac{q}{q-1}\sum_{k=1}^{+\infty}[a(q-1)]^kQ_k(w)
\end{equation}
is analytic at least in a neighborhood of the spectrum of $\m_1$, the last being the segment
$\left[-\frac{2\sqrt{q-1}}{q},\frac{2\sqrt{q-1}}{q}\right]$, see \cite{FP},
Theorem 3.3.3.\footnote{\label{foot}It can be seen that $f(w)$ is analytic on a neighborhood of the spectrum of $\m_1$, provided that
$a\sqrt{q-1}<1$.} It is standard to compute $F:=f\circ\g$, where
$\g$ is the function
$$
\g(z):=\frac{(q-1)^z+(q-1)^{1-z}}{q}
$$
given in pag. 40 of \cite{FP}. In addition, one can recover from the computations in \cite{FP}, that $Q_k(\g(z))=\f_z(k)$ where $\f_z$ is the spherical function appearing in Theorem 3.2.2 of \cite{FP}. By taking into account the previous considerations and after some standard computations, we obtain
\begin{equation}
\label{man}
F(z)=1+\frac{a}{(q-1)^{1-z}-(q-1)^{z}}\left[\frac{(q-1)^{2(1-z)}-1}{1-a(q-1)^{1-z}}
+\frac{1-(q-1)^{2z}}{1-a(q-1)^{z}}\right]\,.
\end{equation}
After removing the removable singularities for
$z=\imath k\pi/\ln(q-1)$, if $a$ is sufficiently small (cf. Footnote \ref{foot}) $F$ is analytic in a neighborhood of the line
$\left\{z\in\bc : \rel(z)=\frac12\right\}$, which is precisely the inverse image under $\g$ of
$\left[-\frac{2\sqrt{q-1}}{q},\frac{2\sqrt{q-1}}{q}\right]$. We then have the following
\begin{Prop}
\label{fc}
If $a<\frac1{\sqrt{q-1}}$, then $\|T_{a,q}\|=\frac{1-a^2}{(1-a\sqrt{q-1})^2}$.
\end{Prop}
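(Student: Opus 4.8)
The plan is to exploit the representation $T_{a,q}=f(\m_1)$ obtained just above. Since $\m_1$ is selfadjoint with $\s(\m_1)=\left[-\frac{2\sqrt{q-1}}{q},\frac{2\sqrt{q-1}}{q}\right]$ and, by the hypothesis $a\sqrt{q-1}<1$, the function $f$ in \eqref{man1} is analytic in a neighbourhood of this segment (cf. Footnote \ref{foot}), the analytic functional calculus applies. Moreover $f$ has real coefficients, hence is real on the (real) spectrum, so $T_{a,q}$ is selfadjoint and the spectral mapping theorem gives
\begin{equation*}
\|T_{a,q}\|=\sup_{w\in\s(\m_1)}|f(w)|\,.
\end{equation*}
The whole problem thus reduces to maximizing $|f|$ over the spectral segment.

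Next I would parametrize $\s(\m_1)$ through $\g$: on the line $\rel(z)=\frac12$, which $\g$ maps onto $\s(\m_1)$, write $z=\frac12+\imath t$ and set $u:=(q-1)^{\imath t}$, so that $|u|=1$, $(q-1)^{z}=\sqrt{q-1}\,u$ and $(q-1)^{1-z}=\sqrt{q-1}/u$. Substituting into \eqref{man} and abbreviating $r:=\sqrt{q-1}$, $b:=a\sqrt{q-1}\in[0,1)$ (in the applications $a=a(\l)>0$, cf. \eqref{000}), the prefactor becomes $\frac{bu}{r^{2}(1-u^{2})}$, while the bracket, put over the common denominator $u(u-b)(1-bu)$, has numerator $(1-u^{2})\big[r^{2}(1+u^{2})-bu(1+r^{2})\big]$. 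The factor $1-u^{2}$ cancels the one in the denominator of the prefactor -- this is exactly what removes the apparent singularities of \eqref{man} at $u=\pm1$ -- and, after recombining with the leading $1$, the numerator telescopes to $u(r^{2}-b^{2})$. I thereby obtain the closed form
\begin{equation*}
F(u)=\frac{u\,(r^{2}-b^{2})}{r^{2}\,(u-b)(1-bu)}\,,\qquad |u|=1\,.
\end{equation*}

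For the final step, note that for $u=e^{\imath\th}$ one has $|u-b|=|1-bu|=\sqrt{1-2b\cos\th+b^{2}}$, whence
\begin{equation*}
|F(e^{\imath\th})|=\frac{r^{2}-b^{2}}{r^{2}\,(1-2b\cos\th+b^{2})}\,;
\end{equation*}
indeed $\overline{F(u)}=F(u)$ on the circle, confirming that $f$ is real and here positive, consistent with $T_{a,q}$ being positive preserving. Since $b\ge0$ and $r^{2}-b^{2}>0$, the right--hand side is maximized exactly when $1-2b\cos\th+b^{2}$ is least, i.e. at $\th=0$, that is $u=1$, corresponding to the top $w=\frac{2\sqrt{q-1}}{q}$ of $\s(\m_1)$. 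Hence
\begin{equation*}
\|T_{a,q}\|=F(1)=\frac{r^{2}-b^{2}}{r^{2}(1-b)^{2}}=\frac{(q-1)(1-a^{2})}{(q-1)(1-a\sqrt{q-1})^{2}}=\frac{1-a^{2}}{(1-a\sqrt{q-1})^{2}}\,,
\end{equation*}
which is the assertion.

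The step I expect to be the main obstacle is the algebraic reduction of \eqref{man} to the compact form $F(u)=u(r^{2}-b^{2})/\big(r^{2}(u-b)(1-bu)\big)$: one must carefully track the cancellation of the $1-u^{2}$ factors and verify that all intermediate $u$-- and $u^{2}$--terms cancel when the bracket is put over a common denominator and recombined with the $1$. Once this closed form is secured, both the reality of $F$ on the unit circle and the location of the maximum at $u=1$ are immediate, and the value of $\|T_{a,q}\|$ follows at once.
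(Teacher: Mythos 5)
Your proof is correct and follows essentially the same route as the paper: both reduce the claim to $\|T_{a,q}\|=\max_{\rel z=1/2}|F(z)|$ via the analytic functional calculus and the spectral mapping theorem, and then identify $F$ on that line with the Poisson--kernel expression $\frac{1-a^2}{1-2a\sqrt{q-1}\cos\th+a^2(q-1)}$, maximized at $\th=0$. The only difference is that you carry out explicitly (and correctly) the algebraic simplification of \eqref{man} that the paper states without detail.
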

\begin{proof}
If $a<\frac1{\sqrt{q-1}}$, $f$ is analytic in a neighborhood of the spectrum of $\m_1$ and $T_{a,q}=f(\m_1)$. Thanks to the Spectral Mapping Theorem (cf. \cite{T}, Proposition I.2.8) and the fact that $\m_1$ is selfadjoint,
$$
\|T_{a,q}\|=\spr(T_{a,q})
=\max_{w\in\left[-2\sqrt{q-1}/q,2\sqrt{q-1}/q\right]}\left|f(w)\right|
=\max_{z\in\left\{z\in\bc : \rel(z)=1/2\right\}}\left|F(z)\right|\,.
$$
Now,
$$
F(1/2+\imath\th/\ln(q-1))=\frac{1-a^2}{1-2a\sqrt{q-1}\cos\th+a^2(q-1)}
$$
which is maximum whenever $\th=0$ and the assertion follows.
\end{proof}
\begin{Prop}
\label{norm}
For each fixed $q$ there exists a $Q(q)>q$ such that $\|A_{\bg^{Q,q}}\|>\|A_{\bg^{Q}}\|$ provided that
$q\leq Q\leq Q(q)$. Such an upper bound is given by
\begin{equation}
\label{ub}
Q(q)=\bigg[\left(2\sqrt{q-1}+1+\sqrt{4\sqrt{q-1}+1}\right)^2\bigg/4\bigg]+1\,.
\end{equation}
\end{Prop}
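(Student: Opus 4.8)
The plan is to convert the secular equation \eqref{sc} into a statement purely about the convolution operator $T_{a,q}$ and then to read the norm condition off its boundary behaviour. First I would record, from (7.6) of \cite{MW} together with $R_{A_{\bg^{Q}}}(\l)=\frac1\l\sum_{k\geq0}A_{\bg^{Q}}^{k}\l^{-k}$, that the resolvent has matrix elements
\begin{equation*}
\big[R_{A_{\bg^{Q}}}(\l)\big]_{x,y}=\frac1\l W_{x,y}\Big(\frac1\l\Big)=\frac{a(\l)^{d(x,y)}}{\m(\l)}\,,
\end{equation*}
with $a(\l)$, $\m(\l)$ as in \eqref{000}, \eqref{333} and $W$ as in \eqref{ww}. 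Since $S\sim\bg^{q}$ is a connected subtree, the ambient distance $d(x,y)$ agrees with the intrinsic one for $x,y\in S$, so $P_{\ell^2(\bg^{q})}R_{A_{\bg^{Q}}}(\l)P_{\ell^2(\bg^{q})}=\m(\l)^{-1}T_{a(\l),q}$ and \eqref{sc} becomes $\|T_{a(\l),q}\|=\m(\l)$. By Theorem \ref{es} and Lemma \ref{00}, the strict increase $\|A_{\bg^{Q,q}}\|>\|A_{\bg^{Q}}\|$ holds if and only if the decreasing function $f(\l)=\m(\l)^{-1}\|T_{a(\l),q}\|$ exceeds $1$ in the limit $\l\downarrow\|A_{\bg^{Q}}\|=2\sqrt{Q-1}$.

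Next I would evaluate this limit through Proposition \ref{fc}, noting that for $\l\geq2\sqrt{Q-1}$ the monotonicity of $a(\cdot)$ gives $a(\l)\leq a(2\sqrt{Q-1})=1/\sqrt{Q-1}\leq1/\sqrt{q-1}$, so the formula $\|T_{a,q}\|=(1-a^2)/(1-a\sqrt{q-1})^2$ applies throughout (the endpoint value being finite exactly when $q<Q$, while for $q=Q$ the norm blows up and the inequality is automatic). Substituting $a(2\sqrt{Q-1})=1/\sqrt{Q-1}$ and $\m(2\sqrt{Q-1})=(Q-2)/\sqrt{Q-1}$ into $f$ gives, after simplification,
\begin{equation*}
\lim_{\l\downarrow2\sqrt{Q-1}}f(\l)=\frac{\sqrt{Q-1}}{\big(\sqrt{Q-1}-\sqrt{q-1}\big)^2}\,.
\end{equation*}
Writing $u:=\sqrt{Q-1}$ and $c:=\sqrt{q-1}$, the condition $f>1$ is equivalent to $u>(u-c)^2$, that is to the quadratic inequality $u^2-(2c+1)u+c^2<0$.

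Finally I would solve this quadratic: its roots are $u_{\pm}=\big(2c+1\pm\sqrt{4c+1}\big)/2$, so the inequality holds precisely for $u_-<u<u_+$. Since $c>0$ forces $u_-<c$, the lower root imposes no constraint beyond $Q\geq q$ (equivalently $u\geq c$), and the operative condition is $u<u_+$, i.e. $Q-1<u_+^2=\big(2\sqrt{q-1}+1+\sqrt{4\sqrt{q-1}+1}\big)^2/4$. As $Q$ is an integer, this reads $Q\leq Q(q)$ with $Q(q)$ as in \eqref{ub}, and the strict monotonicity of $g(u)=u/(u-c)^2$ (decreasing from $+\infty$ to $0$ on $(c,\infty)$) guarantees both that the entire block $q\leq Q\leq Q(q)$ qualifies and that $Q(q)>q$. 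I expect the only delicate points to be the justification that the continuous extension of $f$ to the boundary computes the relevant one-sided limit — which rests on the validity range $a<1/\sqrt{q-1}$ of Proposition \ref{fc} and on the separate $q=Q$ behaviour — together with the bookkeeping of the integer part in \eqref{ub}, both controlled by the monotonicity of $a(\cdot)$ and of $g$.
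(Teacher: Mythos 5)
Your argument is correct and follows essentially the same route as the paper's proof: both reduce the secular equation \eqref{sc} via Proposition \ref{fc} to $\|T_{a(\l),q}\|=\m(\l)$, use the opposite monotonicities of the two sides in $\l$ to evaluate at the boundary $\l=2\sqrt{Q-1}$ where $a=1/\sqrt{Q-1}$ and $\m=(Q-2)/\sqrt{Q-1}$, and then solve the resulting quadratic $(u-c)^2=u$ in $u=\sqrt{Q-1}$, $c=\sqrt{q-1}$ (the paper's $x$, $b$). Your version is slightly more explicit about the identification $P_{\ell^2(\bg^{q})}R_{A_{\bg^{Q}}}(\l)P_{\ell^2(\bg^{q})}=\m(\l)^{-1}T_{a(\l),q}$ and about discarding the lower root, but the substance is identical.
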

\begin{proof}
We start by noticing that $a(\l)\equiv a(\l,Q)$ decreases as $\l$ increases. In addition
$a\left(\|A_{\bg^{Q}}\|\right)=\frac1{\sqrt{Q-1}}$. This means that $a(\l)\sqrt{q-1}<1$ for each
$\l\geq\|A_{\bg^{Q}}\|$. By taking into account Proposition \ref{fc} and Theorem \ref{es}, the secular equation \eqref{sc} for the adjacency of $\bg^{Q,q}$ becomes
\begin{equation}
\label{hss}
\frac{1-a^2}{(1-a\sqrt{q-1})^2}=\m
\end{equation}
where $a$ and $\m$, given by \eqref{000}, \eqref{333} respectively, are functions of $\l$ and $Q$.
Thanks to the fact that in \eqref{hss} the l.h.s. is decreasing and the r.h.s. is increasing, whenever
$\l\geq\|A_{\bg^{Q}}\|$ increases, in order to determine $Q(q)$ it is enough to solve \eqref{hss} w.r.t. $Q$ after putting $a=a\left(\|A_{\bg^{Q}}\|\right)$ and
$\m=\m\left(\|A_{\bg^{Q}}\|\right)=\frac{Q-2}{\sqrt{Q-1}}$. By defining $x:=\sqrt{Q-1}$, $b:=\sqrt{q-1}$,
\eqref{hss} becomes
$$
\frac{1-\frac1{x^2}}{\left(1-\frac{b}{x}\right)^2}=\frac{x^2-1}{x}\,,
$$
which has as the unique acceptable solution
$$
x=\frac{2b+1+\sqrt{4b+1}}2\,.
$$
\end{proof}
We have proven the following fact. Fix $q\geq2$, then there exists a unique $Q(q)$ given by
\eqref{ub} such that $q\leq Q\leq Q(q)$ implies $\|A_{\bg^Q}\|<\|A_{\bg^{Q,q}}\|=:\l_*$. As before, if
$Q>Q(q)$, the perturbation is too small to change the norm of the adjacency
operator, and then to create an hidden spectrum zone.

We pass on to the study of the Perron Frobenius eigenvector for
$A_{\bg^{Q,q}}$ when $2<q\leq Q$. As in the previous sections, we
consider the subgraph $\bg^q_n\subset\bg^{Q}$ made of the finite
volume subtree of order $q$ centered on the root
$0\in\bg^{q}\subset\bg^{Q}$. As the adjacency of the graph $Z_n$
(cf. Fig. \ref{Fig8}) obtained by perturbing $\bg^{Q}$ with
self loops along $\bg^q_n$ is recurrent, it has a unique
Perron Frobenius eigenvector $v_n$, normalized such that
$v_n(0)=1$, where $0$ is the common root for all the graphs under
consideration.

The main properties of the Perron Frobenius eigenvector are summarized in the following
\begin{Thm}
\label{pftr}
Suppose $q\leq Q\leq Q(q)$. With the above notations, $v_n$ converges pointwise to a weight
$v$ which is a Perron--Frobenius eigenvector for
$A_{\bg^{Q,q}}$. It is given by
$$
v(x)=a(\l_*,Q)^{d(x,\bg^{q})}\f_{1/2}(y(x))\,,
$$
where, $y(x)\in\bg^{q}$ is described in Lemma \ref{loo},
$a(\l_*,Q)$ is given by \eqref{000}, $\l_*$ is the unique solution of \eqref{hss}, and finally
$\f_{1/2}$ is the function on the tree $\bg^{q}$ given in Theorem 3.2.2 of \cite{FP}, by
$$
\f_{1/2}(x)=\left(1+\frac{q-2}{q}d(x,0)\right)(q-1)^{-\frac{d(x,0)}2}\,.
$$
\end{Thm}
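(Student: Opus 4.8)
The plan is to reduce everything to finite volume and then to a single scalar recursion along the radial direction of $\bg^q$, exactly as in the proofs of Theorem \ref{ress} and Theorem \ref{czz}. For $n$ large enough that $\l_n:=\|A_{Z_n}\|>\|A_{\bg^{Q}}\|$ (such $n$ exist by Lemma \ref{00} and Theorem \ref{es}, since $\l_*>\|A_{\bg^{Q}}\|$ by Proposition \ref{norm}), Lemma \ref{fgiii} guarantees that $v_n$ exists and, together with \eqref{zeta}, \eqref{ww} and Lemma \ref{loo}, yields the factorized form
\begin{equation*}
v_n(x)=a(\l_n,Q)^{d(x,\bg^q_n)}\,w_n(y_n(x)),
\end{equation*}
where $w_n:=P_{\ell^2(\bg^q_n)}v_n$ is the unique strictly positive Perron Frobenius eigenvector of $P_{\ell^2(\bg^q_n)}T_{a(\l_n),q}P_{\ell^2(\bg^q_n)}$ for the eigenvalue $\m(\l_n)$, normalized by $w_n(0)=1$. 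By Lemma \ref{00} and Theorem \ref{es} one has $\l_n\uparrow\l_*$, hence $a(\l_n,Q)\downarrow a(\l_*,Q)$ and $\m(\l_n)\uparrow\m(\l_*)=\|T_{a(\l_*),q}\|$ by Proposition \ref{fc}. Since $d(x,\bg^q_n)\to d(x,\bg^q)$ and $y_n(x)=y(x)$ for $n$ large, the whole statement reduces to proving the pointwise convergence $w_n\to\f_{1/2}$ on $\bg^{q}$.

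First I would note that $w_n$ is radial, i.e. a function of $k:=d(\cdot,0)$ alone, because $P_{\ell^2(\bg^q_n)}T_{a(\l_n),q}P_{\ell^2(\bg^q_n)}$ is a strictly positive matrix commuting with the stabilizer of $0$ in $\mathrm{Aut}(\bg^{q})$. Writing the eigenvalue equation $A_{Z_n}v_n=\l_n v_n$ at an internal vertex $x\in\bg^q_n$ of radius $1\le k\le n-1$, and using the factorized form above to evaluate $v_n$ at the $Q-q$ neighbours of $x$ lying off $\bg^{q}$ (each at distance $1$ with foot $x$, so with value $a(\l_n,Q)w_n(k)$), I obtain the second order recursion
\begin{equation*}
(q-1)w_n(k+1)-c_n\,w_n(k)+w_n(k-1)=0,\qquad c_n:=\l_n-1-(Q-q)a(\l_n,Q),
\end{equation*}
together with the root relation $q\,w_n(1)=c_n$ coming from $x=0$. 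Because $q-1>0$ this is a genuine forward recursion: for every fixed $k$ the value $w_n(k)$ is determined by $c_n$ and the normalization $w_n(0)=1$ alone, the remaining boundary identity at $k=n$ being, after using $\l_n=(Q-1)a(\l_n,Q)+1/a(\l_n,Q)$, nothing but the secular equation \eqref{sc} that defines $\l_n$. Consequently $w_n(k)$ depends on $n$ only through $c_n$, and pointwise convergence of $w_n$ follows at once from the convergence of the single scalar $c_n$; this is the tree analogue of the triangular system \eqref{iiff}.

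It then remains to identify $\lim_n c_n$. Using the identity $(Q-1)a(\l_*,Q)^2-\l_* a(\l_*,Q)+1=0$ underlying \eqref{000} and the secular equation \eqref{hss}, which collapses to $(1-a(\l_*,Q)\sqrt{q-1})^2=a(\l_*,Q)$, a short computation gives
\begin{equation*}
\lim_n c_n=(q-1)a(\l_*,Q)+\frac1{a(\l_*,Q)}-1=2\sqrt{q-1}=q\,\g(1/2).
\end{equation*}
Thus the limiting recursion is the radial eigenvalue equation of the adjacency of $\bg^{q}$ at its norm $2\sqrt{q-1}$, i.e. the critical double root case, whose unique normalized solution is precisely $\f_{1/2}$; equivalently, $\f_{1/2}$ is the generalized Perron Frobenius eigenvector of $\m_1$ at the top $\g(1/2)$ of its spectrum, hence of $T_{a(\l_*),q}=f(\m_1)$. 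This yields $w_n\to\f_{1/2}$ and therefore $v_n\to v$ with the asserted formula.

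Finally, to see that the pointwise limit $v$ is a genuine Perron Frobenius eigenvector of $A_{\bg^{Q,q}}$, I would pass to the limit in $A_{Z_n}v_n=\l_n v_n$ one vertex at a time: for fixed $x$ and $n$ large the self loop pattern of $Z_n$ coincides with that of $\bg^{Q,q}$ on $x$ and its finitely many neighbours, so the finite, bounded degree sum $\sum_{z\sim x}v_n(z)+[x\in\bg^{q}]v_n(x)$ converges to the corresponding expression for $v$, giving $A_{\bg^{Q,q}}v=\l_* v$ with $v>0$ and $\l_*=\|A_{\bg^{Q,q}}\|$. I expect the main obstacle to be the two computational cores of the argument: the bookkeeping that converts the long range truncated convolution $P_{\ell^2(\bg^q_n)}T_{a(\l_n),q}P_{\ell^2(\bg^q_n)}$ into the local forward recursion above (this is where the off subtree exponential decay supplied by \eqref{ww} is essential), and the algebraic identity $\lim_n c_n=2\sqrt{q-1}$, which is exactly what forces the degenerate double root and hence the appearance of $\f_{1/2}$ rather than a non-degenerate exponential combination.
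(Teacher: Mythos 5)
Your proposal is correct and reaches the theorem by a genuinely different computational route. The paper works throughout with the truncated convolution operator $P_{\ell^2(\bg^q_n)}T_{a(\l_n),q}P_{\ell^2(\bg^q_n)}$: it first verifies separately, via the spherical functions $\f_z$ and the identities $\m_k*\f_{1/2}=\|\m_k\|\f_{1/2}$ together with monotone convergence, that $\f_{1/2}$ is a Perron--Frobenius weight for $T_{a,q}$, and then controls the finite volume eigenvectors through the substitution $\s_n(k)=(q-1)^ka(\l_n)^kw_n(k)$, which turns the long range eigenvalue equation into the size independent triangular system \eqref{iiffa}; the identification of the limit is a (self-described tedious) inductive verification that $\s(n)=(q-1)^na(\l_*)^n\f_{1/2}(n)$ solves the limiting system. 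You instead return to the local eigenvalue equation $A_{Z_n}v_n=\l_nv_n$ on the perturbed graph, use the factorized form of $v_n$ supplied by \eqref{zeta} and \eqref{ww} to eliminate the off-subtree neighbours, and obtain the three-term radial recursion with coefficient $c_n=\l_n-1-(Q-q)a(\l_n,Q)=(q-1)a(\l_n,Q)+a(\l_n,Q)^{-1}-1$; the secular equation \eqref{hss}, rewritten as $(1-a\sqrt{q-1})^2=a$ via the identity $\m=(1-a^2)/a$, forces $c_*=2\sqrt{q-1}$, the degenerate double-root case, whose unique solution under the two normalizations $w(0)=1$ and $qw(1)=c_*$ is exactly $\f_{1/2}$. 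Both arguments rest on the same scaffolding (the explicit resolvent kernel, radiality of $w_n$, monotone convergence of $\l_n$, $a(\l_n)$ and of the norms $\La_n$), and your reductions check out: the forward recursion makes $w_n(k)$ a fixed polynomial in the single scalar $c_n$ for $n\geq k$, so pointwise convergence is immediate, and the boundary relation at $k=n$ is indeed just the finite volume secular condition, which you never need. What your version buys is that $\f_{1/2}$ emerges from the double root rather than being guessed and verified; what the paper's version buys is direct spectral information on $T_{a,q}$ itself (norm and resolvent), which is reused in the rest of the section.
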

\begin{proof}
We have previously shown that $\|T_{a,q}\|=F(1/2)=f(\frac{2\sqrt{q-1}}{q})$, where
$F$ and $f$ are given in \eqref{man} and \eqref{man1} respectively, and finally
$\frac{2\sqrt{q-1}}{q}=\|\m_1\|$. In addition,
$$
(\m_1*\f_{1/2})(x)=\|\m_1\|\f_{1/2}(x)\,.
$$
As
$$
\|\m_n\|=\max_{z\in\left\{z\in\bc : \rel(z)=1/2\right\}}\left|\f_z(n)\right|\,,
$$
we compute
$$
\f_{1/2+\imath\th/\ln(q-1)}=\frac{(q-2)I_n(\th)\cos\th+q\cos n\th}{q(q-1)^{n/2}}\,,
$$
where
$$
I_n(\th)=\frac{\sin n\th}{\sin\th}
$$
if $\th\neq k\pi$, and $\pm n$ according to the parity of $k$ and $nk$, when $\th=k\pi$. Now, the $I_n$ satisfy the recursive equation
$$
I_0(\th)=0,\quad I_{n+1}(\th)=I_n(\th)\cos\th+\cos n\th\,.
$$
This means that $|I_n(\th)|$ attains its maximum when $\th=2k\pi$, which implies
$\|\m_n\|=\f_{1/2}(n)$ and
$$
(\m_n*\f_{1/2})(x)=\|\m_n\|\f_{1/2}(x)\,.
$$
Now, thanks to the Monotone Convergence Theorem, we get
\begin{align*}
\left(T_{a,q}\f_{1/2}\right)(n)
&=\f_{1/2}(n)+\frac{q}{q-1}\left\{\left[\sum_{k=1}^{+\infty}(a(q-1))^k\m_k\right]*\f_{1/2}\right\}(n)\\
&=\f_{1/2}(n)+\frac{q}{q-1}\sum_{k=1}^{+\infty}(a(q-1))^k(\m_k*\f_{1/2})(n)\\
&=\f_{1/2}(n)+\frac{q}{q-1}\sum_{k=1}^{+\infty}(a(q-1))^k\f_{1/2}(k)\f_{1/2}(n)\\
&=\left[1+\frac{q}{q-1}\sum_{k=1}^{+\infty}(a(q-1))^k\f_{1/2}(k)\right]\f_{1/2}(n)\\
&=F(1/2)\f_{1/2}(n)=\|T_{a,q}\|\f_{1/2}(n)\,.
\end{align*}
Namely, $\f_{1/2}$ is a (generalized) Perron Frobenius eigenvector
for $T_{a,q}$ as well.\footnote{The fact that $\f_{1/2}(d(x,0))$ is
a Perron Frobenius weight for $T_{a,q}$ automatically follows from
the second part of the proof. We decided to give a different proof
as it does not depend on the approximation procedure by
finite volume Perron Frobenius eigenvectors.}

In order to show that $v$ is attained as the pointwise limit of the sequence of the finite volume Perron Frobenius eigenvectors $v_n$ of the graphs $Z_n$, it is enough to show that $\f_{1/2}$ is the
pointwise limit of the Frobenius eigenvectors $w_n$ for
$P_{\ell^2(\bg^q_n)}T_{a(\l_n),q}P_{\ell^2(\bg^q_n)}$, normalized to $1$ at the root $0$
(and eventually extended at zero outside the ball of radius $n$). As usual $a(\l)\equiv a(\l,Q)$, and $\l_n=\|A_{Z_n}\|$.

By symmetry, all the $w_n$ are radial functions. Thus, after summing
up the "angular part", we reduces the matter to a situation similar
to that in Theorem \ref{czz} involving a positivity preserving
operator acting on the Hilbert space $L^2(\bn,\psi\di\n)$ made of
the $\ell^2$--radial functions on $\bg^q$, where $\n$ is the
counting measure, and the density $\psi(n)=|\G_n|$. Namely, we
suppose
$\La_n:=\|P_{\ell^2(\bg^q_n)}T_{a(\l_n),q}P_{\ell^2(\bg^q_n)}\|$
fixed throughout the computation at the step $n$. Define for
$k=0,1,\dots,n$, $n\in\bn$,
\begin{equation}
\label{dsigm}
\s_n(k):=(q-1)^ka(\l_n)^kw_n(k)\,.
\end{equation}
As before  (cf. Lemma \ref{00}),
$$
\La_n\uparrow\La_*:=\|T_{a(\l_*),q}\|=\frac{1-a(\l_*)^2}{(1-a(\l_*)\sqrt{q-1})^2}\,,
$$
thanks to the fact that $\m(\l_n)\uparrow\m(\l_*)$.
In addition, we have also $a(\l_n)\downarrow a(\l_*)$, where $\l_*$ is the unique solution of the secular equation \eqref{sc} for the situation under consideration.
Put $\S_N:=\frac{(q-1)\La_N+1}q$,
\begin{align*}
&\d_0(a):=1\,,\quad \d_1(a):=1+(q-1)a^2\,,\\
&\d_n(a):=1+(q-2)\sum_{l=1}^{n-1}(q-1)^{l-1}a^{2l}+(q-1)^na^{2n}\,,n>0\,.
\end{align*}
By taking into account \eqref{dsigm},
after some tedious computations we can see that the solution for the $\s_N(n)$,
$0\leq n\leq N$, $N\in\bn$ is given by
\begin{align}
\label{iiffa}
\s_N(n)=&\frac1{\La_N}\bigg\{\sum_{m=0}^{n-1}
\left[(q-1)^{n-m}a(\l_N)^{2(n-m)}\d_m(a(\l_N))-\d_n(a(\l_N))\right]\s_N(m)\nn\\
+&\d_n(a(\l_N))\S_N\bigg\}\,.
\end{align}
Namely, the form of the system defining the $\s_n$ in terms 
of $\La_n$ is triangular and independent on the size (i.e. on
$n\in\bn$). It follows from the previous claims , thanks to the fact
that $a(\l_n)\to a(\l_*)$ and $\La_n\to\|T_{a(\l_*),q}\|$, that
$\s_n(k)$ converges pointwise in $k$ when $n\to\infty$. The proof
will be complete if we show that \eqref{iiffa} is satisfied for the
sequence $\{\s(n)\}$, with $\La=\|T_{a(\l_*),q}\|$ and
$\s(n)=(q-1)^na(\l_*)^n\f_{1/2}(n)$. To this end, after denoting as
usual $a=a(\l_*)$, we apply the inductive hypothesis and
\eqref{iiffa} becomes
\begin{equation}
\label{iiffa1}
\La(\s(n+1)-\xi^2\s(n))=(1-a^2)(\S-R_n)\,,
\end{equation}
where $\xi:=a\sqrt{q-1}$, and
$R_n:=\sum_{k=0}^n\s(k)$.
By inserting in \eqref{iiffa1},
\begin{align*}
&R_n=\frac{1-\xi^{n+1}}{1-\xi}+\xi\frac{(q-2)[1-\xi^{n+1}-(n+1)\xi^n(1-\xi)]}{q(1-\xi)^2}\,,\\
&\La=\frac{1-a^2}{(1-\xi)^2}\,,\qquad\S=\frac{(q-1)(1-a^2)}{q(1-\xi)^2}+\frac1q\,,
\end{align*}
we get that it becomes an identity and the proof follows.
\end{proof}
Notice that the above proof works even in the case when $q=2$. Namely, we get an alternative proof of the fact that the finite volume Perron Frobenius eigenvectors of $A_{\bg^{Q,2}}$ converge pointwise to \eqref{pfz} which is the unique Perron Frobenius generalized eigenvector as
$A_{\bg^{Q,2}}$ is recurrent.

We now move on to study the resolvent of $T_{a,q}$ for $q\leq Q\leq
Q(q)$ and $\l>\|A_{\bg^{Q,q}}\|$. It has the form
$$
R_{T_{a,q}}(\m)=\frac1{2\pi\imath}\oint\frac{R_{\m_1}(w)}{\m-f(w)}\di w\,,
$$
where $f$ is the function given in \eqref{man1}, and the integral is
over a small ellipse, oriented counterclockwise around the spectrum
of $\m_1$. By doing a standard change of variable, we get
$$
R_{T_{a,q}}(\m)=\frac1{2\pi\imath}\int_{\ell_{\eps}}\frac{R_{\m_1}(\g(z))}{\m-F(z)}\g'(z)\di z\,,
$$
where $F$ is given in \eqref{man} and
$\ell_{\eps}=\left\{z\in\bc : \rel(z)=1/2+\eps\,,0\leq\im(z)\leq2\pi/\ln(q-1)\right\}$ for all the sufficiently small $\eps>0$.
By taking into account the computation of $R_{\m_1}(\g(z))$ given in Theorem 3.3.3 of \cite{FP} and the derivative $\g'(z)$, we get
\begin{equation}
\label{man2}
\big\langle R_{A_{\bg^{Q,q}}}(\l)\d_0,\d_0\big\rangle
=\frac{\ln(q-1)}{2\pi\imath}\int_{\ell_{\eps}}\frac{\big[(q-1)^z-(q-1)^{1-z}\big] F(z)}
{\big[(q-1)^z-(q-1)^{-z}\big](\m-F(z))}\di z\,,
\end{equation}
where $a$ (appearing in the definition of $F(z)$) and $\m$ depend on
$\l$ and $Q$. Now, in order to have a more manageable formula, we
introduce a new variable by putting $\z:=(q-1)^z$ in \eqref{man2}.
This leads to
\begin{Lemma}
If $q\leq Q\leq Q(q)$ and $\l>\|A_{\bg^{Q,q}}\|$, we have for the
the following representation,
\begin{align}
\label{man3}
\big\langle &R_{A_{\bg^{Q,q}}}(\l)\d_0,\d_0\big\rangle\\
=\frac{a^2-1}{2\pi\imath}&\oint_{C_{\sqrt{(q-1)}}}\frac{[z^2-(q-1)]\di z}
{(z^2-1)\{a\m z^2-[(1+a^2(q-1))\m-(1-a^2)]z+a(q-1)\m\}}\nn\,,
\end{align}
where $a$ and $\m$ are function of $\l$ and $Q$, the integral is on
the circle $C_{\sqrt{(q-1)}}$  of radius $\sqrt{q-1}$, centered at
the origin and  oriented counterclockwise.
\end{Lemma}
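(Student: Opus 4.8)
The plan is to obtain \eqref{man3} from the representation \eqref{man2} by carrying out the announced change of variable $\z:=(q-1)^z$ and then deforming the contour. First I would record the elementary effects of the substitution. Since $\z=e^{z\ln(q-1)}$, one has $\di z=\di\z/(\z\ln(q-1))$, so the prefactor $\ln(q-1)$ in \eqref{man2} cancels; moreover $(q-1)^{1-z}=(q-1)/\z$ and $(q-1)^{-z}=1/\z$. The integrand of \eqref{man2} depends on $z$ only through $(q-1)^{\pm z}$, hence it is a meromorphic function of $\z$ alone and is periodic in $z$ with period $2\pi\imath/\ln(q-1)$. Consequently the vertical segment $\ell_\eps$, which has height exactly one period at $\rel(z)=\frac12+\eps$, maps to the circle $|\z|=(q-1)^{1/2+\eps}$ traversed once counterclockwise as $\im(z)$ increases, so the line integral genuinely becomes an $\oint$.

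Next I would simplify the two rational factors. The prefactor collapses at once to
$$
\frac{(q-1)^z-(q-1)^{1-z}}{(q-1)^z-(q-1)^{-z}}=\frac{\z^2-(q-1)}{\z^2-1}\,.
$$
The substantive step is the reduction of $F(z)$ from \eqref{man}. Substituting $(q-1)^z=\z$ and clearing denominators, the bracketed term acquires numerator $-\z^4+aq\,\z^3-aq(q-1)\,\z+(q-1)^2$, which factors as $\big(\z^2-(q-1)\big)\big(aq\,\z-\z^2-(q-1)\big)$; the factor $\z^2-(q-1)$ then cancels the denominator $(q-1)-\z^2$ up to sign, and upon collecting terms the numerator collapses — using $1+a^2\big((q-1)-q\big)=1-a^2$ — to the compact form
$$
F=\frac{(1-a^2)\z}{-a\z^2+\big(1+a^2(q-1)\big)\z-a(q-1)}\,.
$$
Writing $D$ for the displayed denominator, one gets $F/(\m-F)=(1-a^2)\z/\big(\m D-(1-a^2)\z\big)$, and multiplying numerator and denominator by $-1$ produces exactly $(a^2-1)\z$ over the quadratic appearing in \eqref{man3}. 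Assembling the prefactor, this expression for $F/(\m-F)$, and $\di z=\di\z/(\z\ln(q-1))$, the explicit $\z$ in the numerator cancels the $\z$ from the prefactor's denominator, yielding the integrand of \eqref{man3} over the circle $|\z|=(q-1)^{1/2+\eps}$.

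Finally I would deform this circle to $C_{\sqrt{q-1}}$. The integrand is rational in $\z$ with poles only at $\z=\pm1$ and at the two roots of $a\m\z^2-[(1+a^2(q-1))\m-(1-a^2)]\z+a(q-1)\m$, whose product equals $q-1$. For $\l>\l_*$ the discriminant is strictly positive (it vanishes precisely at $\l=\l_*$, where the condition of a double root at $\sqrt{q-1}$ is exactly the secular equation \eqref{hss}), so the roots are real and positive and, having product $q-1$, straddle the circle as $\z_-<\sqrt{q-1}<\z_+$; moreover $\pm1$ lie strictly inside $C_{\sqrt{q-1}}$ since $q>2$. Hence for all $\eps$ small enough (namely $(q-1)^{1/2+\eps}<\z_+$) the thin annulus $\sqrt{q-1}\le|\z|\le(q-1)^{1/2+\eps}$ contains no pole, and Cauchy's theorem lets me replace the contour by $C_{\sqrt{q-1}}$ (equivalently, one lets $\eps\downarrow0$). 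Renaming the dummy variable $\z$ back to $z$ gives \eqref{man3}. The one genuinely delicate point is the algebraic reduction of $F$: the whole simplification hinges on the factorization of the quartic numerator and on the identity $1+a^2((q-1)-q)=1-a^2$, without which the integrand would not reduce to the stated quadratic denominator; the realness of the two roots (ruling out a complex-conjugate pair, which would sit on $C_{\sqrt{q-1}}$) is the point that makes the final contour deformation legitimate, and it follows, as in Theorem \ref{ress}, from comparing \eqref{hss} with the sign of the discriminant for $\l>\l_*$.
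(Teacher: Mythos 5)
Your proposal is correct and follows essentially the same route as the paper: the substitution $\z=(q-1)^z$ turning $\ell_\eps$ into a circle of radius $(q-1)^{1/2+\eps}$, the algebraic collapse of the integrand of \eqref{man2} to that of \eqref{man3}, and the final deformation to $C_{\sqrt{q-1}}$ justified by locating the four simple poles at $\pm1$ and $z_\pm$ with $z_-<\sqrt{q-1}<z_+$ via the positivity of $\D$ for $\l>\l_*$. The only difference is that you spell out the factorization of the quartic numerator and the reduction of $F$, which the paper leaves implicit.
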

\begin{proof}
After the change of the variable previously explained, the integrand in \eqref{man2} becomes proportional to that in \eqref{man3}, and $\ell_{\eps}$ becomes a circle $C_{\sqrt{q-1}+\d}$
centered in the origin whose radius is $\sqrt{q-1}+\d$ for any sufficiently small $0<\d< d$, with  $d>0$ depending on $\l$, for $\l>\l_*$ close to $\l_*$ (or equivalently as $\m$ is close to
$\frac{1-a^2}{(1-a\sqrt{q-1})^2}$).
As explained in Section \ref{z}, it is straightforward to show that
$\l>\l_*$ corresponds to $\m>\frac{1-a^2}{(1-a\sqrt{q-1})^2}$. In addition, $\l>\l_*$ implies
$$
\D:=[(1+a^2(q-1))\m-(1-a^2)]^2-4\m^2a^2(q-1)>0\,.
$$
The last is zero if $\l=\l_*$, or equivalently if
$\m=\frac{1-a^2}{(1-a\sqrt{q-1})^2}$. This means that the four
simple poles of the integrand in \eqref{man3} are precisely $\pm1$
and $z_{\pm}$ with $z_-<\sqrt{q-1}$ and $\sqrt{q-1}+d<z_+$, for some
$d>0$. Thus, we can replace in \eqref{man3}, the circle
$C_{\sqrt{q-1}+\d}$ directly with $C_{\sqrt{q-1}}$.
\end{proof}
We are ready to establish the main properties of the resolvent of $A_{\bg^{Q,q}}$ which are summarized in the following
\begin{Thm}
Suppose that $q\leq Q\leq Q(q)$. If $\l>\|A_{\bg^{Q,q}}\|$, we have
\begin{equation}
\label{resss3}
R_{A_{\bg^{Q,q}}}(\l)=R_{A_{\bg^{Q}}}(\l)\bigg(\idd_{\ell^2(\bg^{q})} +P_{\ell^2(\bg^{q})}
\bigg(\idd_{\ell^2(\bg^{Q})} -\frac1{\l}W\bigg(\frac1{\l}\bigg)\bigg)^{-1}P_{\ell^2(\bg^{q})}
R_{A_{\bg^{Q}}}(\l)\bigg)\,,
\end{equation}
where $W$ is the operator acting on $\bg^{q}$ given by \eqref{ww}. In addition, $\bg^{Q,q}$ is transient.
\end{Thm}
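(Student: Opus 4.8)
The plan is to establish the resolvent formula exactly as in the first part of Theorem \ref{ress}, and then to settle transience by a residue computation on \eqref{man3}. For \eqref{resss3}: by Proposition \ref{norm} the hypothesis $q\le Q\le Q(q)$ guarantees that \eqref{sc} has a unique solution $\l_*=\|A_{\bg^{Q,q}}\|>\|A_{\bg^{Q}}\|$. The perturbation adding one self loop on each vertex of $S\sim\bg^{q}$ is the diagonal operator $D=P_{\ell^2(\bg^{q})}$, so the operator of Lemma \ref{fgiii} reads $S(\l)=P_{\ell^2(\bg^{q})}R_{A_{\bg^{Q}}}(\l)P_{\ell^2(\bg^{q})}$, and Theorem \ref{es} together with Lemma \ref{00} makes $\idd_{\ell^2(\bg^{q})}-S(\l)$ invertible for $\l>\l_*$. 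Substituting into \eqref{koro} and expressing the matrix elements of $R_{A_{\bg^{Q}}}(\l)$ through $\frac1{\l}W_{x,y}(\frac1{\l})$ by \eqref{ww} yields \eqref{resss3} immediately.

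For transience I would start from the contour representation \eqref{man3} and compute $\lim_{\l\downarrow\l_*}\langle R_{A_{\bg^{Q,q}}}(\l)\d_0,\d_0\rangle$ by residues. Writing the quadratic factor as $P(z)=a\m(z-z_+)(z-z_-)$, Vieta's formula gives the decisive relation $z_+z_-=q-1$. Since $q>2$ makes $\sqrt{q-1}>1$, the poles enclosed by $C_{\sqrt{q-1}}$ are $z=\pm1$ and $z=z_-$ (recall $z_-<\sqrt{q-1}<z_+$). The residues at $\pm1$ stay bounded as $\l\downarrow\l_*$, because $z_\pm\to\sqrt{q-1}\ne1$, so $P(\pm1)$ stays away from $0$.

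The crux is the residue at $z_-$, and this is where I expect the real work. A priori it is singular, since $1/P'(z_-)=1/\big(a\m(z_--z_+)\big)$ diverges like $1/\sqrt{\D}$ as $\D\downarrow0$. But the numerator of \eqref{man3} cancels this: using $z_+z_-=q-1$,
$$
z_-^2-(q-1)=z_-^2-z_+z_-=z_-(z_--z_+)\,,
$$
whence
$$
\mathop{\rm Res}_{z=z_-}\frac{z^2-(q-1)}{(z^2-1)P(z)}=\frac{z_-}{(z_-^2-1)\,a\m}\longrightarrow\frac{\sqrt{q-1}}{(q-2)\,a\m}
$$
as $\l\downarrow\l_*$, which is finite precisely because $q>2$. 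Summing the three (finite) residues and multiplying by $a^2-1$ shows that $\langle R_{A_{\bg^{Q,q}}}(\l)\d_0,\d_0\rangle$ has a finite limit, i.e. $\bg^{Q,q}$ is transient.

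The main obstacle is to spot this cancellation and to recognize that it is exactly the mechanism separating the transient bases $\bg^{q}$, $q>2$, from the recurrent case $\bg^{2}\sim\bz$ of Theorem \ref{ress}: when $q=2$ the numerator factor degenerates to $z^2-1$, cancelling the poles at $\pm1$ instead of taming the one at $z_-$, and leaving the divergent residue $\propto1/\sqrt{\D}$ responsible for recurrence there. Thus the whole dichotomy is encoded in whether the factor $z^2-(q-1)$ coincides with $z^2-1$.
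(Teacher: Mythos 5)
Your proposal is correct and follows essentially the same route as the paper: the resolvent formula comes from Lemma \ref{fgiii}, Theorem \ref{es} and \eqref{koro} exactly as in Theorem \ref{ress}, and transience is settled by applying the Residue Theorem to \eqref{man3} with poles $\pm1$ and $z_-$ inside $C_{\sqrt{q-1}}$. Your Vieta identity $z_+z_-=q-1$, giving $z_-^2-(q-1)=z_-(z_--z_+)$, is just a cleaner packaging of the paper's explicit computation that the potentially divergent ratio $\frac{\sqrt{q-1}-z_-}{z_+-z_-}$ stays bounded as $\D\downarrow0$, and both yield the same finite limit.
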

\begin{proof}
As explained in the analogous previous results, we have to only
prove the transience by \eqref{resss3}. This leads to
\eqref{man3}, or equivalently
$$
\big\langle R_{A_{\bg^{Q,q}}}(\l)\d_0,\d_0\big\rangle
=\frac{a^2-1}{a\m}\frac1{2\pi\imath}\oint_{C_{\sqrt{(q-1)}}}\frac{[z^2-(q-1)]\di z}
{(z-1)(z+1)(z-z_-)(z-z_+)}\,.
$$
This can be computed by the Residue Theorem and, by taking into account that
$z_+\downarrow\sqrt{q-1}$, $z_-\uparrow\sqrt{q-1}$ as $\l\downarrow\|A_{\bg^{Q,q}}\|$ we conclude that the unique term which might be divergent is that containing
$$
\frac{\sqrt{q-1}-z_-}{z_+-z_-}=\frac12\left[1-\frac{[(1+a^2(q-1))\m-(1-a^2)]-2a\m\sqrt{q-1}}
{\sqrt{[(1+a^2(q-1))\m-(1-a^2)]^2-4a^2\m^2(q-1)}}\right]\,.
$$
We obtain, by taking into account that $a=a(\l,Q)$, $\m=\m(\l,Q)$,
\begin{align*}
&\lim_{\l\downarrow\l_*}\big\langle R_{A_{\bg^{Q,q}}}(\l)\d_0,\d_0\big\rangle
=\frac{(1-a(\l_*,Q)\sqrt{q-1})^2\sqrt{q-1}}{a(\l_*,Q)(q-2)}\\
\times&\left\{1+
\lim_{\l\downarrow\l_*}
\sqrt{\frac{[(1+a(\l,Q)^2(q-1))\m(\l,Q)-(1-a(\l,Q)^2)]-2a(\l,Q)\m(\l,Q)\sqrt{q-1}}
{[(1+a(\l,Q)^2(q-1))\m(\l,Q)-(1-a(\l,Q)^2)]+2a(\l,Q)\m(\l,Q)\sqrt{q-1}}}\right\}\\
=&\frac{(1-a(\l_*,Q)\sqrt{q-1})^2\sqrt{q-1}}{a(\l_*,Q)(q-2)}
\end{align*}
as $\m(\l,Q)\to\frac{1-a(\l_*,Q)^2}{(1-a(\l_*,Q)\sqrt{q-1})^2}$ when $\l\to\l_*$.
\end{proof}

\medskip\par\noindent{\it Acknowledgements.}
The author would like to thank L. Accardi for the invitation to the
7th Volterra--CIRM International School: "Quantum Probability and
Spectral Analysis on Large Graphs", which  was extremely inspiring
for the mathematical investigation of the Bose Einstein condensation
on non homogeneous networks. He is grateful to M. Picardello for
several helpful discussions, and for introducing and explaining most
of the results of the Section 3 of the book \cite{FP} which was
crucial for the present work. He is also grateful to M.
Cirillo for some explanation concerning the physical and
experimental meaning of the pure hopping model. Finally, he
acknowledges the warm hospitality extended by the International
Islamic University Malaysia during the period January--April 2009,
when the present work started.


\end{document}